%2017Aug25-Jiang
\documentclass[12pt]{article}
\usepackage{mathrsfs}
\usepackage{amsthm}
\usepackage{amssymb}
\usepackage{latexsym}
\usepackage{amsmath,amsfonts}
\usepackage{mathrsfs}
\usepackage{cases}
\usepackage{latexsym,bm}
\usepackage{indentfirst}
\usepackage{color}
\usepackage{ifpdf}
\usepackage{graphicx}
\usepackage{psfrag}
\usepackage[
pdfauthor={CSY},
pdftitle={Hamiltonicity of edge-chromatic critical graphs},
pdfstartview=XYZ,
bookmarks=true,
colorlinks=true,
linkcolor=blue,
urlcolor=blue,
citecolor=blue,
bookmarks=true,
linktocpage=true,
hyperindex=true
]{hyperref}

\topmargin -8mm \textwidth 16cm \textheight 22cm \lineskip 0.2cm
\usepackage{graphicx}
\usepackage{color}

\title{ Hamiltonicity of edge-chromatic critical graphs }
\author{Yan Cao$^a$,%\thanks{Email: ycao17@gsu.edu},
 Guantao Chen$^a$, %\thanks{Email: gchen@gsu.edu},
 Suyun Jiang$^b$\thanks{Partially supported by NSFC of China (Nos. 11671232, 11571096, 61373019, 11671186).}, %\thanks{Email: jiang.suyun@163.com},
 Huiqing Liu$^{c*}$, %\thanks{Email: hql\underline\;2008@163.com},
 Fuliang Lu$^{d*}$%\thanks{Email: flianglu@163.com}
 \unskip\\[.5em]
{\small $^a$  Department of Mathematics and Statistics, Georgia State University, Atlanta, GA 30303}\\
{\small $^b$  School of Mathematics, Shandong University, Jinan, Shandong 250100}\\
{\small $^c$  Hubei Key Laboratory of Applied Mathematics, Faculty of Mathematics and Statistic,}\\
{\small Hubei University, Wuhan, Hubei 430062}\\
{\small $^d$ School of  Mathematics and Statistics, Linyi University, Linyi, Shandong 276000}\\
}
\date{}

\newtheorem{lem}{Lemma}
\newtheorem{thm}{Theorem}

\newtheorem{conj}{Conjecture}

\newtheorem{cla}{Claim}[section]

\newcommand{\D}{\Delta}

\newcommand{\phiv}{\varphi}
\newcommand{\phibar}{\bar{\varphi}}

\begin{document}
%\footnotetext{$\ast$This work is supported by NFSC (NO.10831001).
%\\\mbox{}\hspace{18pt}$\ast\ast$Corresponding author,
%E-mail address: }\\
\newcommand{\udots}{\mathinner{\mskip1mu\raise1pt\vbox{\kern7pt\hbox{.}}
\mskip2mu\raise4pt\hbox{.}\mskip2mu\raise7pt\hbox{.}\mskip1mu}}
\maketitle

\begin{abstract}
Given a graph  $G$, denote by  $\D$ and $\chi^\prime$  the maximum degree and
the  chromatic index of $G$, respectively.   A simple graph $G$ is called  {\it edge-$\Delta$-critical}   if $\chi^\prime(G)=\Delta+1$  and $\chi^\prime(H)\le\Delta$  for every proper subgraph $H$ of $G$.
We proved that every edge chromatic critical graph of order $n$ with maximum degree at least $\frac{2n}{3}+12$ is Hamiltonian.
\\
\par {\small {\it Keywords: }\ \  edge-$k$-coloring; edge-critical graphs; Hamiltonicity}
\end{abstract}
\vskip 0.2in \baselineskip 0.1in
%%%%%
%%%%%
%%%%%%%%%%%%%%%%%%%%%%%%%%%%%%%%%%%%%%%%%%%%%%%%%%%%%%%%%%%%%%%%%%%%%%%%%%%%%%%%%%%%%%%%%%%%%%%%%%%%%%%%%%%%%%%%%%%%%%%%%%%%%%%lufuliang
\section{ Introduction }

All graphs in this paper are simple graphs, that is, graphs with finite number of vertices without loops or parallel edges. Let $G$ be a graph with maximum degree $\D$ and minimum degree $\delta(G)$. Denote by $V(G)$ and $E(G)$ the vertex-set and edge-set of a graph $G$, respectively.  An edge-$k$-coloring of a graph $G$ is a mapping $\varphi:E(G)\rightarrow \{1,2,\cdots,k\}$ such that  $\phiv(e)\neq\phiv(f)$ for any two adjacent edges $e$ and $f$.  The codomain $\{1, 2, \cdots, k\}$ is called the color  set of $\phiv$.  Denote by $\mathcal{C}^k(G)$ the set of all edge-$k$-colorings of $G$. The chromatic index $\chi^\prime(G)$ is the least integer $k\geq 0$ such
that $\mathcal{C}^k(G)\neq \emptyset$.  We call $G$ {\it class} I if $\chi^\prime(G)=\Delta$. Otherwise, Vizing \cite{viz1} gives
$\chi^\prime(G)=\Delta+1$ and $G$ is said to be of {\it class} II.
An edge $e$  is called {\it critical} if
$\chi^\prime(G-e)<\chi^\prime(G)$, where $G-e$ is the subgraph obtained from $G$ by removing the edge $e$.
A graph $G$ is called {\it edge-$\D$-critical} if  $\chi^\prime(G) = \D+1$ and $\chi^\prime(H) \le \D$ holds for any proper subgraph $H$ of $G$.  Clearly, if $G$ is edge-$\D$-critical, then $G$ is connected and $\chi^\prime(G-e) = \D$ for any $e\in E(G)$.

In 1965, Vizing \cite{viz2} proposed the following conjecture about the structure of edge-$\Delta$-critical graphs.

\begin{conj}\label{con1} \emph{[Vizing \cite{viz2}]}.
Every edge-$\Delta$-critical graph with chromatic index at least 3 contains a 2-factor.
\end{conj}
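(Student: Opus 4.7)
This is Vizing's 1965 conjecture, still open in full generality; below is the natural line of attack and a precise identification of where it breaks down.

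The plan is to argue by contradiction, assuming $G$ is edge-$\Delta$-critical with $\chi'(G)=\Delta+1\ge 3$ and $G$ has no $2$-factor. The global input is standard: the $2$-factor theorem (the specialization of Tutte's $f$-factor theorem, or equivalently Belck's criterion) converts the absence of a $2$-factor into a triple $(S,T,U)$ with $U=V(G)\setminus(S\cup T)$ and a family of ``bad'' components of $G[U]$ whose total deficiency exceeds $2|S|-\sum_{t\in T}(d_G(t)-2)$. First I would split into two regimes: a ``large-$S$'' regime, in which the barrier is driven by a big cut, and a ``many-components'' regime, in which the barrier is driven by a large number of small odd components.

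The local input is Vizing's Adjacency Lemma together with its strengthenings. In any edge-$\Delta$-critical graph, every edge $uv$ forces $u$ to have at least $\Delta-d_G(v)+1$ neighbours of maximum degree; refinements (Zhang, Sanders--Zhao, Woodall, and the fan/multifan/Tashkinov-tree machinery) further restrict how small-degree vertices may cluster and bound how many colours are ``missing'' at any boundary. I would then try to rule out each Tutte--Belck regime using criticality: in the large-$S$ regime, the $\Delta$-neighbour requirement forces heavy edge traffic between $S$ and each odd component of $G-S$, making the deficiency inequality hard to sustain; in the many-components regime, the sheer number of small-degree vertices demanded by the barrier should contradict the scarcity of low-degree vertices imposed by the adjacency lemmas. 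As a back-up tactic, I would pick an edge $e$ crossing the barrier, fix a $\Delta$-edge-colouring $\varphi$ of $G-e$, and attempt to extend it to $G$ by Kempe swaps; any failure upgrades the local constraints further by growing a Tashkinov tree around $e$.

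The principal obstacle, and the reason the conjecture has resisted attack for six decades, is precisely this local-to-global bridge. Vizing-type lemmas are essentially one- or two-edge local, whereas Tutte--Belck barriers are genuinely global and can a priori consist of many well-separated odd components, each individually compatible with critical structure. Every known partial result--including the Hamiltonicity theorem that the present paper targets--controls the global barrier by hypothesis (large $\Delta$, small $\Delta$, or extra connectivity and degree conditions) rather than defeating it in general. The step I most expect to fail is therefore Step 3: showing that no Tutte--Belck barrier is compatible with an edge-$\Delta$-critical edge-colouring structure. A full proof appears to require a new idea that propagates recolouring information across a disconnecting set of $G$, which no existing technique supplies.
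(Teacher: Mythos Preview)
Your assessment is accurate: this is Vizing's 2-factor conjecture, and the paper does \emph{not} prove it. It is stated only as Conjecture~\ref{con1}, motivating the paper's partial result (Theorem~\ref{thml}), which establishes Hamiltonicity---and hence a 2-factor---under the additional hypothesis $\Delta\ge \tfrac{2n}{3}+12$. So there is no ``paper's own proof'' of this statement to compare against.

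Your sketch of the natural Tutte--Belck approach and the honest identification of the local-to-global gap are reasonable commentary, but they do not constitute a proof, nor do you claim they do. One clarification worth making: the paper's actual technique for its partial result is not a direct attack on a Tutte barrier. Instead it works through the Bondy--Chv\'atal closure, using refined Vizing-fan and Kierstead-path/broom counting (Lemmas~\ref{gen vizingfan} and~\ref{gen broom}) to force enough high-degree vertices that $V_{\ge \Delta/2}(G)$ becomes a clique in $C(G)$, and then inserts the low-degree vertices via disjoint paths. This is a fundamentally different strategy from the deficiency-inequality route you outline, and it succeeds precisely because the hypothesis $\Delta\ge \tfrac{2n}{3}+12$ supplies the global control that, as you correctly observe, the adjacency lemmas alone cannot.
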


In 1968, Vizing \cite{viz3} proposed a weaker conjecture on the independence number of edge-$\Delta$-critical graphs as follows.

\begin{conj}\label{con2} \emph{[Vizing \cite{viz3}]}.
For every edge-$\Delta$-critical graph of order $n,$ $\alpha(G)\leq \frac{n}{2}.$
\end{conj}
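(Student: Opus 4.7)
The plan is to attack Conjecture \ref{con2} via Vizing's Adjacency Lemma (VAL), the main structural tool available for edge-$\Delta$-critical graphs: if $uv\in E(G)$ and $d_G(u)=k$, then $v$ has at least $\Delta-k+1$ neighbors of degree $\Delta$. Suppose, for contradiction, that there exists an edge-$\D$-critical graph $G$ on $n$ vertices with an independent set $I$ of size $|I|>n/2$, and let $S=V(G)\setminus I$, so $|S|<|I|$. Every edge incident to a vertex of $I$ lands in $S$, which is the basic counting inequality to exploit.

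First I would apply VAL at each $v\in I$: if $d(v)=k$ then $v$ has at least $\D-k+1$ neighbors of degree $\D$, all necessarily in $S$. In particular each $v\in I$ has at least two $\D$-neighbors in $S$, and low-degree vertices of $I$ force many $\D$-vertices into $S$. I would then set up a discharging argument, giving each vertex initial charge $d(v)$ and redistributing across $I$-$S$ edges, with a rule that pays each $v\in I$ enough that its final charge is at least $\D$ while no $v\in S$ ends up with final charge exceeding $\D$; combined with $\sum_v d(v)\le\D n$, this would yield $\D|I|\le\D|S|$ and hence $|I|\le|S|$, contradicting $|I|>n/2$. Iterated applications of VAL to pairs of adjacent vertices (and the stronger second-neighborhood adjacency lemmas of Zhang and Woodall) would be needed to certify that $S$ really contains the population of $\D$-vertices required to absorb all the transferred charge.

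The step I expect to be the main obstacle—and the reason Vizing's conjecture has remained open since 1968—is precisely the charge-balancing step. VAL only controls neighborhoods one vertex at a time; when many low-degree vertices of $I$ share the same $\D$-neighbors in $S$, a single $\D$-vertex of $S$ is asked to donate charge to many independent partners, and the local budget runs out. To close the gap one would need a \emph{joint}-neighborhood strengthening of VAL, bounding how many pairwise-nonadjacent low-degree vertices can simultaneously be adjacent to a common $\D$-vertex, or alternatively a new global argument based on Tashkinov trees or fans that extracts an extra edge-coloring contradiction when $|I|$ is close to $n/2$. Without such a new structural lemma the discharging produces only the weaker Woodall-type bounds of the form $\alpha(G)\le cn$ with $c>1/2$, and bridging the remaining gap to the sharp value $n/2$ appears to require a genuinely new insight into edge-critical graphs.
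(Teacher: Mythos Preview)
The statement you are attempting to prove is Conjecture~\ref{con2}, and the paper does not prove it: it is stated there as Vizing's 1968 conjecture, still open in general, with only the special cases $\Delta\ge n/2$ (Luo--Zhao) and overfull/many-edges graphs (Gr\"unewald--Steffen) mentioned as verified. There is therefore no ``paper's own proof'' to compare against.

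Your proposal is not a proof either, and you say so yourself. The discharging scheme you outline --- give each vertex charge $d(v)$, push charge from $S$ to $I$ along edges guided by VAL so that every $v\in I$ ends at charge $\ge\Delta$ while every $v\in S$ stays at charge $\le\Delta$ --- would indeed force $|I|\le|S|$ if it could be made to balance, but as you correctly diagnose, the balancing step fails exactly when many low-degree vertices of $I$ share common $\Delta$-neighbors in $S$. VAL gives no control over this overlap, and the known second-neighborhood refinements (Woodall, Zhang) do not close the gap to $c=1/2$; they yield constants strictly above $1/2$. Your closing paragraph is an accurate assessment of why the conjecture remains open: what is missing is precisely a joint-neighborhood adjacency lemma or a new global recoloring argument, and neither is supplied here. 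So this is a reasonable research plan with a clearly identified obstruction, but it is not a proof, and the paper makes no claim to have one.
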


Vizing's independence number conjecture was verified by Luo and Zhao \cite{lz4} for edge-$\Delta$-critical graphs of order $n$ with $\Delta\geq \frac{n}{2},$ and by Gr\"{u}newald and Steffen \cite{gs} for edge-$\Delta$-critical graphs with many edges, including all overfull graphs.

Chen and Shan \cite{cs} verified Vizing's 2-factor conjecture for edge-$\Delta$-critical graphs of order $n$ with $\Delta\geq \frac{n}{2}.$ Obviously, if a graph is Hamiltonian, then it contains a 2-factor. Luo and Zhao \cite{lz} proved that an edge-$\Delta$-critical graph $G$ of order $n$ with $\Delta\geq \frac{6n}{7}$ is Hamiltonian. Furthermore, Luo, Miao and Zhao \cite{lmz2} showed that an edge-$\Delta$-critical graph $G$ of order $n$ with $\Delta\geq \frac{4n}{5}$ is Hamiltonian. Recently, Chen, Chen and Zhao \cite{cchen} showed that an edge-$\Delta$-critical graph $G$ of order $n$ with $\Delta\geq \frac{3n}{4}$ is Hamiltonian.

In this paper, we give the following result about the hamiltonicity of $\Delta$-critical graphs.

\begin{thm}\label{thml}
If $G$ is an edge-$\Delta$-critical graph of order $n$ with $\Delta\geq \frac{2n}{3}+12,$ then $G$ is Hamiltonian.
\end{thm}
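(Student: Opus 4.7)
The plan is to argue by contradiction. Suppose $G$ is an edge-$\Delta$-critical graph of order $n$ with $\Delta\geq \frac{2n}{3}+12$ that is not Hamiltonian. Since $\Delta\geq n/2$, the Chen--Shan theorem already guarantees a 2-factor, so the task is to upgrade a longest cycle of $G$ into a Hamiltonian cycle. Let $C=v_0v_1\cdots v_{c-1}v_0$ be a longest cycle of $G$ and set $R=V(G)\setminus V(C)$, so $|R|\geq 1$. The overall strategy follows the framework pioneered by Luo--Zhao and refined by Luo--Miao--Zhao and by Chen--Chen--Zhao: combine Vizing-type adjacency and recoloring restrictions available in edge-critical graphs with insertable-vertex / rotation arguments along the longest cycle $C$ to force $|R|=0$.

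First I would collect the structural consequences of edge-criticality that hold under our degree hypothesis. The Vizing Adjacency Lemma tells us that every vertex $v$ has at least $\max\{2,\Delta-d(v)+1\}$ neighbors of maximum degree, and the stronger second-neighborhood versions (via Vizing fans, Kierstead paths, or Tashkinov trees) give bounds like $d(u)+d(v)\geq \Delta+2$ whenever $uv\in E(G)$ and one of $u,v$ lies in the neighborhood of a low-degree vertex. Because $\Delta\geq \frac{2n}{3}+12$, every vertex of maximum degree misses only $\lfloor n/3\rfloor-13$ vertices, so the set $V_\Delta:=\{v:d(v)=\Delta\}$ behaves like a collection of \emph{near-universal} vertices. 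Consequently every $u\in R$ has many neighbors in $V_\Delta\cap V(C)$, and in fact can be paired with a $\Delta$-neighbor $w$ that, together with $u$, sees almost all of $V(C)$.

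Next I would extract quantitative information from the maximality of $C$. The standard Pósa-type insertability argument says that for $u\in R$ and any neighbor $v_i\in N_C(u)$, the successor $v_{i+1}$ cannot be a neighbor of $u$, nor can it be a neighbor of any $u'\in R$ that meets $v_i$ via a short detour. Applying this simultaneously to a pair $(u,w)$ with $u\in R$, $w\in V_\Delta$, $uw\in E(G)$, together with summing $\sum_{v_j\in N_C(w)}d_R(v_j)\leq |R|\cdot \Delta$, yields an inequality of the shape $c\geq 2\Delta-O(1)-g(|R|)$. Plugging in $n=c+|R|$ and $\Delta\geq \frac{2n}{3}+12$, and combining with the Vizing-adjacency lower bounds on degrees of $C$-vertices adjacent to $R$, the resulting system of inequalities should be inconsistent, forcing $|R|=0$ and hence $G$ Hamiltonian.

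The main obstacle, I expect, is sharpening the cycle-versus-$R$ inequality enough to drop the bound from the previously known $\frac{3n}{4}$ down to $\frac{2n}{3}$. Earlier arguments effectively pair each neighbor of $R$ on $C$ with a single forbidden adjacent vertex; doing better requires finding, for each $v_i\in N_C(u)$, \emph{two} blocked nearby positions (both $v_{i-1}$ and $v_{i+1}$, or a longer detour of length $2$), which in turn demands iterated Kempe-chain recolorings on edges of $C$ near $v_i$ without destroying the edge-coloring of $G-e$ that witnesses criticality. The additive constant $12$ almost certainly arises from bookkeeping the small number of recoloring steps near vertices of degree less than $\Delta$, together with handling a few pathological short-detour cases. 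The bulk of the technical work will therefore be in verifying that the required recolorings are always feasible, and in carefully tracking which $C$-segments have been ``used up'' so that the final double counting does yield the desired contradiction.
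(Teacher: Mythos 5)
Your proposal is a programme, not a proof, and the step on which everything hinges is precisely the one you leave unverified. The whole argument funnels into the claimed inequality of the shape $c\geq 2\Delta-O(1)-g(|R|)$, obtained by finding, for each neighbour $v_i\in N_C(u)$ of an outside vertex $u$, \emph{two} blocked positions on $C$ via Kempe-chain recolorings; you acknowledge that earlier rotation/insertion arguments of this type only reach $\Delta\geq\frac{3n}{4}$ (Chen--Chen--Zhao) and you give no lemma, no recoloring scheme, and no counting that would actually produce the second blocked position. There are also concrete inaccuracies along the way: edge-criticality only gives $d(u)+d(v)\geq\Delta+2$ for \emph{edges} $uv$, so a vertex $u\in R$ together with a $\Delta$-neighbour sees roughly $\Delta+2\approx\frac{2n}{3}$ vertices, not ``almost all of $V(C)$''; and the minimum degree of $G$ can be far below $\Delta$, so the set $V_\Delta$ being ``near-universal'' does not control the neighbourhoods of the low-degree vertices that are the real obstruction. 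As written, the contradiction you hope for is asserted (``the resulting system of inequalities should be inconsistent''), not derived.

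For comparison, the paper does not work with a longest cycle at all. Its new content is two coloring lemmas (Lemmas \ref{gen vizingfan} and \ref{gen broom}) extending Woodall's bound on $\sigma_q(x,y)$ to an arbitrary threshold $q\leq\Delta-10$ and to pairs of adjacent vertices; these show that some second neighbourhood already contains at least $\frac{n}{2}$ vertices of degree at least $(1-\frac{13}{81})\Delta$ (Claims \ref{clm1} and \ref{clm5}). From this, all vertices of degree at least $\frac{\Delta}{2}$ form a clique in the Bondy--Chv\'atal closure $C(G)$ (Claim \ref{clm2}); vertices of degree below $\frac{\Delta}{2}$ are handled by Woodall's Lemma \ref{p} plus the bipartite covering lemma of Chen--Ellingham--Saito--Shan (Lemma \ref{lem14}), which lets one insert them along disjoint paths between high-degree vertices, and in fact Claim \ref{clm4} shows $\delta(G)\geq\frac{\Delta}{2}$ cannot fail. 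Hamiltonicity then follows from the closure theorem and Lemma \ref{lem13}. If you want to pursue your route, the missing ingredient is an actual proof of the two-blocked-positions claim together with the feasibility of the supporting recolorings; without it the proposal does not establish the theorem.
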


It would be nice to know the minimum number $\alpha$ ($0<\alpha<1$) such that every edge-$\Delta$-critical graph of order $n$ with $\Delta\geq\alpha n$ is Hamiltonian.
Our main techniques applied to prove Theorem \ref{thml} are the following: (1) extending Woodall's Lemma ($q=2\Delta-d(x)-d(y)+2$, see Lemma \ref{p}) to an arbitrary $q$ with $q\leq \Delta-10$ (see Lemma \ref{gen vizingfan}); (2) extending Woodall's Lemma (consider the neighbor of a vertex $x$, see Lemma \ref{p}) to Lemma \ref{gen broom} (consider the neighbor of two adjacent vertices).

We will give a few technic lemmas in Section 2 and give the proof of Theorem \ref{thml} in Section 3. Due to the length of the proofs of Lemmas 4 and 5, we will prove Lemmas 4 and 5 in Section 4.
Let $G$ be a graph and $x$ be a vertex of $G$. Denote by
$N(x)$ and $d(x)$ the neighborhood and degree of $x$, respectively. For any nonnegative integer $k$, we call a vertex $x$ a $k$-vertex if $d(x) = k$, a $(<k)$-vertex if $d(x)< k$, and $(>k)$-vertex if $d(x)> k$. Correspondingly, we call a neighbor $y$ of $x$ a $k$-neighbor if $d(y) = k$, etc..
Denote by $V_{\ge k}(G)$ the subset of $V(G)$ of vertices with degree greater than or equal to $k$.
Let $k$ be a positive integer such that $\mathcal{C}^k(G-e) \ne \emptyset$,    $\varphi\in\mathcal{C}^k(G-e)$ and $v \in V(G)$.   Let $\varphi(v) = \{\varphi(e) \,:\,   e \mbox { is incident with }~ v \}$ and $\bar{\varphi}(v) = \{ 1, \cdots, k\}\setminus\varphi(v)$.
We call $\varphi(v)$ the set of colors seen by  $v$ and $\bar{\varphi}(v)$ the set of colors missing at $v$.
A set $X\subseteq V(G)$
is called {\it elementary} with respect to $\varphi$  if $\bar{\varphi}(u)\cap \bar{\varphi}(v)=\emptyset$ for every two distinct
vertices $u,v \in X$. For any color $\alpha$, let $E_{\alpha}$ denote the set of edges assigned color $\alpha$. Clearly, $E_{\alpha}$ is a matching of $G$.  For any two colors $\alpha$ and $\beta$, the components  of induced by edges in $E_{\alpha}\cup E_{\beta}$, named $(\alpha, \beta)$-chains,   are even cycles and paths with alternating color $\alpha$ and $\beta$.
For a vertex $v$ of $G$, we denote by
$P_v(\alpha, \beta, \varphi)$ the unique $(\alpha, \beta)$-chain that contains the
vertex $v$.   Let $\varphi/ P_v(\alpha, \beta, \varphi)$ denote the edge-$k$-coloring obtain from $\phiv$ by switching colors $\alpha$ and $\beta$ on the edges on $P_v(\alpha, \beta, \phiv)$.

\section{ Lemmas }

Let $q$ be a positive number, $G$ be an edge-$\D$-critical graph and $x\in V(G)$. For each $y\in N(x)$, let $\sigma_q(x, y) =  |\{ z\in N(y)\setminus \{x\} \ : \ d(z) \ge q\}|$, the number of neighbors of $y$ (except $x$) with degree at least $q$.  Vizing studied the case $q=\D$ and obtained the following result.

\begin{lem} {\em [Vizing's Adjacency Lemma \cite{viz2}]}\label{vizingaj}
Let $G$ be  an edge-$\Delta$-critical graph.  Then   $\sigma_{\D} (x, y) \ge \D -d(x) +1$ holds for every  $xy\in E(G)$.
\end{lem}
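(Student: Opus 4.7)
The plan is to start from a $\Delta$-edge-coloring $\varphi$ of $G - xy$, which exists because $xy$ is a critical edge of $G$, and use Vizing-fan recoloring to extract $\Delta - d(x) + 1$ distinct $\Delta$-neighbors of $y$. Since $\varphi$ does not extend to $G$, no single color is simultaneously missing at $x$ and $y$, so $\{x,y\}$ is elementary with respect to $\varphi$ and in particular $|\bar\varphi(x)| = \Delta - d(x) + 1$.

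The easy step is to identify the candidate neighbors. For every $\alpha \in \bar\varphi(x)$, the color $\alpha$ must appear on some edge incident with $y$ — otherwise we could color $xy$ by $\alpha$ and extend $\varphi$ to a $\Delta$-coloring of $G$, contradicting $\chi'(G) = \Delta+1$. Let $y_\alpha \in N(y) \setminus \{x\}$ be the other endpoint of this $\alpha$-edge. Because distinct colors at $y$ produce distinct edges, $\{y_\alpha : \alpha \in \bar\varphi(x)\}$ is a set of $\Delta - d(x) + 1$ distinct vertices, all in $N(y) \setminus \{x\}$.

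The heart of the proof is to show that each $y_\alpha$ is a $\Delta$-vertex. Suppose for contradiction that $d(y_\alpha) < \Delta$ for some $\alpha \in \bar\varphi(x)$, and pick $\beta \in \bar\varphi(y_\alpha)$; necessarily $\beta \neq \alpha$. I would examine the $(\alpha,\beta)$-Kempe chain $P_{y_\alpha}(\alpha,\beta,\varphi)$, which begins at $y_\alpha$ with the edge $y_\alpha y$ (colored $\alpha$), and perform a carefully chosen Kempe swap — possibly preceded by building a maximal Vizing fan at $x$ starting with $y$ in order to produce an auxiliary color missing at a later fan vertex — so as to arrive at a $\Delta$-coloring $\varphi^*$ of $G - xy$ in which some color is missing at \emph{both} $x$ and $y$. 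That coloring extends to $G$, contradicting criticality.

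The main obstacle is the Kempe-chain bookkeeping in the last step. One must split into cases according to whether $\beta \in \bar\varphi(x)$ or $\beta \in \varphi(x)$, whether $P_{y_\alpha}(\alpha,\beta,\varphi)$ reaches $x$, and — when it does — at which vertex of the Vizing fan at $x$ the chain first creates a repeated missing color; in each case the correct chain to swap may be $P_{y_\alpha}(\alpha,\beta,\varphi)$, $P_x(\alpha,\gamma,\varphi)$ for some auxiliary $\gamma \in \bar\varphi(x)$, or a chain from a later fan vertex. Verifying that every case really does yield an extension of $\varphi$ to $G$ is the whole content of the classical Vizing fan recoloring argument; everything else is counting.
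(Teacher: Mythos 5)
Your reduction is fine up to the point where you identify the candidate neighbours, but the heart of your plan --- showing that every $y_\alpha$ (the neighbour of $y$ reached from $y$ by a colour $\alpha\in\bar{\varphi}(x)$) has degree $\Delta$ --- is a false statement, so no amount of Kempe-chain bookkeeping can complete it. Concretely, let $G$ be the edge-$3$-critical graph obtained from $K_4$ on $\{a,b,c,d\}$ by subdividing the edge $cd$ with a new vertex $v$, and take $x=a$, $y=c$. The assignment $\varphi(ab)=1$, $\varphi(ad)=2$, $\varphi(bc)=2$, $\varphi(bd)=3$, $\varphi(cv)=3$, $\varphi(dv)=1$ is a proper $3$-edge-colouring of $G-ac$ with $\bar{\varphi}(a)=\{3\}$, and the neighbour of $c$ seeing colour $3$ is $v$, of degree $2<\Delta$. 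Since $G$ really is critical, the hypothesis $d(y_\alpha)<\Delta$ cannot possibly be driven to a colouring of $G-xy$ with a common missing colour at $x$ and $y$, which is exactly what your contradiction scheme needs. What the elementarity of the three-vertex Kierstead path $\{x,y,y_\alpha\}$ actually yields is only $\bar{\varphi}(y_\alpha)\cap(\bar{\varphi}(x)\cup\bar{\varphi}(y))=\emptyset$, i.e. $d(y_\alpha)\ge 2\Delta-d(x)-d(y)+2$, which is in general weaker than $d(y_\alpha)=\Delta$ (this weaker bound is precisely the threshold behind Woodall's $\sigma(x,y)$ in Lemma \ref{lemma2.4}, and the paper's Lemmas \ref{plong} and \ref{p4} likewise show that low-degree vertices in such configurations force elementarity, not a contradiction).

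The classical proof behind the citation takes a different route: build a maximal multifan (Vizing fan) at $y$ with respect to $xy$ and $\varphi$, i.e. distinct neighbours $z_1,\dots,z_p$ of $y$ with $\varphi(yz_1)\in\bar{\varphi}(x)$ and each later $\varphi(yz_i)$ missing at $x$ or at an earlier $z_j$. Criticality makes $\{x,y,z_1,\dots,z_p\}$ elementary, and maximality forces every colour of $\bar{\varphi}(x)$ and every colour missing at a fan vertex of degree less than $\Delta$ to occur among the $p$ pairwise distinct colours $\varphi(yz_i)$; counting then gives at least $|\bar{\varphi}(x)|=\Delta-d(x)+1$ fan vertices of degree $\Delta$, and these need not be your $y_\alpha$'s (in the example above, the degree-$3$ neighbour $b$ is attached to $c$ by colour $2$, which is present at $a$). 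Separately, note that even if your central claim had been true, the proposal defers the entire case analysis of the swaps to ``the classical Vizing fan recoloring argument,'' so as written it is a plan rather than a proof; but the decisive defect is that the claim it aims at is false.
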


Woodall~\cite{w1} studied  $\sigma_q(x,y)$ for the case $q=2\D -d(x) -d(y) +2$ and obtained the following two results.   For convention, we let $\sigma(x,y) = \sigma_q(x, y)$ when $q=2\D -d(x) -d(y) +2$.
\begin{lem}\label{lemma2.4}{\em [Woodall~\cite{w1}]} Let $xy$ be an edge in an  edge-$\Delta$-critical graph $G$. Then there are at least $\Delta-\sigma(x,y)\geq \Delta-d(y)+1$ vertices $z\in N(x)\setminus\{y\}$ such that $\sigma(x,z)\geq 2\Delta-d(x)-\sigma(x,y).$
\end{lem}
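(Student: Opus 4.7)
The plan is to apply the standard Vizing-fan / Kempe-chain toolkit to a $\Delta$-edge-coloring of $G-xy$, which exists because $xy$ is critical. Fix $\varphi\in\mathcal{C}^\Delta(G-xy)$. Then $\{x,y\}$ is elementary with respect to $\varphi$ (else $\varphi$ extends to a $\Delta$-edge-coloring of $G$, contradicting $\chi'(G)=\Delta+1$), so $|\bar\varphi(x)\cup\bar\varphi(y)|=(\Delta-d(x)+1)+(\Delta-d(y)+1)=q$, exactly the threshold appearing in the definition of $\sigma(x,y)$. Partition $N(y)\setminus\{x\}$ into the "high-degree" set $A=\{w:d(w)\ge q\}$ (with $|A|=\sigma(x,y)$) and the "low-degree" set $B=N(y)\setminus(A\cup\{x\})$.

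For every color $\gamma\in\varphi(x)$, let $z_\gamma\in N(x)\setminus\{y\}$ be the endpoint of the edge at $x$ colored $\gamma$. Every vertex of $N(x)\setminus\{y\}$ arises this way, and Vizing's Adjacency Lemma forces $\sigma_\Delta(x,y)\ge\Delta-d(x)+1$, so $|N(x)\setminus\{y\}|=d(x)-1\ge\Delta-\sigma(x,y)$ since $\sigma(x,y)\ge\sigma_\Delta(x,y)$. The candidate set $Z$ will be $\{z_\gamma:\gamma\in\bar\varphi(y)\}$ together with $\{z_\gamma:\gamma\text{ is the color of some edge }yw\text{ with }w\in B\}$; there are $|\bar\varphi(y)|+|B|=\Delta-d(y)+1+(d(y)-1-\sigma(x,y))=\Delta-\sigma(x,y)$ colors contributing and they are distinct by construction, so $|Z|\ge\Delta-\sigma(x,y)$. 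The reason each of these is a valid choice is that any Kempe chain $P_x(\gamma,\beta,\varphi)$ with $\gamma\in\bar\varphi(y)$ and $\beta\in\bar\varphi(x)$ must end at $y$ (otherwise swapping along it yields a coloring extending to $G$), and for each $w\in B$ the edge $yw$ similarly cannot be detached from $x$ via a Kempe switch.

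To finish, I would show $\sigma(x,z)\ge 2\Delta-d(x)-\sigma(x,y)$ for each $z\in Z$. The idea is to suppose for contradiction that too many neighbors $w$ of $z$ have $d(w)<2\Delta-d(x)-d(z)+2$. For any such $w$, the missing-color sets satisfy $|\bar\varphi(x)\cup\bar\varphi(z)\cup\bar\varphi(w)|$ exceeds $2\Delta-d(x)-\sigma(x,y)$, so one can locate a pair $(\alpha,\beta)$ with $\alpha\in\bar\varphi(w)$ and $\beta\in\bar\varphi(x)$ for which the $(\alpha,\beta)$-switch along $P_x(\alpha,\beta,\varphi)$ frees up the color $\varphi(xz)$ at $x$, allowing the Vizing fan $(y,z,w,\ldots)$ at $x$ to be shifted so that $xy$ receives a color; this contradicts criticality. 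Counting the $w$'s excluded in this way gives the stated bound. The main obstacle is precisely this Kempe-chain bookkeeping: one must choose $(\alpha,\beta)$ so that the swap does not destroy the elementary property of $\{x,y\}$ or move the fan's control color onto a vertex that blocks further extension, and this requires a careful case analysis mirroring the proof of Vizing's Adjacency Lemma but with the refined parameter $q$ in place of $\Delta$.
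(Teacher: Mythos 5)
The paper does not prove this lemma at all---it is quoted from Woodall~\cite{w1}---so the only question is whether your argument stands on its own, and it does not yet. The first half (choosing the candidate neighbours of $x$) is essentially sound but has an unstated step: for $w\in B$ you take $z_\gamma$ with $\gamma=\varphi(yw)$, yet such a colour need not lie in $\varphi(x)$ a priori, since $\bar{\varphi}(x)\subseteq\varphi(y)$. It is true that any $w\in N(y)$ with $\varphi(yw)\in\bar{\varphi}(x)$ forms a Vizing fan $\{x,y,w\}$ that is elementary, forcing $d(w)\ge 2\Delta-d(x)-d(y)+2$ and hence $w\in A$; this is exactly what makes your count $|\bar{\varphi}(y)|+|B|=\Delta-\sigma(x,y)$ legitimate, but you assert the count ``by construction'' without this argument.

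The real gap is the second half, which is the heart of the lemma: showing $\sigma(x,z)\ge 2\Delta-d(x)-\sigma(x,y)$ for each chosen $z$. What you offer is an intention, not a proof, and its mechanism is not coherent as stated. A vertex $w\in N(z)$ cannot sit in a Vizing fan centred at $x$ (fans at $x$ consist of neighbours of $x$), the inequality you claim for $|\bar{\varphi}(x)\cup\bar{\varphi}(z)\cup\bar{\varphi}(w)|$ is neither justified nor obviously relevant, and---crucially---nothing in your sketch explains where the term $\sigma(x,y)$ in the target bound comes from. Note that $2\Delta-d(x)-\sigma(x,y)=|\bar{\varphi}(x)|+|\bar{\varphi}(y)|+|B|-1$, so any proof must make the \emph{low-degree neighbours of $y$} (the set $B$) interact with the neighbourhood of $z$, typically by showing that a Kierstead-path/broom-like set containing $x$, $y$, $z$ and vertices of $B$ is elementary and then counting missing colours, in the spirit of Lemmas \ref{gen vizingfan} and \ref{gen broom} of this paper. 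That bookkeeping---which you yourself flag as ``the main obstacle''---is precisely the content of Woodall's proof, and it is absent here, so the proposal as written does not establish the lemma.
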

Furthermore, Woodall defined the following two parameters.
\begin{eqnarray*}
p_{min}(x)& := & \min_{y\in N(x)}\sigma(x,y)-\Delta+d(x)-1 \quad  \mbox{ and }\\
p (x) & := & \min \{\ p_{min}(x), \left\lfloor\frac{d(x)}{2}\right\rfloor-1\ \}.
\end{eqnarray*}
Clearly, $p(x) < d(x)/2 -1$. As a corollary, the following lemma shows that there are about $d(x)/2$ neighbors $y$ of $x$ such that $\sigma(x, y)$ is at least $\D/2$.

\begin{lem}\label{p} {\em [Woodall~\cite{w1}]} Every vertex $x$ in an edge-$\Delta$-critical graph has
 at least $d(x)-p(x)-1$ neighbors $y$ for which $\sigma(x,y)\geq \Delta-p(x)-1$.
\end{lem}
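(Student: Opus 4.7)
The plan is to derive Lemma \ref{p} directly from Woodall's Lemma \ref{lemma2.4}, by making a specific choice of edge $xy$ that turns the quantitative bound of Lemma \ref{lemma2.4} into precisely the statement we want. The key observation is that the thresholds $d(x)-p(x)-1$ and $\Delta-p(x)-1$ appearing in Lemma \ref{p} are exactly what pops out of Lemma \ref{lemma2.4} when $\sigma(x,y)=\Delta-d(x)+p(x)+1$, so the natural move is to pick $y\in N(x)$ achieving the minimum $\sigma(x,y)$ over $N(x)$.

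More concretely, let $y^{*}\in N(x)$ satisfy $\sigma(x,y^{*})=\min_{y\in N(x)}\sigma(x,y)=\Delta-d(x)+p_{min}(x)+1$, and split into two cases according to which term realizes the minimum in the definition $p(x)=\min\{p_{min}(x),\lfloor d(x)/2\rfloor-1\}$.

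\emph{Case 1: $p(x)=p_{min}(x)$.} Substituting $p_{min}(x)=p(x)$ gives $\sigma(x,y^{*})=\Delta-d(x)+p(x)+1$. Apply Lemma \ref{lemma2.4} to the edge $xy^{*}$: there exist at least
\[
\Delta-\sigma(x,y^{*})\;=\;d(x)-p(x)-1
\]
vertices $z\in N(x)\setminus\{y^{*}\}$ with
\[
\sigma(x,z)\;\ge\;2\Delta-d(x)-\sigma(x,y^{*})\;=\;\Delta-p(x)-1,
\]
which is exactly the required conclusion.

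\emph{Case 2: $p(x)=\lfloor d(x)/2\rfloor-1<p_{min}(x)$.} Then by the definition of $p_{min}$, every $y\in N(x)$ satisfies $\sigma(x,y)\ge\Delta-d(x)+p_{min}(x)+1\ge\Delta-d(x)+\lfloor d(x)/2\rfloor+1$. A short check (separating $d(x)$ even and odd) shows this quantity is at least $\Delta-\lfloor d(x)/2\rfloor=\Delta-p(x)-1$. So in this case \emph{all} $d(x)$ neighbors of $x$ satisfy $\sigma(x,y)\ge\Delta-p(x)-1$, and since $d(x)\ge d(x)-p(x)-1$ the conclusion holds trivially, without invoking Lemma \ref{lemma2.4} at all.

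There is essentially no obstacle beyond bookkeeping: both cases are short, and the only subtlety is making sure the floor function in Case 2 does not spoil the inequality $\Delta-d(x)+\lfloor d(x)/2\rfloor+1\ge\Delta-\lfloor d(x)/2\rfloor$, which comes down to $2\lfloor d(x)/2\rfloor+1\ge d(x)$ and is true for every integer $d(x)$. So the proof is really just a one-line application of Lemma \ref{lemma2.4} at a minimizing neighbor, together with a trivial second case that handles the clamp $\lfloor d(x)/2\rfloor-1$ in the definition of $p(x)$.
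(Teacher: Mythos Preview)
The paper does not supply its own proof of Lemma~\ref{p}; it is quoted directly from Woodall~\cite{w1} without argument. Your derivation is correct and is in fact the natural way to obtain Lemma~\ref{p} as a corollary of Lemma~\ref{lemma2.4}: choosing $y^{*}$ to minimize $\sigma(x,\cdot)$ makes the two bounds in Lemma~\ref{lemma2.4} read exactly $d(x)-p(x)-1$ and $\Delta-p(x)-1$ in the case $p(x)=p_{min}(x)$, and the clamp case $p(x)=\lfloor d(x)/2\rfloor-1$ is disposed of by the parity check $2\lfloor d(x)/2\rfloor+1\ge d(x)$ you give. There is nothing to correct.
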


In our proof, we only need one neighbor $y$ of $x$ such that $\sigma_q(x,y)$ is large. By allowing $q$ to take various values, roughly speaking, we can count the number of edges more flexibility. The only drawback is that the lower bound $\sigma_q(x,y)$ in Lemmas \ref{gen vizingfan} and  \ref{gen broom} are smaller, but not too much. It is much clearly in Lemmas \ref{gen vizingfan} and \ref{gen broom}.

\begin{lem}\label{gen vizingfan}
Let $xy$ be an edge in an edge-$\Delta$-critical graph $G$ and $q$  a positive number. If $d(x)<\frac{\Delta}{2}$ and $q\leq \Delta-10$, then
there exists a vertex $z\in N(x)\setminus\{y\}$ such that $\sigma_q(x,y)+\sigma_q(x,z)>2\Delta-d(x)-\frac{6d(x)-8}{\Delta-q}-\frac{8(d(x)-2)}{(\Delta-q)^2}.$
\end{lem}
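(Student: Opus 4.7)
The plan is to take any $\D$-edge-coloring $\phiv$ of $G-xy$ (which exists by edge-$\D$-criticality) and extract $z$ via Vizing-fan and Kempe-chain manipulations combined with a counting argument. Write $S=\phibar(x)$ and $T=\phibar(y)$; then $|S|=\D-d(x)+1$, $|T|=\D-d(y)+1$, $S\cap T=\emptyset$, and $T\subseteq\phiv(x)$. For each $\beta\in S$ let $y_\beta\in N(y)\setminus\{x\}$ be the vertex with $\phiv(yy_\beta)=\beta$, and for each $\alpha\in T$ let $z_\alpha\in N(x)\setminus\{y\}$ be the vertex with $\phiv(xz_\alpha)=\alpha$. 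A standard Kempe-chain argument shows that $P_y(\alpha,\beta,\phiv)$ is a $y$-$x$ path whose first edge is $yy_\beta$ and whose last edge is $xz_\alpha$, for every $(\alpha,\beta)\in T\times S$.

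Partition $S=S_H\cup S_L$, where $S_H=\{\beta\in S:d(y_\beta)\ge q\}$; since $\beta\mapsto y_\beta$ is injective and each such $y_\beta$ is a high-degree neighbour of $y$ other than $x$, we have $|S_H|\le\sigma_q(x,y)$ and hence $|S_L|\ge\D-d(x)+1-\sigma_q(x,y)$. For each $\beta\in S_L$, $|\phibar(y_\beta)|\ge\D-q+1\ge 11$ provides the slack driving the argument; an analogue of the Vizing extension argument gives $\phibar(y_\beta)\subseteq\phiv(y)$, so every $\delta\in\phibar(y_\beta)$ is the colour of some edge at $y$. For each pair $(\beta,\delta)$ with $\beta\in S_L$ and $\delta\in\phibar(y_\beta)$, I would apply the swap $\phiv/P_{y_\beta}(\beta,\delta,\phiv)$ (possibly iterated with a second swap at the newly uncovered colours) and analyse the new missing sets at $x,y,y_\beta$; this either produces a $\D$-edge-coloring of $G-xz$ for some $z\in N(x)\setminus\{y\}$---allowing us to repeat the adjacency analysis on $xz$ and lower-bound $\sigma_q(x,z)$---or forces a structural constraint tying $\beta$, $\delta$, and a low-degree neighbour of some $z\in N(x)\setminus\{y\}$.

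Aggregating these consequences over all $|S_L|\cdot(\D-q+1)$ pairs $(\beta,\delta)$ yields a double count of triples $(\beta,\delta,w)$ whose right-hand side is bounded by the total number of low-degree neighbours attached to the various $z\in N(x)\setminus\{y\}$, weighted by multiplicities. Rearranging and selecting the best $z$ then produces
\[
\sigma_q(x,y)+\sigma_q(x,z)\ >\ 2\D-d(x)-\frac{6d(x)-8}{\D-q}-\frac{8(d(x)-2)}{(\D-q)^2},
\]
where the two correction terms reflect a first-order slack (one colour consumed per Kempe swap, scaling like $1/(\D-q)$) and a second-order slack (nested swaps interacting at a common vertex, scaling like $1/(\D-q)^2$). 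The main obstacle is to control the interference between successive Kempe swaps---one swap can invalidate the chain structure used by the next---and to pin down the precise constants $6$ and $8$; I expect this demands a case split on whether each relevant Kempe chain contains $x$, together with the hypotheses $d(x)<\D/2$ and $q\le\D-10$ to exclude the degenerate short-chain configurations.
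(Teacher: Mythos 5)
Your preamble is fine as far as it goes: elementarity of $\{x,y\}$, the fact that $P_y(\alpha,\beta,\phiv)$ is an $x$--$y$ path for $\alpha\in\phibar(y)$, $\beta\in\phibar(x)$, and the Kierstead-path fact $\phibar(y_\beta)\subseteq\phiv(y)$ for low-degree $y_\beta$ are all standard and correct. But everything after that is a declaration of intent rather than an argument, and the declared part is exactly where the lemma lives. You never specify how the vertex $z$ is selected, what quantity is being double counted, why the dichotomy ``either a $\D$-coloring of $G-xz$ appears or a structural constraint is forced'' holds, or how the aggregation turns into the stated inequality; and you yourself flag the two fatal issues --- interference between successive Kempe swaps and the origin of the constants $6$ and $8$ --- as unresolved. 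Since the error terms $\frac{6d(x)-8}{\D-q}+\frac{8(d(x)-2)}{(\D-q)^2}$ are the whole content of the lemma (the leading term $2\D-d(x)$ is easy), a proof that does not derive them has not proved the statement. As it stands this is a plan with a genuine gap at its core, not a proof.

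For contrast, the paper's route makes the missing steps concrete. It fixes a \emph{feasible} $z$, i.e.\ one admitting $\phiv\in\mathcal{C}^{\D}(G-xy)$ with $\phiv(xz)\in\phibar(y)$, and writes the exact counting identity
$\sigma_q(x,y)+\sigma_q(x,z)\ge 2\D-d(x)-|C_y(\phiv)|-|C_z(\phiv)|-|T_0(\phiv)|$,
where $C_y,C_z$ record low-degree neighbours of $y,z$ seen through $\phibar(x)\cup\phibar(z)$ resp.\ $\phibar(x)\cup\phibar(y)$, and $T_0$ records colours $k\in\phiv(x)\cap\phiv(y)\cap\phiv(z)$ with both $y_k$ and $z_k$ of low degree; this uses that $\phibar(x),\phibar(y),\phibar(z),\phiv(x)\cap\phiv(y)\cap\phiv(z)$ partition the colour set. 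The first error term then comes from $|C_y|,|C_z|<\frac{d(x)-2}{\D-q}$, proved via elementarity of the simple broom $\{x,y,z\}\cup Z(\phiv)$ (Lemma~\ref{broom}) --- a tool absent from your outline, and not replaceable by single Kierstead paths. The second, harder half (statement II) bounds $|T_0|$ for \emph{some} feasible colouring by a contradiction argument over colourings optimal with respect to maximizing $|C_z|+|C_y|$ and then minimizing $|C_z\cap C_y|$, with a further maximization of $|E_M|$; Claims A--D control the relevant Kempe chains under this optimality (this is precisely how the ``interference between swaps'' you worry about is tamed), and a final explicit recolouring sequence strictly increases $|C_y|+|C_z|$, contradicting optimality. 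Without some substitute for this extremal-colouring machinery, your aggregation step cannot be carried out, and the specific bound of the lemma is not reachable from what you have written.
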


\begin{lem}\label{gen broom}
Let $x_1x_2$ be an edge in an  edge-$\Delta$-critical graph $G$ and $q$  a positive number. If $d(x_1)+d(x_2)<\frac{3}{2}\D$, $q\leq \Delta-10$ and $\delta(G)\geq \frac{\Delta}{2}$, then
there exists a pair of vertices $\{z,y\}$ with $z\in N(x_1)\setminus \{x_2\}$ and $y\in N(x_2)\setminus \{x_1,z\}$ such that
$\sigma_q(x_1,z)+\sigma_q(x_2,y)>
3\Delta -d(x_1)-d(x_2)-\frac{6(d(x_1)+d(x_2)-\Delta)-4}{\Delta-q}-\frac{8(d(x_1)+d(x_2)-\Delta-2)}{(\Delta-q)^2}$.
\end{lem}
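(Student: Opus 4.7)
The plan is to mimic the argument of Lemma \ref{gen vizingfan}, but with the edge $x_1 x_2$ taking the role of the single vertex $x$. The central observation is that for any $\Delta$-edge-coloring $\varphi$ of $G - x_1 x_2$ (which exists by criticality), the non-extendability of $\varphi$ to $G$ forces $\bar{\varphi}(x_1)\cap\bar{\varphi}(x_2)=\emptyset$, and hence $|\varphi(x_1)\cap\varphi(x_2)|=d(x_1)+d(x_2)-\Delta-2$. Thus the quantity $d(x_1)+d(x_2)-\Delta$ measures the overlap of the two color palettes at $x_1$ and $x_2$, and it plays the same role in the counting that $d(x)$ plays in Lemma \ref{gen vizingfan}.

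First I would fix such a coloring $\varphi$ and classify each $z\in N(x_1)\setminus\{x_2\}$ and $y\in N(x_2)\setminus\{x_1\}$ according to the color on its edge to $x_1$ or $x_2$ and whether that color lies in $\varphi(x_1)\cap\varphi(x_2)$. Next, adapting the Vizing-fan/Kempe-chain recoloring used in the proof of Lemma \ref{gen vizingfan}, I would bound the number of ``bad'' neighbors $z$ (those forcing $\sigma_q(x_1,z)$ to be small) and ``bad'' neighbors $y$ (those forcing $\sigma_q(x_2,y)$ to be small): each such bad neighbor forces a specific structural configuration rooted at a missing color of $x_1$ or $x_2$, and criticality allows only finitely many such configurations. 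The resulting count splits into a first-order contribution that yields the $\frac{1}{\Delta-q}$ term with coefficient $6(d(x_1)+d(x_2)-\Delta)-4$, and a second-order contribution from pairs where a Kempe swap at $x_1$ interacts with one at $x_2$, yielding the $\frac{1}{(\Delta-q)^2}$ term with coefficient $8(d(x_1)+d(x_2)-\Delta-2)$. The hypothesis $q\le\Delta-10$ is what keeps these estimates valid.

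Finally I would apply an averaging/pigeonhole argument over admissible pairs $(z,y)\in(N(x_1)\setminus\{x_2\})\times(N(x_2)\setminus\{x_1\})$ with $z\ne y$. The assumption $\delta(G)\ge\Delta/2$ guarantees at least roughly $\Delta/2$ candidates on each side, far more than the error terms and the single exclusion $z\ne y$, so at least one such pair achieves the required strict inequality.

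The main obstacle will be the coupling between the $x_1$-side and $x_2$-side recoloring arguments: a Kempe-chain swap performed to expose the neighborhood structure of $z$ at $x_1$ may alter the colors seen at $x_2$, and hence the view around $y$, so the two halves cannot simply be estimated independently. (Note that Lemma \ref{gen vizingfan} is not directly applicable twice here, since $\delta(G)\ge \Delta/2$ violates its hypothesis $d(x)<\Delta/2$.) Controlling this coupling carefully and absorbing its contribution into the $\frac{1}{(\Delta-q)^2}$ correction is the technical heart of the proof.
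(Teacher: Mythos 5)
There is a genuine gap: what you have written is a plan that defers exactly the parts that constitute the proof. The paper does not bound ``bad'' neighbors and average; it \emph{constructs} the pair $\{z,y\}$ from a coloring. One takes $\varphi\in\mathcal{C}^{\Delta}(G-x_1x_2)$ and calls $\{z,y\}$ feasible when $\varphi(x_1z)\in\bar{\varphi}(x_2)$ and $\varphi(x_2y)\in\bar{\varphi}(x_1)$; then, after showing that $\bar{\varphi}(x_1),\bar{\varphi}(x_2),\bar{\varphi}(y),\bar{\varphi}(z)$ and $\varphi(x_1)\cap\varphi(x_2)\cap\varphi(y)\cap\varphi(z)$ are mutually exclusive (this needs the Kierstead-path lemmas of Kierstead/Zhang and Kostochka--Stiebitz, and it is here, not in any candidate count, that $\delta(G)\ge\Delta/2$ enters: together with $d(x_1)+d(x_2)<\tfrac32\Delta$ it forces $d(x_1),d(x_2)<\Delta$), one gets the exact counting identity $\sigma_q(x_1,z)+\sigma_q(x_2,y)\ge 3\Delta-d(x_1)-d(x_2)-|C_y|-|C_z|-|T_0|$ for explicit color sets $C_y,C_z,T_0$. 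The two displayed error terms then come from proving $|C_y|,|C_z|<\frac{d(x_1)+d(x_2)-\Delta-2}{\Delta-q}$ (an elementary-set count) and, much harder, that some feasible coloring has $|T_0|\le\frac{4(d(x_1)+d(x_2)-\Delta)}{\Delta-q}+\frac{8(d(x_1)+d(x_2)-\Delta-2)}{(\Delta-q)^2}$; the latter occupies most of the paper's Section 4.2 (optimal colorings maximizing $|C_z|+|C_y|$, Claims B1--E1, and a terminal chain of seven recolorings). Your sentence ``criticality allows only finitely many such configurations'' carries none of this weight --- everything in a finite graph is finite --- and your attribution of the $\frac{1}{(\Delta-q)^2}$ term to ``coupling of Kempe swaps at $x_1$ and $x_2$'' does not correspond to anything one could verify.

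Your closing pigeonhole step is also not salvageable as stated: averaging over all pairs $(z,y)$ cannot give the bound, because for a typical neighbor pair the only available estimates are of Vizing-adjacency type, which yield roughly $2\Delta-d(x_1)-d(x_2)$ for the sum --- about $\Delta$ short of the target $3\Delta-d(x_1)-d(x_2)$. The lemma asserts existence of one good pair, and that pair is forced on you by the coloring (it is the pair realized by the feasible coloring optimized as above), not found among ``most'' pairs by counting exclusions. So the proposal correctly identifies the analogy with Lemma \ref{gen vizingfan} and correctly notes that Lemma \ref{gen vizingfan} cannot simply be applied twice, but the mechanism it offers in place of the paper's feasible-coloring and $T_0$-bounding machinery would not produce the stated inequality.
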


Our approaches are inspired by the recent development of Tashkinov tree technique for multigraphs.
Let $G$ be a multigraphs without loops, $e_1=y_0y_1\in E(G)$ and $\phiv\in C^k(G-e_1)$.
A Tashkinov tree $T$ with respect to $G, e, \phiv$ is an alternating sequence $T=(y_0,e_1,y_1,\cdots,e_p,y_p)$ with $p\geq 1$ consisting of edges $e_1, e_2, \cdots,e_p$ and vertices $y_0,y_1\cdots,y_p$ such that the following two conditions hold.
\begin{itemize}
\item The edges $e_1, e_2, \cdots,e_p$ are distinct and   $e_i=y_ry_{i}$ for each  $1\leq i\leq p$, where $r < i$; \\
\item For every edge $e_i$ with $2\leq i\leq p,$ there is a vertex $y_h$ with $0\leq h<i$ such that $\varphi(e_i)\in \bar\varphi(y_h).$
\end{itemize}
Clearly, a Tashkinov tree is indeed a tree of $G$. Tashkinov~\cite{Tashkinov-2000} proved that if $G$ is edge-$k$-critical  with $k \ge \D +1$, then $V(T)$ is elementary. In the above definition, if $e_i = y_0y_i$ for every $i$, i.e., $T$ is a star with $y_0$ as the center,  then $T$ is a Vizing fan. The classic result of Vizing~\cite{ss} show that for every Vizing fan $T$ the set $V(T)$ is elementary if $G$ is edge-$k$-critical for every $k \ge \D$, which  includes  edge-$\D$-critical graphs. In the definition of Tashkinov tree, if $e_i = y_{i-1}y_i$ for every $i$, i.e., $T$ is a path with end-vertices $y_0$ and $y_p$, then $T$ is a Kierstead path, which was introduced by Kierestead~\cite{Kierstead84}.  Kierstead proved that for every Kierstead path $P$ the set $V(P)$ is elementary if $G$ is an edge-$k$-critical with $k \ge \D +1$.  For simple graphs, following Kierstead's proof, Zhang~\cite{Zhang2000} noticed the following Lemma.
\begin{lem}\label{plong}{\em [Kierstead \cite{Kierstead84}, Zhang~\cite{Zhang2000}]}
Let $G$ be a graph with maximum degree $\Delta$ and ${\chi}'(G)= {\Delta}+1.$ Let $e_1\in E(G)$ be a critical edge. If $K=(y_0,e_1,y_1,\cdots,y_{p-1},e_p,y_p)$ is a Kierstead path with respect to $e_1$ and a coloring $\varphi\in\mathcal{C}^\Delta(G-e_1)$ such that $d(y_j)<\Delta$ for $j=2,\cdots,p$, then $V(K)$ is elementary with respect to $\varphi$.
\end{lem}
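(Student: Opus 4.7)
The plan is to proceed by induction on the length $p$ of the Kierstead path. For the base case $p=1$, the path is just the uncolored critical edge $e_1=y_0y_1$. If some color $\alpha$ were to lie in $\bar\varphi(y_0)\cap\bar\varphi(y_1)$, then assigning $\alpha$ to $e_1$ would extend $\varphi$ to a proper $\Delta$-edge-coloring of $G$, contradicting $\chi'(G)=\Delta+1$.

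For the inductive step, assume the conclusion for all Kierstead paths of length smaller than $p$. Then the prefix $K'=(y_0,e_1,y_1,\ldots,e_{p-1},y_{p-1})$ has elementary vertex set, so any failure of elementarity in $V(K)$ must involve $y_p$. Pick a color $\alpha\in\bar\varphi(y_i)\cap\bar\varphi(y_p)$ with $i<p$. The overall strategy is to apply a single Kempe-chain swap and derive a contradiction in one of two flavors: either the swap produces a proper $\Delta$-edge-coloring of $G$ (violating criticality of $e_1$), or it produces a strictly shorter Kierstead path whose vertex set inherits a shared missing color (violating the inductive hypothesis).

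Concretely, I would set $\beta=\varphi(e_p)$. By the Kierstead condition on $e_p$, there is some $h<p$ with $\beta\in\bar\varphi(y_h)$. Let $P=P_{y_p}(\alpha,\beta,\varphi)$ and $\varphi'=\varphi/P$. The case analysis splits on the far endpoint of $P$. If $P$ terminates at some $y_m\in V(K)\setminus\{y_p\}$, then $\alpha$ and $\beta$ swap roles at $y_m$ and $y_p$, and one checks that either $(y_0,e_1,\ldots,e_m,y_m)$ is a Kierstead path under $\varphi'$ that is not elementary, or else $K$ truncated before $y_p$ becomes a shorter non-elementary Kierstead path under $\varphi'$. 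If $P$ avoids $V(K)\setminus\{y_p,y_{p-1}\}$, then the swap preserves the Kierstead-path property of the prefix while moving $\alpha$ into the missing set of $y_{p-1}$; now $\alpha\in\bar{\varphi'}(y_{p-1})\cap\bar{\varphi'}(y_i)$, which again contradicts induction applied to the shorter prefix.

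The principal obstacle is the endpoints $y_0$ and $y_1$, whose degrees are permitted to equal $\Delta$. At such vertices the missing color set is thin (essentially just what is vacated by the uncolored edge $e_1$), so the Kempe chain $P$ may terminate there and disrupt both the ``shorten the path'' and ``extend to $G$'' strategies. Handling this case demands an extra move: combine the Kempe swap with a direct coloring of $e_1$ using a color freed at $y_0$ or $y_1$ by the swap. Verifying that one of the freed colors is simultaneously missing at both endpoints of $e_1$ — which is precisely the refinement noted by Zhang over Kierstead's original argument — is where the bulk of the bookkeeping lives, but the argument remains a single Kempe swap followed by local recoloring.
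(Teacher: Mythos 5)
The paper does not prove this lemma at all: it is quoted as a known result of Kierstead, in the simple-graph form observed by Zhang, so there is no internal proof to compare against. Judged on its own, your sketch has the right general shape (induction on $p$, a Kempe swap at the last vertex, dichotomy ``color $e_1$'' versus ``shorter non-elementary Kierstead path''), but the inductive step as written does not close. Take $\beta=\varphi(e_p)$ with $e_p=y_{p-1}y_p$; then $P=P_{y_p}(\alpha,\beta,\varphi)$ necessarily starts along $e_p$ and passes through $y_{p-1}$, and $y_{p-1}$ is an endpoint of $P$ only when $\alpha\in\bar\varphi(y_{p-1})$ already. In your second case ($P$ avoiding the rest of $V(K)$) the claim that the swap ``moves $\alpha$ into the missing set of $y_{p-1}$'' is therefore false in general: if $y_{p-1}$ is interior to $P$ its missing set is unchanged, and what one actually obtains is $\beta\in\bar\varphi'(y_p)\cap\bar\varphi'(y_h)$ with $K$ still a Kierstead path of the same length $p$ under $\varphi'$ --- no contradiction and no shortening, so the induction makes no progress. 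In your first case ($P$ ending at some $y_m\in V(K)$) you assert that a truncation of $K$ remains a Kierstead path under $\varphi'$, but the swap can recolor path edges $e_j$ (any edge of $K$ colored $\alpha$ or $\beta$ lying on $P$) and can pass through earlier vertices of $K$ without terminating there, destroying the defining condition that $\varphi'(e_j)$ is missing at an earlier vertex; and termination at $y_0$ or $y_1$ does not by itself yield a $\Delta$-coloring of $G$, since that requires producing a color missing at \emph{both} ends of $e_1$, which needs a separate fan-type argument.

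A second, related gap: the hypothesis $d(y_j)<\Delta$ for $j\ge 2$ --- the whole content of Zhang's adaptation to $k=\Delta$ colors --- is never actually invoked in your argument. With only $\Delta$ colors a vertex of degree $\Delta$ has empty missing set, so chain-endpoint and fan arguments at the interior and terminal vertices of $K$ break down unless this hypothesis is used explicitly (it guarantees $\bar\varphi(y_j)\neq\emptyset$ for $j\ge2$ and is needed precisely in the recoloring steps you defer to ``bookkeeping''). As it stands the proposal is a plausible strategy outline, not a proof; to repair it you must either control the chain so that it cannot meet $V(K)$ except at prescribed vertices (choosing $\alpha,\beta$ among colors missing at $y_0,y_1$ and at the last two vertices, as in Kierstead's and Zhang's arguments), or add a stability argument showing the Kierstead property survives the specific swaps you perform.
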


Kostochka and Stiebitz considered elementary property of Kierstead paths with four vertices and showed the following Lemma.

\begin{lem}\label{p4}{\em [Kostochka, Stiebitz~\cite{ss}]}
Let $G$ be a graph with maximum degree $\Delta$ and ${\chi}^\prime(G)= {\Delta}+1.$ Let $e_1\in E(G)$ be a critical edge and $\varphi\in\mathcal{C}^\Delta(G-e_1).$ If $K=(y_0,e_1,y_1,e_2,y_2,e_3,y_3)$ is a Kierstead path with respect to $e_1$ and $\varphi,$ then the following statements hold:
\begin{enumerate}
\item  $\bar{\varphi}(y_0)\cap \bar{\varphi}(y_1)=\emptyset;$
\item if $d(y_2)<\Delta,$ then $V(K)$ is elementary with respect to $\varphi;$
\item  if $d(y_1)<\Delta,$ then $V(K)$ is elementary with respect to $\varphi;$
\item  if $\Gamma=\bar{\varphi}(y_0)\cup \bar{\varphi}(y_1),$ then $|\bar{\varphi}(y_3)\cap \Gamma|\leq 1.$
\end{enumerate}
\end{lem}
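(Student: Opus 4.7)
The plan is to treat the four parts in order, using the non-extendability of $\varphi$ to $e_1$ together with Kempe chain swaps, iterated so that each proposed violation can be transformed into a contradiction of part (1) applied to a modified coloring.

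Part (1) is immediate: if a color $\alpha$ lay in $\bar\varphi(y_0) \cap \bar\varphi(y_1)$, then setting $\varphi(e_1) = \alpha$ would give an edge-$\Delta$-coloring of $G$, contradicting $\chi'(G) = \Delta + 1$.

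For parts (2) and (3), I would argue by contradiction. Write $\alpha_i = \varphi(e_i)$ for $i=2,3$; the Kierstead-path condition forces $\alpha_2 \in \bar\varphi(y_0)$ and $\alpha_3 \in \bar\varphi(y_0) \cup \bar\varphi(y_1)$, since $\alpha_i$ is seen at $y_{i-1}$. Suppose some pair among $V(K)$ shares a missing color $\beta$; by part (1) the pair involves $y_2$ or $y_3$. The strategy is to swap the $(\beta,\gamma)$-chain through the offending vertex, for a carefully chosen $\gamma \in \{\alpha_2,\alpha_3\} \cup \bar\varphi(y_0) \cup \bar\varphi(y_1)$, so that in the resulting coloring $\varphi'$ either (i) $\bar{\varphi'}(y_0) \cap \bar{\varphi'}(y_1) \neq \emptyset$, contradicting part (1) applied to $\varphi'$, or (ii) the shortened prefix $(y_0,e_1,y_1,e_2,y_2)$ becomes a Vizing fan centered at $y_1$ to which the classical fan lemma applies. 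The hypothesis $d(y_2)<\Delta$ in (2), respectively $d(y_1)<\Delta$ in (3), provides the extra free color at the pivotal vertex needed to keep the relevant Kempe chain from running into $y_0$ or $y_1$ uncontrollably; in part (3) the argument can alternatively be reorganized by reversing the roles of $y_0$ and $y_2$ so that $K$ becomes a short fan centered at a low-degree vertex.

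Part (4) is by contradiction on two distinct colors $\alpha,\beta \in \bar\varphi(y_3) \cap \Gamma$. Part (1) partitions $\Gamma$ as $\bar\varphi(y_0) \sqcup \bar\varphi(y_1)$, giving three cases according to whether both of $\alpha,\beta$ lie in $\bar\varphi(y_0)$, both in $\bar\varphi(y_1)$, or one in each. Since $y_3$ misses both colors, the $(\alpha,\beta)$-chain through $y_3$ is trivial, so one instead manipulates the $(\alpha,\beta)$-chain through $y_0$ or $y_1$. A suitable swap of that chain moves one of the two colors from $\bar\varphi(y_0)$ into $\bar\varphi(y_1)$ (or vice versa) while leaving the other in place, producing $\varphi'$ with $\bar{\varphi'}(y_0) \cap \bar{\varphi'}(y_1) \neq \emptyset$, contradicting part (1) applied to $\varphi'$. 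The main obstacle throughout is the interaction of the Kempe swap with the colors $\alpha_2,\alpha_3$ on $e_2,e_3$: a careless swap alters these colors and destroys the Kierstead-path structure of $K$ under $\varphi'$, invalidating the reapplication of earlier parts. The heart of the argument is the subcase bookkeeping that, under the stated degree and missing-color hypotheses, always selects a swap whose effect on $e_2$ and $e_3$ is benign — either leaving them unchanged or changing them in a way that still yields a valid Kierstead path.
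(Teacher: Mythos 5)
The paper does not prove this lemma at all; it is quoted from the monograph of Stiebitz, Scheide, Toft and Favrholdt \cite{ss}, so your attempt can only be judged on its own merits. Part (1) is correct, and so are your preliminary observations that $\varphi(e_2)\in\bar{\varphi}(y_0)$ and $\varphi(e_3)\in\bar{\varphi}(y_0)\cup\bar{\varphi}(y_1)$. Beyond that, however, what you have written is a strategy, not a proof: for (2)--(4) you promise a ``carefully chosen'' Kempe swap that either creates a common missing color at $y_0,y_1$ or reduces to the three-vertex fan case, and you explicitly defer the ``subcase bookkeeping'' controlling how the swap interacts with the colors of $e_2$ and $e_3$. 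That bookkeeping is not a routine afterthought; it is the entire content of the lemma, and nothing in the proposal identifies which chains are swapped, why the relevant chains must link the relevant vertices, or how the Kierstead-path structure is restored afterwards.

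Moreover, the one concrete mechanism you do describe fails in the main case of part (4). Suppose $\alpha\in\bar{\varphi}(y_0)$, $\beta\in\bar{\varphi}(y_1)$ and $\alpha,\beta\in\bar{\varphi}(y_3)$. Criticality forces $P_{y_0}(\alpha,\beta,\varphi)=P_{y_1}(\alpha,\beta,\varphi)$, since otherwise swapping the chain at $y_0$ makes $\beta$ missing at both ends of the uncolored edge $e_1$; and $y_3$ lies on no $(\alpha,\beta)$-edge because it misses both colors. Swapping the $y_0$--$y_1$ chain merely exchanges the roles of $\alpha$ and $\beta$ at its two ends: afterwards $y_0$ misses $\beta$ and $y_1$ misses $\alpha$, so $\bar{\varphi'}(y_0)\cap\bar{\varphi'}(y_1)$ is still empty and no contradiction with (1) appears, while swapping chains through $y_2$ or $y_3$ does not change $\bar{\varphi}(y_0)$ or $\bar{\varphi}(y_1)$ at all. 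Ruling out this configuration (and proving (2) and (3)) genuinely requires bringing the path edges into play --- e.g.\ uncoloring or shifting the colors $\varphi(e_2),\varphi(e_3)$ along $K$, recoloring $e_3$ with a color missing at $y_3$, and then analysing chains with respect to these colors, which is also where the hypotheses $d(y_2)<\Delta$ or $d(y_1)<\Delta$ actually enter. None of this appears in the proposal, so parts (2)--(4) remain unproven; as the paper relies on the published proof in \cite{ss}, you should either reproduce that argument or cite it rather than sketch a swap scheme whose key step does not go through.
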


In the definition of Tashkinov tree $T=(y_0, e_1, y_1, e_2, y_2, \cdots,  y_p)$.  We call $T$   a {\it broom} if $e_2=y_1y_2$ and for each $i \ge 3$, $e_i=y_2y_i$, i.e., $y_2$ is one of the end-vertices of $e_i$ for each $i \ge 3$. Moreover, we call a broom $T$ is a simple broom if $\phiv(e_i) \in \phibar(y_0)\cup\phibar(y_1)$ for each $i\ge 3$, i.e., $(y_0, e_1, y_1, e_2, y_2, e_i, y_i)$ is a Kierstead path. Chen, Chen and Zhao considered the elementary property  of simple brooms and gave the following Lemma.

\begin{lem}\label{broom}{\em [Chen, Chen, Zhao~\cite{cchen}]}
Let $G$ be an edge-$\D$-critical graph, $e_1=y_0y_1 \in E(G)$ and $\phiv \in\mathcal{C}^\Delta(G-e_1)$ and $B=\{y_0, e_1, y_1, e_2, y_2, \cdots, e_p,  y_p\}$ be a simple broom. If $|\bar{\varphi}(y_0)\cup \bar{\varphi}(y_1)|\geq 4$ and $\min\{d(y_1),d(y_2)\}< \Delta,$  then $V(B)$ is elementary with respect to $\varphi$.
\end{lem}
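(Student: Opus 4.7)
The plan is induction on the length $p$ of the broom. The base cases $p \leq 3$ reduce directly to Lemma~\ref{p4}: when $p \leq 2$ the broom is a Kierstead subpath, and when $p = 3$ it is a 4-vertex Kierstead path with $\min\{d(y_1),d(y_2)\} < \Delta$, so item (2) or (3) of Lemma~\ref{p4} applies.

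For $p \geq 4$, deleting any single leaf $y_j$ $(3 \leq j \leq p)$ together with its edge $e_j$ yields a simple broom of length $p-1$ meeting the same hypotheses, so $V(B) \setminus \{y_j\}$ is elementary by the inductive hypothesis. Taking $j = p$ gives elementarity on $V(B) \setminus \{y_p\}$, and applying Lemma~\ref{p4} to the Kierstead path $(y_0,e_1,y_1,e_2,y_2,e_p,y_p)$ handles the pairs $\{y_p, y_k\}$ with $k \in \{0,1,2\}$. All that remains is to verify $\bar{\varphi}(y_p) \cap \bar{\varphi}(y_i) = \emptyset$ for each $i \in \{3, \ldots, p-1\}$.

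Suppose for contradiction some color $\alpha$ lies in $\bar{\varphi}(y_p) \cap \bar{\varphi}(y_i)$; the preceding steps force $\alpha \notin \bar{\varphi}(y_0) \cup \bar{\varphi}(y_1) \cup \bar{\varphi}(y_2)$. Set $\alpha_p := \varphi(e_p) \in \bar{\varphi}(y_0) \cup \bar{\varphi}(y_1)$, say without loss of generality $\alpha_p \in \bar{\varphi}(y_0)$, and examine $P := P_{y_p}(\alpha, \alpha_p, \varphi)$, which begins $y_p, e_p, y_2, \ldots$. Since $y_0$ misses $\alpha_p$ it cannot be an interior vertex of any $(\alpha,\alpha_p)$-chain, so $y_0 \notin P$. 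Swap along $P$ to get $\varphi^*$; then $\bar{\varphi}^*(y_0) = \bar{\varphi}(y_0)$ and $\alpha_p \in \bar{\varphi}^*(y_p)$. If $y_i \in P$, then $y_i$ is the other endpoint of $P$, hence $\alpha_p \in \bar{\varphi}^*(y_i)$ as well; one checks that $(y_0,e_1,y_1,e_2,y_2,e_i,y_i)$ survives as a Kierstead path under $\varphi^*$ (the only broom edge changed by the swap is $e_p$, and $\varphi^*(e_i) = \varphi(e_i)$ still lies in $\bar{\varphi}^*(y_0) \cup \bar{\varphi}^*(y_1)$), so Lemma~\ref{p4} forces $\bar{\varphi}^*(y_0) \cap \bar{\varphi}^*(y_i) = \emptyset$, a contradiction. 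Otherwise $y_i \notin P$, in which case $\alpha \in \bar{\varphi}^*(y_i)$ while $y_p$ has lost $\alpha$ and now misses $\alpha_p$.

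The main obstacle is this second subcase: after the swap, $B$ is no longer a simple broom (because $\varphi^*(e_p) = \alpha \notin \bar{\varphi}^*(y_0) \cup \bar{\varphi}^*(y_1)$), so neither the induction hypothesis nor Lemma~\ref{p4} applies directly to $\varphi^*$. My plan is to exploit the assumption $|\bar{\varphi}(y_0) \cup \bar{\varphi}(y_1)| \geq 4$ to choose a secondary color $\beta \in \bar{\varphi}(y_0) \cup \bar{\varphi}(y_1)$ distinct from $\alpha_p$, $\varphi(e_i)$, and $\varphi(e_2)$, and then perform a carefully chosen $(\alpha,\beta)$-swap (through $y_0$ or $y_1$) to either reduce back to the favorable first subcase or produce a direct elementarity violation on a Kierstead subpath already controlled by Lemma~\ref{p4}. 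The real work is entirely in the bookkeeping — tracking how missing-color sets and broom-edge colors transform under successive Kempe swaps — and the lower bound $\geq 4$ is exactly what secures enough room to pick such a $\beta$ without colliding with colors already committed to the argument.
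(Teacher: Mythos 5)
You have not given a complete proof. The induction skeleton, the base cases, and the subcase in which $y_i$ lies on $P=P_{y_p}(\alpha,\alpha_p,\varphi)$ are fine (there the two endpoints of $P$ are $y_p$ and $y_i$, only $e_p$ among the broom edges can lie on $P$, and Lemma~\ref{p4} applies after the swap). But the other subcase -- $y_i\notin P$, so that after the swap $\varphi^*(e_p)=\alpha\notin\bar{\varphi}^*(y_0)\cup\bar{\varphi}^*(y_1)$ and the simple-broom/Kierstead structure is destroyed -- is precisely the crux of the lemma, and you leave it as a ``plan'': pick a secondary color $\beta$ and do a further $(\alpha,\beta)$-swap, with the actual argument deferred to ``bookkeeping.'' That deferred step is where the hypothesis $|\bar{\varphi}(y_0)\cup\bar{\varphi}(y_1)|\ge 4$ must do its work (it is used nowhere else in your sketch), and the proposed further swap faces exactly the same obstruction it is meant to cure: a Kempe change through $y_0$ or $y_1$ can again recolor broom edges and alter the missing sets at $y_0,y_1,y_2$ so that neither the induction hypothesis nor Lemma~\ref{p4} applies to the new coloring. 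Until that case is argued in full, the proof is an outline with its central difficulty missing, so this is a genuine gap. (Note also that the present paper does not prove this lemma; it quotes it from Chen--Chen--Zhao \cite{cchen}, so there is no in-paper proof to measure against -- the assessment above is of your argument on its own terms.)

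There is also a concrete misstep in the bookkeeping you do give: from ``$y_0$ misses $\alpha_p$, hence cannot be interior to an $(\alpha,\alpha_p)$-chain'' you conclude $y_0\notin P$, but $y_0$ can perfectly well be the \emph{second endpoint} of $P$; the usual argument forcing a chain to end at the other end of the uncolored edge needs $\alpha\in\bar{\varphi}(y_1)$ or $\alpha_p\in\bar{\varphi}(y_1)$, which fails here since $\alpha,\alpha_p\notin\bar{\varphi}(y_1)$. In the subcase $y_i\in P$ this is harmless, but in the unresolved subcase your assertion $\bar{\varphi}^*(y_0)=\bar{\varphi}(y_0)$ can fail (if $y_0$ is the far endpoint, $y_0$ loses $\alpha_p$ and gains $\alpha$), so even the setup for your intended continuation needs repair -- that particular sub-subcase can in fact be salvaged, since then $\alpha\in\bar{\varphi}^*(y_0)\cap\bar{\varphi}^*(y_i)$ and the path $(y_0,e_1,y_1,e_2,y_2,e_i,y_i)$ is still a Kierstead path for $\varphi^*$, but you would need to say so, and the genuinely hard case (far endpoint outside $V(B)$) remains unaddressed.
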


Brandt and Veldman gave the following result about the circumference of a graph.
\begin{lem}\label{lembrandt}\emph{[Brandt, Veldman \cite{bv}].}
Let $G\ne K_{1,n-1}$ be a graph of order $n$. If $d(x)+d(y)\ge n$ for any edge $xy$ of $G$, then the circumference of $G$ is $n-\max\{|S|-|N(S)|+1,0\}$, where $S$ is an independent set of $G$ with $S\cup N(S)\neq V(G)$.
\end{lem}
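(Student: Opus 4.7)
The plan is to prove the formula by establishing matching upper and lower bounds on the circumference $c(G)$. For the upper bound I fix any independent set $S$ with $S\cup N(S)\ne V(G)$ and any cycle $C$ of length $c$, and set $a=|S\cap V(C)|$, $b=|N(S)\cap V(C)|$, $d=|V(C)\setminus(S\cup N(S))|$, so $c=a+b+d$. Since $S$ is independent, both cycle-neighbours of every vertex of $S\cap V(C)$ lie in $N(S)\cap V(C)$, and a standard double-counting around $C$ gives $a\le b$. Combining this with $b\le |N(S)|$ and $d\le n-|S|-|N(S)|$ yields $c\le n-(|S|-|N(S)|)$. To sharpen the bound by one, I would argue that equality $c=n-(|S|-|N(S)|)$ forces $a=b=|N(S)|$ and $V(C)\supseteq V(G)\setminus S$; then any vertex $v\in V(G)\setminus(S\cup N(S))$ (which exists by hypothesis) has both of its cycle-neighbours in $N(S)\cup(V\setminus(S\cup N(S)))$, and chasing the consequences together with the edge-Ore condition $d(x)+d(y)\ge n$ at a suitable edge incident to $v$ yields a contradiction unless $G=K_{1,n-1}$.

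For the lower bound I take a longest cycle $C_0$ of length $c$ and let $R=V(G)\setminus V(C_0)$. A P\'osa-style rotation-extension argument, fuelled by the hypothesis $d(u)+d(v)\ge n$ for every edge $uv$, yields a large family of ``rotation endpoints'' on $C_0$ whose external neighbourhoods are tightly restricted. Bundling $R$ with a suitable selection of such rotation endpoints produces an independent set $S^{*}$ satisfying $S^{*}\cup N(S^{*})\ne V(G)$ and $|S^{*}|-|N(S^{*})|+1\ge |R|=n-c$, matching the upper bound; hence $c(G)=n-\max\{|S|-|N(S)|+1,0\}$ for the extremal $S$.

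The main obstacle I anticipate is the sharp $+1$ in the formula: the bare double-count gives only $c\le n-(|S|-|N(S)|)$, and the extra reduction by $1$ must be wrung out of the hypothesis $S\cup N(S)\ne V(G)$ combined with the edge-Ore condition, since neither alone suffices. A secondary difficulty is the explicit exclusion of $K_{1,n-1}$: a star satisfies $d(x)+d(y)=n$ on every edge yet contains no cycle, so it arises as a degenerate boundary case where the lower-bound construction breaks down. I would dispose of $K_{1,n-1}$ at the outset and then assume $G$ contains at least one cycle, so that $c(G)\ge 3$ and the rotation-extension machinery has an initial cycle to iterate upon.
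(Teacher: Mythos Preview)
The paper does not contain a proof of this lemma: it is quoted verbatim from Brandt and Veldman~\cite{bv} and invoked as a black box (only to derive Lemma~\ref{lem12}). Consequently there is nothing in the paper to compare your argument against.

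For what it is worth, your outline follows the natural shape of a Brandt--Veldman style argument: an upper bound on the circumference from a counting argument on any longest cycle versus an independent set $S$, and a matching lower bound obtained by applying rotation--extension to manufacture a suitable extremal $S^{*}$ from the vertices missed by a longest cycle. You have correctly identified the two genuine pressure points: (i) squeezing out the extra ``$+1$'' in the upper bound, which does require both the hypothesis $S\cup N(S)\ne V(G)$ and the edge-Ore condition (neither alone suffices), and (ii) handling the star $K_{1,n-1}$ as a degenerate exception. Your sketch of step~(i), however, is not yet a proof: the sentence ``chasing the consequences \dots\ yields a contradiction unless $G=K_{1,n-1}$'' hides exactly the part that needs work, and the rotation--extension construction of $S^{*}$ is likewise only gestured at. If you intend to supply a self-contained proof rather than cite \cite{bv}, those two passages must be made explicit.
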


Using Lemma \ref{lembrandt}, Chen, Chen and Zhao showed the following Lemma.

\begin{lem}\label{lem12}
\emph{[Chen, Chen, Zhao \cite{cchen}].} Let $G$ be an edge-$\Delta$-critical graph of order $n$. If $d(x)+d(y)\geq n$ for any edge $xy$ of $G,$ then $G$ is Hamiltonian.
\end{lem}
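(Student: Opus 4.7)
My plan is to argue by contradiction using the Brandt--Veldman circumference formula (Lemma \ref{lembrandt}). Since $\chi'(K_{1,n-1})=n-1=\Delta$ makes $K_{1,n-1}$ class I, the edge-$\Delta$-criticality of $G$ forces $G\neq K_{1,n-1}$, so Lemma \ref{lembrandt} applies. If $G$ is not Hamiltonian, then $c(G)<n$ and there exists an independent set $S$ with $S\cup N(S)\neq V(G)$ and $|N(S)|\leq|S|$. Set $T=N(S)$, $U=V(G)\setminus(S\cup T)$, and $s=|S|$, $t=|T|$, $u=|U|\geq 1$, so that $t\leq s$; I seek a contradiction.

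The first step extracts basic degree information from the edge condition. Because $G$ is connected and every vertex has a neighbor of degree at most $\Delta$, the hypothesis $d(x)+d(y)\geq n$ gives $\delta(G)\geq n-\Delta$. Since $S$ is independent, $N(x)\subseteq T$ for each $x\in S$, so $t\geq d(x)\geq n-\Delta$; combined with $s+t\leq n-1$ this yields $\Delta\geq(n+1)/2$. Consequently every vertex of $S$ has degree at most $t\leq(n-1)/2<\Delta$, and every vertex of $U$ has degree at most $t+u-1=n-s-1\leq\Delta-1$. Therefore every $\Delta$-vertex of $G$ lies in $T$; write $T'=\{y\in T:d(y)=\Delta\}$ and $k=|T'|$.

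The second step feeds this structure into Vizing's Adjacency Lemma (Lemma \ref{vizingaj}). For any edge $xy$ with $x\in S$ and $y\in T$, the $\Delta$-neighbors of $y$ lie in $T'$ and exclude $x$, so $|N(y)\cap T'|\geq\Delta-d(x)+1\geq\Delta-s+1$; in particular $k\geq\Delta-s+1$. Symmetrically, $|N(x)\cap T'|\geq\Delta-d(y)+1$ for each $x\in S$ and $y\in N(x)$. I would then double-count edges between $S$ and $T'$, and between $T\setminus T'$ and $T'$, together with the constraints $d(x)\geq n-\Delta$ for $x\in S$, $d(y)=\Delta$ for $y\in T'$, and $d(y)\geq n-d(x)$ for any edge $xy$ with $x\in S$, to force $t\geq s+1$, contradicting $t\leq s$.

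The main obstacle is this final counting step: the raw Vizing inequality only forces $s\geq(\Delta+1)/2$, and closing the arithmetic gap to $t\geq s+1$ requires careful bookkeeping of where each vertex's $\Delta$-neighbors must go. I anticipate a case split on whether $T=T'$ or $T\setminus T'\neq\emptyset$. In the first case, the rigid identity $\sum_{y\in T}d(y)=t\Delta$ conflicts with the capacity of the available destinations $S$, $T$, $U$ once $s=t$. In the second case, applying Vizing to an edge $xy$ with $y\in T\setminus T'$ (where $d(y)<\Delta$) yields a strict improvement on $|N(y)\cap T'|$, and summing over $y\in T\setminus T'$ exceeds the number of edges $T'$ can supply unless $k\geq s+1$. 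Either way, $t\geq s+1$, the desired contradiction.
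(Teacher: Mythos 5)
First, a remark on provenance: the paper does not actually prove Lemma \ref{lem12} --- it quotes it from \cite{cchen}, recording only that the known proof starts from Lemma \ref{lembrandt}. Your opening coincides with that route and is correct as far as it goes: excluding $K_{1,n-1}$, invoking Lemma \ref{lembrandt} to get an independent set $S$ with $t=|N(S)|\le |S|=s$ and $U=V(G)\setminus(S\cup N(S))\neq\emptyset$, deducing $\delta(G)\ge n-\Delta$, $\Delta\ge\frac{n+1}{2}$, and that every $\Delta$-vertex lies in $T=N(S)$. The problem is that everything after this is a plan, not a proof: the inequality $t\ge s+1$ \emph{is} the lemma, and you defer it to unspecified ``careful bookkeeping''. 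Worse, the two mechanisms you name do not withstand scrutiny. In the case $T=T'$ with $s=t$ there is no capacity conflict: all the counts your sketch puts on the table --- $e(S,T)=\sum_{x\in S}d(x)\ge s(\Delta-t+2)$, $e(S,T)\le t(t-1)$ (from Lemma \ref{vizingaj}, since $|N(y)\cap S|\le d(x)-1$ for any $S$-neighbour $x$ of $y$), and $\sum_{y\in T}d(y)=t\Delta\le t(t-1)+t(t-1)+u(t+u-1)$ --- are simultaneously satisfiable, e.g.\ with $s=t$, $u=1$, $\Delta$ near $2t-3$, so cardinality double counting between $S$, $T'$ and $T\setminus T'$ cannot by itself force $t\ge s+1$. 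In the case $T\setminus T'\neq\emptyset$, the claimed ``strict improvement on $|N(y)\cap T'|$'' when $d(y)<\Delta$ misreads Lemma \ref{vizingaj}: the bound on the number of $\Delta$-neighbours of $y$ is $\Delta-d(x)+1$, which does not depend on $d(y)$; a small $d(y)$ improves the count of $\Delta$-neighbours of $x$, i.e.\ $|N(x)\cap T'|$, not $|N(y)\cap T'|$.

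The missing step can in fact be closed from exactly the facts you established, but with a normalized count rather than the edge counts you propose. For every edge $xy$ with $x\in S$, $y\in T$, Lemma \ref{vizingaj} together with ``all $\Delta$-vertices lie in $T$'' gives $|N(y)\cap S|\le d(y)-(\Delta-d(x)+1)\le d(x)-1$, since $d(y)\le\Delta$. Now evaluate $\sum 1/d(x)$ over all edges between $S$ and $T$ in two ways: because $N(x)\subseteq T$ and $d(x)\ge 1$ for each $x\in S$, the sum equals $s$; on the other hand each edge contributes $1/d(x)\le 1/(|N(y)\cap S|+1)$, so the sum is at most $\sum_{y\in T}|N(y)\cap S|/(|N(y)\cap S|+1)<t$ (strict, since at least one such edge exists). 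Hence $s<t$, contradicting $t\le s$, with no case split on $T'$ and no use of $\sum_{y\in T}d(y)$. As submitted, however, your argument stops short of any such inequality, so it does not yet prove the lemma.
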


The Bondy-Chv\'{a}tal closure $C(G)$ of a graph $G$ with order $n$ defined by Bondy and Chv\'{a}tal \cite{bm} is the maximal graph obtained from $G$ by consecutively adding the edges $xy$ if the degree sum of $x$ and $y$ is  at least $n$. They proved that $C(G)$ is well-defined and $C(G)$ is Hamiltonian if and only if $G$ is Hamiltonian. Brandt and Veldman gave the following result about the circumference of a graph $G$ and its closure $C(G).$

\begin{lem} \label{lem13} \emph{[Brandt, Veldman \cite{bv}].}
A graph $G$ has the same circumference as its closure $C(G).$
\end{lem}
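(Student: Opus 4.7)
The plan is to reduce the lemma to its one-edge version and then apply a standard Ore-type rotation, using the maximality of a longest cycle in the enlarged graph to control how many neighbors of the pair $\{u,v\}$ can lie outside it. Since $C(G)$ is built from $G$ by iteratively adding an edge $uv$ whenever $uv\notin E(H)$ and $d_H(u)+d_H(v)\ge n$ in the current graph $H$, it suffices to show that each such one-edge addition preserves the circumference; the lemma then follows by induction on the number of added edges.

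For the one-edge step, set $H'=H+uv$. The inequality $\operatorname{circ}(H)\le\operatorname{circ}(H')$ is trivial. For the reverse, let $C$ be a longest cycle in $H'$ and write $c=|V(C)|$. If $uv\notin E(C)$, then $C\subseteq H$ and we are done, so assume $uv\in E(C)$; then $P:=C-uv$ is a $u$-$v$ path $u=v_1,v_2,\ldots,v_c=v$ in $H$, and we need to produce a cycle of length $c$ in $H$ on the vertex set $V(P)$. Following the classical rotation, define
\[
A=\{\,i:1\le i\le c-1,\ uv_{i+1}\in E(H)\,\},\qquad B=\{\,i:1\le i\le c-1,\ v_iv\in E(H)\,\}.
\]
Any $i\in A\cap B$ yields the cycle $u,v_{i+1},v_{i+2},\ldots,v_c,v_i,v_{i-1},\ldots,v_2,u$ of length $c$ in $H$. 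Since $A,B\subseteq\{1,\ldots,c-1\}$ and $uv\notin E(H)$, it is enough to prove $|A|+|B|\ge c$, equivalently $|N_H(u)\cap V(P)|+|N_H(v)\cap V(P)|\ge c$.

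The heart of the argument, and the only place where the choice of $C$ really bites, is a bound on the neighbors of $u$ and $v$ lying off $P$. Put $U=N_H(u)\setminus V(P)$ and $W=N_H(v)\setminus V(P)$. If some $w$ lay in $U\cap W$, then $w,u,v_2,\ldots,v_c,w$ would be a cycle of length $c+1$ in $H\subseteq H'$, contradicting the maximality of $C$. Hence $U\cap W=\emptyset$, and since $U\cup W\subseteq V(H)\setminus V(P)$ we get $|U|+|W|\le n-c$. Combining with the hypothesis $d_H(u)+d_H(v)\ge n$ gives
\[
|N_H(u)\cap V(P)|+|N_H(v)\cap V(P)|=d_H(u)+d_H(v)-|U|-|W|\ge n-(n-c)=c,
\]
and pigeonholing $A\cup B\subseteq\{1,\ldots,c-1\}$ then forces $A\cap B\ne\emptyset$, yielding the desired length-$c$ cycle in $H$. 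The main obstacle is essentially this single maximality-versus-degree tradeoff; everything else is routine Bondy--Chv\'atal bookkeeping and requires no tools beyond what is already recalled in the excerpt.
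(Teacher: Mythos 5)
Your argument is correct. One small point you gloss over: "any $i\in A\cap B$ yields the cycle" needs $2\le i\le c-2$ for the cycle to be non-degenerate, but this is automatic, since $1\in B$ or $c-1\in A$ would force $uv\in E(H)$, contrary to assumption; with that observation the one-edge step ($U\cap W=\emptyset$ by maximality of $C$, hence $|A|+|B|\ge c$, hence $A\cap B\ne\emptyset$ by pigeonhole) and the induction over the edge-addition sequence are sound. There is nothing in the paper to compare against: the authors use Lemma \ref{lem13} as a black box, citing Brandt and Veldman \cite{bv}, and give no proof of it. Your write-up is essentially the standard Bondy--Chv\'atal stability argument adapted from Hamiltonicity to longest cycles (the key extra ingredient being that $C$ is chosen \emph{longest} in $H+uv$, which is exactly what rules out a common neighbor of $u$ and $v$ off the path), and as such it is a legitimate self-contained substitute for the citation.
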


\begin{lem}\label{lem14} \emph{[Chen, Ellingham, Saito, Song \cite{ce}]}.
Let $G$ be a bipartite graph with partite sets $X$ and $Y$. If for every $S\subseteq X$, $|N_G(S)|\ge \frac{3}{2}|S|$, then $G$ has a subgraph $H$ covering $X$  such that for every $x\in X$, $d_H(x)=2$ and for every $y\in Y$, $d_H(y)\le 2$.
\end{lem}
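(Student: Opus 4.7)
My plan is to prove Lemma \ref{lem14} via a max-flow / min-cut argument, which has the advantage of producing an integer (hence simple) subgraph $H$ automatically. I build an auxiliary network with source $s$, sink $t$, and intermediate vertex classes $X$ and $Y$: for each $x \in X$ add an arc $sx$ of capacity $2$, for each edge $xy \in E(G)$ add an arc $xy$ of capacity $1$, and for each $y \in Y$ add an arc $yt$ of capacity $2$. An integer maximum flow of value $2|X|$ must saturate every arc $sx$, so if I set $H$ to be the set of edges $xy \in E(G)$ whose middle arc carries one unit of flow, then $d_H(x)=2$ for every $x \in X$ and $d_H(y)\le 2$ for every $y \in Y$; the unit capacity on each middle arc ensures $H$ is a genuine simple subgraph.

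To show the maximum flow equals $2|X|$ I invoke max-flow / min-cut. For a cut whose source side is $\{s\}\cup A\cup B$ with $A\subseteq X$ and $B\subseteq Y$, the capacity is
\[
2(|X|-|A|)\;+\;e_G(A, Y\setminus B)\;+\;2|B|,
\]
where $e_G(A,Y\setminus B)$ denotes the number of edges of $G$ joining $A$ to $Y\setminus B$. Hence it suffices to prove, for all such $A$ and $B$, the inequality $e_G(A,Y\setminus B)+2|B|\ge 2|A|$.

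The main (though mild) obstacle is converting this edge inequality into an application of the expansion hypothesis, which concerns only $|N_G(\cdot)|$. I plan to partition $A$ according to how many neighbours lie outside $B$:
\[
A_0=\{a\in A:N_G(a)\subseteq B\},\quad A_1=\{a\in A:|N_G(a)\setminus B|=1\},\quad A_{\ge 2}=A\setminus(A_0\cup A_1).
\]
Counting edges gives $e_G(A,Y\setminus B)\ge |A_1|+2|A_{\ge 2}|$, so the required inequality reduces to $2|B|\ge 2|A_0|+|A_1|$. Since every vertex of $A_0\cup A_1$ has at most one neighbour outside $B$, the union $N_G(A_0\cup A_1)\setminus B$ contains at most $|A_1|$ elements, and therefore $|N_G(A_0\cup A_1)|\le |B|+|A_1|$. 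Combining this with the hypothesis $|N_G(A_0\cup A_1)|\ge \frac{3}{2}(|A_0|+|A_1|)$ produces $|B|\ge \frac{3}{2}|A_0|+\frac{1}{2}|A_1|$, which after doubling gives $2|B|\ge 3|A_0|+|A_1|\ge 2|A_0|+|A_1|$, completing the argument.
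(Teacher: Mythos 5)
Your argument is correct, and there is nothing in the paper to compare it against: the paper only quotes this lemma from the reference of Chen, Ellingham, Saito and Shan and gives no proof, so your flow proof is a self-contained substitute rather than a variant of an argument in the text. The network you build (capacity $2$ on $sx$ and $yt$, capacity $1$ on the edge arcs) is the natural one, the cut capacity formula $2(|X|-|A|)+e_G(A,Y\setminus B)+2|B|$ is right, and the reduction to $2|B|\ge 2|A_0|+|A_1|$ via the partition $A_0,A_1,A_{\ge 2}$ together with $|N_G(A_0\cup A_1)|\le |B|+|A_1|$ and the hypothesis applied to $S=A_0\cup A_1$ closes the estimate; integrality of the maximum flow then yields $H$ with $d_H(x)=2$ and $d_H(y)\le 2$. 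Two features of your write-up are worth keeping explicit: the unit capacities on the edge arcs are what force $H$ to be a subgraph of the simple graph $G$ (a Hall-type argument on the bipartite graph with each vertex doubled would only produce a degree-2 "submultigraph", possibly using an edge $xy$ twice, and that is exactly where an expansion constant below $\frac{3}{2}$ would fail, e.g.\ for a single edge); and the trivial cut $\{s\}$ shows the maximum flow is exactly $2|X|$, so every arc $sx$ is saturated, giving $d_H(x)=2$ rather than $\le 2$. With those points stated, the proof is complete and arguably more elementary and more checkable than an appeal to the cited paper.
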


\begin{lem}\label{lem15}\emph{[Luo, Zhao \cite{lz}]}.
 An edge-$\D$-critical graph with at most 10 vertices is Hamiltonian.
\end{lem}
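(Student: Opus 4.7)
The plan is to reduce the statement to a finite case analysis in $(n,\Delta)$ with $n \le 10$, and then dispose of each case either by direct construction or by appealing to Ore-type Hamiltonicity (Lemmas \ref{lem12} and \ref{lem13}) combined with Vizing's Adjacency Lemma (Lemma \ref{vizingaj}).

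First I would stratify by $\Delta$. When $\Delta = 2$, connectedness together with $\chi'(G) = \Delta+1 = 3$ forces $G$ to be an odd cycle on $n \le 10$ vertices, which is trivially Hamiltonian. When $\Delta$ is large, say $\Delta \ge \lceil n/2 \rceil$, Lemma \ref{vizingaj} shows that each edge $xy$ has many $\Delta$-neighbors nearby, so (since $n \le 10$ is small) every non-adjacent pair $u,v$ can be shown to satisfy $d(u) + d(v) \ge n$. The Bondy--Chv\'atal closure of $G$ is then $K_n$, and Lemmas \ref{lem13} and \ref{lem12} together deliver a Hamilton cycle.

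The remaining middle band $3 \le \Delta < n/2$ is the substantive case. Here Vizing's Adjacency Lemma forces every neighbor $y$ of a low-degree vertex $x$ to have at least $\Delta - d(x) + 1$ neighbors of maximum degree (other than $x$); iterating this constraint together with the parity of $\sum_v d(v)$ and $|V(G)| \le 10$ dramatically restricts the feasible degree sequences. For each surviving degree sequence, the Kierstead-path and broom lemmas (Lemmas \ref{plong}, \ref{p4}, \ref{broom}) constrain the adjacency structure enough to pin $G$ down up to isomorphism, and a Hamilton cycle is then exhibited directly in each resulting graph.

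The main obstacle is confirming that this finite enumeration is complete --- that no edge-$\Delta$-critical graph on at most ten vertices has been missed. This is a careful bookkeeping step organized by the pair $(n,\Delta)$ and leans heavily on the restrictive VAL-type constraints linking low-degree vertices to $\Delta$-vertices; in practice it is handled by a methodical hand-check along the lines of the original Luo--Zhao argument, or verified independently by a short computer search over the (small) list of candidate graphs that survive the structural tests.
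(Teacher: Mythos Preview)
The paper does not prove this lemma at all: it is quoted verbatim as a result of Luo and Zhao \cite{lz} and used as a black box to dispose of the case $n\le 10$ at the very start of Section~3. So there is no ``paper's own proof'' to compare your proposal against.

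As a standalone argument, your proposal has two genuine gaps. First, the large-$\Delta$ step is not justified: Vizing's Adjacency Lemma (Lemma~\ref{vizingaj}) only constrains the neighbourhood of a vertex \emph{via} an incident edge; it says nothing directly about a pair of \emph{non-adjacent} vertices, so the assertion that $\Delta\ge\lceil n/2\rceil$ forces $d(u)+d(v)\ge n$ for every non-adjacent pair $u,v$ needs an actual argument (and for, say, $n=10$, $\Delta=5$ it is not obvious why two non-adjacent vertices of degree $4$ are excluded). Invoking Lemma~\ref{lem12} here is also off target, since that lemma hypothesises a degree-sum condition on \emph{edges}, not on non-adjacent pairs; if the closure really were $K_n$ you would only need Lemma~\ref{lem13}. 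Second, the ``middle band'' $3\le\Delta<n/2$ is precisely where the work lies, and your proposal ultimately defers it to ``a methodical hand-check along the lines of the original Luo--Zhao argument, or \dots\ a short computer search''. That is a citation, not a proof: the Kierstead-path and broom lemmas you list restrict colourings, not adjacency structure directly, and you have not shown how they ``pin $G$ down up to isomorphism''. If you intend an independent proof you must actually carry out (or at least make verifiable) the enumeration; otherwise you are simply re-citing \cite{lz}, which is exactly what the paper does.
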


%%%%%%%%%%%%%%%%%%%%%%%%%%%%%%%%%%%%%%%%%##############################

\section{ Proof of Theorem \ref{thml}}

By Lemma \ref{lem15}, we can assume that $n\ge 11$ and thus $\D\ge 20$. Suppose, on the contrary, there exists a non-Hamiltonian $\Delta$-critical graph $G$ of order $n$ with $\Delta\geq \frac{2}{3}n+12$.
If $d(x)+d(y)\geq n$ for any edge $xy\in E(G)$, then by Lemma \ref{lem12}, $G$ is Hamiltonian, a contradiction with the assumption. So
there exists an edge of $G$ with degree sum less than or equal to  $n-1$.
We will show  $\delta(G)\ge \frac{\D}{2}$ with a sequence of claims.

 \begin{cla} \label{clm1}
If $\delta(G)<\frac{\Delta}{2}$, then the followings hold:
\begin{itemize}
\item[{\bf a.}] $|V_{\ge \Delta-18}(G)|\ge \frac{107}{162}\Delta$;
\item [{\bf b.}] $|V_{\ge \Delta-18}(G)|\ge \frac{n}{2}$ provided $\D\le 100$;

\item [{\bf c.}] $|V_{\ge (1-\frac{13}{81})\Delta}(G)|\ge \frac{n}{2}$ provided $\D\ge 101$.
\end{itemize}
\end{cla}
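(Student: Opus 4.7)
The plan is to apply Lemma \ref{gen vizingfan} once to a carefully chosen edge and extract all three bounds. Since $\delta(G)<\Delta/2$, pick $x\in V(G)$ with $d(x)=\delta(G)$ and any $y\in N(x)$ (which exists as $\delta\ge 2$ in any edge-$\Delta$-critical graph, a direct consequence of Vizing's adjacency lemma). For any $q\le\Delta-10$, Lemma \ref{gen vizingfan} produces $z\in N(x)\setminus\{y\}$ satisfying
\[
\sigma_q(x,y)+\sigma_q(x,z)>2\Delta-d(x)-\frac{6d(x)-8}{\Delta-q}-\frac{8(d(x)-2)}{(\Delta-q)^2}.
\]
Since every vertex counted by $\sigma_q(x,y)$ or $\sigma_q(x,z)$ lies in $V_{\ge q}(G)$,
\[
|V_{\ge q}(G)|\ \ge\ \max\{\sigma_q(x,y),\sigma_q(x,z)\}\ \ge\ \tfrac{1}{2}\bigl(\sigma_q(x,y)+\sigma_q(x,z)\bigr).
\]
The three parts will follow by choosing $q$ appropriately and invoking the integer bound $d(x)\le(\Delta-1)/2$ forced by $d(x)<\Delta/2$.

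For parts (a) and (b), take $q=\Delta-18$. Substituting and simplifying algebraically gives $\sigma_q(x,y)+\sigma_q(x,z)>\tfrac{107\Delta+95}{81}$, so $|V_{\ge \Delta-18}(G)|>\tfrac{107\Delta+95}{162}>\tfrac{107}{162}\Delta$, which is (a). For (b), combine with $n/2\le\tfrac{3\Delta-36}{4}$ (from $\Delta\ge\tfrac{2n}{3}+12$): the inequality $\tfrac{107\Delta+95}{162}\ge\tfrac{3\Delta-36}{4}$ holds iff $\Delta\le 107$, so (b) holds for $\Delta\le 100$.

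The crucial observation for part (c) is that $q$ in Lemma \ref{gen vizingfan} is allowed to be any positive \emph{real} number. Take $q=(1-\tfrac{13}{81})\Delta$ directly, so $\Delta-q=\tfrac{13}{81}\Delta>16$ for $\Delta\ge 101$ (and hence $q\le\Delta-10$). Substituting and using $d(x)\le(\Delta-1)/2$, the total correction
\[
\frac{3\Delta-11}{\Delta-q}+\frac{4\Delta-20}{(\Delta-q)^2}=\frac{243}{13}+O(1/\Delta)
\]
is bounded above by $19.5$ for all $\Delta\ge 101$ (tightest at $\Delta=101$, where it is $\approx 19.48$). Consequently $\sigma_q(x,y)+\sigma_q(x,z)>\tfrac{3\Delta+1}{2}-19.5\ge\tfrac{3\Delta-38}{2}\ge n-1$, so by integrality $\sigma_q(x,y)+\sigma_q(x,z)\ge n$ and therefore $|V_{\ge (1-13/81)\Delta}(G)|\ge\lceil n/2\rceil\ge n/2$.

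The main obstacle is the numerical tightness in (c) at the boundary $\Delta=101$, where the margin is only $\approx 0.03$. Using the real-valued $q=(1-\tfrac{13}{81})\Delta$ rather than its integer ceiling $\lceil (1-\tfrac{13}{81})\Delta\rceil$ is essential: rounding $q$ upward would shrink $\Delta-q$ (e.g.\ from $16.21$ down to $16$ at $\Delta=101$) and push the correction past $19.5$, invalidating the argument at the boundary. Beyond this single delicate choice, the whole proof is one clean application of Lemma \ref{gen vizingfan} combined with careful tracking of integrality to promote strict inequalities to $\ge n$.
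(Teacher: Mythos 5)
Your proposal is correct and follows essentially the same route as the paper: one application of Lemma \ref{gen vizingfan} with $d(x)=\delta(G)\le\frac{\Delta-1}{2}$, taking $q=\Delta-18$ for parts (a)--(b) and the real value $q=(1-\frac{13}{81})\Delta$ for part (c), then finishing with $\Delta\ge\frac{2n}{3}+12$ and an integrality argument. The only cosmetic difference is that the paper first halves the bound to get a single $\sigma_q(x,y)$ and applies integrality to $|V_{\ge q}(G)|$ (via the slack $-0.48761$), whereas you keep the sum, round it up to $n$, and then halve; the numerics check out in both versions.
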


\begin{proof} Let $x\in V(G)$ with $d(x)=\delta(G)$, and let $q$ be a positive number with $q\leq \Delta-10$.
By Lemma \ref{gen vizingfan}, there exists a vertex $y\in N(x)$ such that
\begin{equation}\label{thmeq1}
\sigma_q(x,y)> \Delta-\frac{d(x)}{2}-\frac{3d(x)-4}{\Delta-q}-\frac{4(d(x)-2)}{(\Delta-q)^2}.
\end{equation}

Plugging $d(x)\le \frac{\Delta-1}{2}$ and $q=\Delta-18$, we get \textbf{a} by the following inequalities.
\begin{equation*}
|V_{\ge \Delta-18}(G)|\ge \sigma_{\D-18}(x,y)
>\frac{3\Delta+1}{4}-\frac{3\Delta-11}{36}-\frac{2\Delta-10}{18^2} >\frac{107}{162}\Delta
\end{equation*}
When $\Delta\leq 100$, we have $\frac{107}{162}\Delta>\frac{3}{4}(\D-12)\ge \frac{n}{2}$ since $\Delta\geq \frac{2}{3}n+12$. Thus \textbf{b} holds.

Suppose $\Delta\geq 101$. Let $q=(1-\frac{13}{81})\Delta$. In this case $q<\Delta-10$.
By (\ref{thmeq1}), we have
\begin{eqnarray*}
|V_{\ge (1-\frac{13}{81})\Delta}(G)|&\ge& \sigma_{(1-\frac{13}{81})\Delta}(x,y)\\
&>&\frac{3\Delta+1}{4}-\frac{81(3\Delta-11)}{26\Delta}-81^2\frac{2\Delta-10}{(13\Delta)^2}\\
&\ge& \frac{n}{2}+9+\frac{1}{4}-\frac{243}{26}
+(\frac{81\times 11}{26\D}-\frac{81^2\times 2}{13^2\D})+\frac{81^2\times 10}{13^2\D^2}\\
&>&
\frac{n}{2}-0.0962-\frac{43.376}{\Delta}+\frac{388.22}{\Delta^2}>\frac{n}{2}-0.48761.
\end{eqnarray*}
Since $|V_{\ge (1-\frac{13}{81})\Delta}(G)|$ is an integer, we have $|V_{\ge (1-\frac{13}{81})\Delta}(G)|\geq\frac{n}{2}$.
So we have \textbf{c}.
\end{proof}

\begin{cla}\label{clm5}
If $\delta(G)\geq \frac{\Delta}{2}$,
then the followings hold:
\begin{itemize}
\item[{\bf a.}] $|V_{\ge \Delta-18}(G)|\ge \frac{107}{162}\Delta$;
\item [{\bf b.}] $|V_{\ge \Delta-18}(G)|\ge \frac{n}{2}$ provided $\D\le 100$;

\item [{\bf c.}] $|V_{\ge (1-\frac{13}{81})\Delta}(G)|\ge \frac{n}{2}$ provided $\D\ge 101$.
\end{itemize}
\end{cla}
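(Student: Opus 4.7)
The plan is to mirror the proof of Claim \ref{clm1}, substituting Lemma \ref{gen broom} for Lemma \ref{gen vizingfan}: where Claim \ref{clm1} uses a minimum-degree vertex (available because $\delta(G)<\Delta/2$), here the hypothesis $\delta(G)\ge\Delta/2$ is exactly what Lemma \ref{gen broom} needs, so I would instead start from a low-degree-sum edge. Specifically, since $G$ is non-Hamiltonian, Lemma \ref{lem12} produces an edge $x_1x_2$ with $d(x_1)+d(x_2)\le n-1$, and combined with $\Delta\ge\frac{2n}{3}+12$ (so $n\le\frac{3}{2}\Delta-18$) and $\delta(G)\ge\Delta/2$, the value $s:=d(x_1)+d(x_2)$ satisfies $\Delta\le s\le\frac{3}{2}\Delta-19<\frac{3}{2}\Delta$. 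For any $q\le\Delta-10$, Lemma \ref{gen broom} then furnishes $z\in N(x_1)\setminus\{x_2\}$ and $y\in N(x_2)\setminus\{x_1,z\}$ with
$$\sigma_q(x_1,z)+\sigma_q(x_2,y)>3\Delta-s-\frac{6(s-\Delta)-4}{\Delta-q}-\frac{8(s-\Delta-2)}{(\Delta-q)^2}.$$
Since both $\{w\in N(z)\setminus\{x_1\}:d(w)\ge q\}$ and $\{w\in N(y)\setminus\{x_2\}:d(w)\ge q\}$ are subsets of $V_{\ge q}(G)$, one obtains $|V_{\ge q}(G)|\ge\max\{\sigma_q(x_1,z),\sigma_q(x_2,y)\}\ge\frac{1}{2}(\sigma_q(x_1,z)+\sigma_q(x_2,y))$, and the right-hand side above is decreasing in $s$ on $[\Delta,\frac{3}{2}\Delta-19]$, so I need only evaluate at the extremum $s=\frac{3}{2}\Delta-19$.

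For (a), I will take $q=\Delta-18$; substituting $s=\frac{3}{2}\Delta-19$ and $\Delta-q=18$ yields, after routine arithmetic, a sum exceeding $\frac{107}{81}\Delta+\frac{704}{27}$, so $|V_{\ge\Delta-18}(G)|>\frac{107}{162}\Delta+13>\frac{107}{162}\Delta$. Part (b) then follows exactly as in Claim \ref{clm1}(b): for $\Delta\le100$ one has $\frac{107}{162}\Delta>\frac{3}{4}(\Delta-12)\ge n/2$. For (c), I will take $q=(1-\frac{13}{81})\Delta$, which is admissible because $\Delta\ge101$ forces $\Delta-q>10$; halving the Lemma \ref{gen broom} bound at $s=\frac{3}{2}\Delta-19$ and comparing with $n/2\le\frac{3}{4}\Delta-9$ leaves positive slack by the same arithmetic as in Claim \ref{clm1}(c), with an extra $\frac{19}{2}$ in the constant term that comfortably absorbs any integer rounding.

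There is no new combinatorial obstacle; the only work is arithmetic bookkeeping, in particular verifying that halving the Lemma \ref{gen broom} bound does not cost more than the parameters can afford. Structurally, halving gives a leading term $\frac{3}{4}\Delta+\frac{19}{2}-O((\Delta-q)^{-1})$, which is slightly stronger than Claim \ref{clm1}'s $\frac{3\Delta+1}{4}-O((\Delta-q)^{-1})$, so the thresholds $\Delta\le100$ and $\Delta\ge101$ separating (b) from (c), as well as the choices of $q$, transfer verbatim from Claim \ref{clm1} without adjustment.
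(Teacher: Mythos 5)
Your proposal is correct and follows essentially the same route as the paper: take the edge of degree sum at most $n-1$ guaranteed by the non-Hamiltonicity assumption (via Lemma \ref{lem12}), apply Lemma \ref{gen broom} with $q=\Delta-18$ (for $\Delta\le 100$) and $q=(1-\frac{13}{81})\Delta$ (for $\Delta\ge 101$), and take half of the resulting sum bound; your worst-case substitution $s=\frac{3}{2}\Delta-19$ is just the paper's substitution $n\le\frac{3}{2}(\Delta-12)$ in disguise, and the arithmetic ($\frac{107}{162}\Delta$ plus a positive constant, and slack above $\frac{n}{2}$ from the $\frac{3\Delta+38}{4}$ leading term) matches the paper's computations.
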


\begin{proof}
Note that there exists  an edge of $G$ with degree sum less than or equal to $n-1$.  Let $xy\in E(G)$ such that $d(x)+d(y)\leq n-1<\frac{3}{2}\D$. For any $q\leq \Delta-10$, applying Lemma \ref{gen broom}, there exists a pair of adjacent vertices $\{u,v\}$, where $u\in \{x,y\}$ and $v\notin \{x,y\}$, such that
\begin{eqnarray}\label{thmeq2}
% \nonumber to remove numbering (before each equation)
\sigma_q(u,v) &>&  \frac{3\Delta -d(x)-d(y)}{2}-\frac{3(d(x)+d(y)-\Delta)-2}{\Delta-q}-\frac{4(d(x)+d(y)-\Delta-2)}{(\Delta-q)^2}\nonumber\\
&>& \frac{3\Delta -n+1}{2}-\frac{3(n-\Delta)-5}{\Delta-q}-\frac{4(n-\Delta-3)}{(\Delta-q)^2}.
\end{eqnarray}

Plugging $q=\Delta-18$ and $n\leq \frac{3}{2}(\D-12)$ in (\ref{thmeq2}), we get \textbf{a} by the following inequalities.
$$
|V_{\ge \Delta-18}(G)|\ge \sigma_{\Delta-18}(u,v)> \frac{3\Delta+38
 }{4}-\frac{3\Delta-118}{36}-\frac{2\Delta-84}{18^2}>\frac{107}{162}\Delta.
$$
When $\Delta\leq 100$, we have $\frac{107}{162}\Delta>\frac{3}{4}(\D-12)\ge \frac{n}{2}$ since $\Delta\geq \frac{2}{3}n+12$. Thus \textbf{b} holds.

Suppose $\Delta\geq 101$. Let $q=(1-\frac{13}{81})\Delta$. In this case $q<\Delta-10$.
By (\ref{thmeq2}), we have
\begin{eqnarray*}
|V_{\ge (1-\frac{13}{81})\Delta}(G)|\ge \sigma_{(1-\frac{13}{81})\Delta}(x,y)
&>& \frac{3\Delta+38
 }{4}-81\frac{3\Delta-118}{26\Delta}-81^2\frac{2\Delta-84}{(13\Delta)^2}\\
&>& \frac{n}{2}+9+\frac{38}{4}-\frac{243}{26}+\frac{49005}{13^2\Delta}+\frac{81^2\times84}{(13\Delta)^2}> \frac{n}{2}.
\end{eqnarray*}
So we have \textbf{c}.
\end{proof}

Recall that $C(G)$ is the closure obtained from $G$ by consecutively adding edges between vertices whose degree sum greater than or equal to $n$. We have the following claim.

\begin{cla}\label{clm2}
In $C(G)$, $V_{\geq \frac{\Delta}{2}}(G)$ is a clique.
\end{cla}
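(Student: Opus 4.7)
The plan is to run the Bondy-Chv\'atal closure in at most two rounds, using the high-degree set $V_L$ furnished by Claims~\ref{clm1} and~\ref{clm5} as the core of an expanding clique. Let $V_L=V_{\ge\Delta-18}(G)$ when $\Delta\le100$ and $V_L=V_{\ge(1-13/81)\Delta}(G)$ when $\Delta\ge101$; in each case a routine arithmetic check using $\Delta\ge\tfrac{2n}{3}+12$ (which already forces $n\ge39$, since $\Delta\le n-1$) gives $|V_L|\ge\tfrac{n}{2}$, together with $L\ge\tfrac{\Delta}{2}$ and $L\ge\tfrac{n}{2}$. In particular $V_L\subseteq V_{\ge\Delta/2}(G)$.

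First I would show $V_L$ is a clique in $C(G)$. For $w_1,w_2\in V_L$ one has $d_G(w_1)+d_G(w_2)\ge 2L\ge n$: in the regime $\Delta\le100$ this reduces to $2\Delta-36\ge n$, which follows from the hypothesis and $n\ge36$, and in the regime $\Delta\ge101$ it reduces to $\tfrac{136\Delta}{81}\ge n$, which again follows from the hypothesis. Hence the first closure round adds every edge inside $V_L$. Next I would show every $u\in V_{\ge\Delta/2}(G)$ is adjacent in $C(G)$ to every $w\in V_L$. When $\Delta\le100$, the direct bound $d_G(u)+d_G(w)\ge\tfrac{\Delta}{2}+L=\tfrac{3\Delta}{2}-18\ge n$ does this in the first round. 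When $\Delta\ge101$, the direct sum $\tfrac{217\Delta}{162}$ can fall short of $n$, so I use a second round: from $d_G(u)\ge\tfrac{\Delta}{2}$ together with the hypothesis $\Delta-18\ge n-\tfrac{\Delta}{2}$, the vertex $u$ is already joined in the first round to every vertex of $V_{\ge\Delta-18}(G)$, a set of size at least $\tfrac{107}{162}\Delta$ by part~(a) of Claims~\ref{clm1} and~\ref{clm5}. Combined with $d_G(w)\ge L=\tfrac{68}{81}\Delta=\tfrac{136}{162}\Delta$, the intermediate degrees satisfy $d_1(u)+d_1(w)\ge(\tfrac{107}{162}\Delta-1)+\tfrac{136}{162}\Delta=\tfrac{3\Delta}{2}-1\ge n$, where the last inequality uses $\Delta\ge\tfrac{2n}{3}+12>\tfrac{2n}{3}+\tfrac{2}{3}$. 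So the second round adds $uw$.

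Consequently every $u\in V_{\ge\Delta/2}(G)$ is adjacent in $C(G)$ to all of $V_L\setminus\{u\}$; combined with $d_G(u)\ge L\ge\tfrac{n}{2}$ when $u\in V_L$ and with $|V_L|\ge\tfrac{n}{2}$ when $u\notin V_L$, this yields $d_{C(G)}(u)\ge\tfrac{n}{2}$ for every $u\in V_{\ge\Delta/2}(G)$. Hence for any pair $u,v\in V_{\ge\Delta/2}(G)$, $d_{C(G)}(u)+d_{C(G)}(v)\ge n$, so by definition of the closure $uv\in E(C(G))$. The hard step is the two-round closure argument for $\Delta\ge101$: one must simultaneously leverage the size bound $|V_{\ge\Delta-18}|\ge\tfrac{107}{162}\Delta$ from part~(a) of the claims \emph{and} the threshold value $L=\tfrac{68}{81}\Delta$ defining $V_L$, so that the two fractions $\tfrac{107}{162}$ and $\tfrac{136}{162}$ add to exactly $\tfrac{3}{2}$ and the intermediate degree sum clears $n$.
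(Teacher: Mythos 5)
Your proof is correct and follows essentially the same route as the paper: first join every vertex of degree at least $\frac{\Delta}{2}$ to $V_{\ge\Delta-18}(G)$ via degree sums $\ge\frac{3\Delta}{2}-18\ge n$, then (for $\Delta\ge101$) use the boosted closure degree $\ge\frac{107}{162}\Delta$ against $d(w)\ge\frac{68}{81}\Delta$ so the sum reaches $\frac{3}{2}\Delta>n$, and finally conclude from $d_{C(G)}(u)\ge\frac{n}{2}$ that $V_{\ge\frac{\Delta}{2}}(G)$ is a clique of $C(G)$. Your extra steps (the preliminary clique on $V_L$ and the explicit $-1$ bookkeeping) are harmless refinements of the same argument.
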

\begin{proof}
Let $u\in V(G)$ with $d(u)\ge  \frac{\Delta}{2}$. For any vertex $v\in V_{\ge \Delta-18}(G)$, we have $d(u)+d(v)\geq \frac{3\Delta}{2}-18\ge n$, thus $uv\in E(C(G))$. It follows that $d_{C(G)}(u)\geq|V_{\geq \Delta-18}(G)|$.
By Claims \ref{clm1} and \ref{clm5}, we have $d_{C(G)}(u)\geq\frac{107}{162}\Delta$.
If $\Delta\leq 100$, then $d_{C(G)}(u)\geq \frac{n}{2}$, thus $V_{\geq \frac{\Delta}{2}}(G)$ is a clique of $C(G)$.
Now we assume $\Delta\ge 101$.
For any vertex $w\in V_{\geq (1-\frac{13}{81})\Delta}(G)$, we have
 $$d_{C(G)}(u)+d_{C(G)}(w)\geq d_{C(G)}(u)+d(w)\geq \frac{107}{162}\Delta+(1-\frac{13}{81})\Delta=\frac{3}{2}\Delta>n,$$
which implies that $uw\in E(C(G))$ for any vertex $w\in V_{\geq (1-\frac{13}{81})\Delta}(G)$. Thus $d_{C(G)}(u)\geq|V_{\ge (1-\frac{13}{81})\Delta}(G)|$.
By Claims \ref{clm1}.\textbf{c} and \ref{clm5}.\textbf{c}, we have $d_{C(G)}(u)\geq \frac{n}{2}$.
Hence, $V_{\geq \frac{\Delta}{2}}(G)$ is a clique of $C(G)$.
\end{proof}

\begin{cla}\label{clm3}
If $\delta(G)< \frac{\Delta}{2}$, then $|N(X)|\ge 2|X|$ for any $X\subseteq V_{<\frac{\Delta}{2}}(G)$.
\end{cla}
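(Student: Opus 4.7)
The plan is to prove the claim by contradiction using a minimum counterexample argument based on Vizing's Adjacency Lemma (Lemma \ref{vizingaj}).

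Suppose, for contradiction, that some $X\subseteq V_{<\Delta/2}(G)$ satisfies $|N(X)|\leq 2|X|-1$; choose such an $X$ with $|X|$ minimum. I would first record the following VAL-based structural facts. Since $G$ is edge-$\Delta$-critical, if $d(x)=1$ with neighbor $y$ then VAL forces $d(y)\geq \Delta+1$, so $\delta(G)\geq 2$. Moreover, for any $x\in V_{<\Delta/2}(G)$ and any $y\in N(x)$, VAL applied to the edge $xy$ gives $d(y)\geq \Delta-d(x)+2>\Delta/2+2$, hence $N(X)\subseteq V_{\geq\Delta/2}(G)$. Now applying VAL to the edge $yx$ with $y$ chosen of minimum degree in $N(x)$, and combining with the lower bound $d_{\min}(x)\geq \Delta-d(x)+2$, one concludes $m(x):=|N(x)\cap V_\Delta|\geq d(x)-1$; in particular, every $x\in X$ has at most one non-max-degree neighbor.

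Next, by the minimality of $X$, for each $x\in X$ the inductive hypothesis applied to $X\setminus\{x\}$ yields $|N(X\setminus\{x\})|\geq 2|X|-2$, so the number of ``private'' neighbors $\nu(x):=|N(x)\setminus N(X\setminus\{x\})|$ satisfies $\nu(x)\leq 1$. I would then dispose of the small-degree case: if some $x\in X$ has $d(x)=2$, VAL forces both neighbors of $x$ to be max-degree, and a second application of VAL to each such neighbor $y$ (using $d(x)=2$) shows that $y$'s only non-max-degree neighbor is $x$ itself. Both neighbors are therefore private to $x$, giving $\nu(x)=2$, which contradicts $\nu(x)\leq 1$. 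Hence we may assume $d(x)\geq 3$ for every $x\in X$.

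For the remaining case $d(x)\geq 3$, I would refine the VAL bound $k_y\leq d(y)+d_{\min}(y)-\Delta-1$ for each $y\in N(X)$ (where $d_{\min}(y)=\min\{d(x'):x'\in N(y)\cap X\}$) and split $N(X)$ according to $N(X)=(V_\Delta\cap N(X))\sqcup\{y_x:x\in B\}$, where $B\subseteq X$ is the set of $x$'s that possess a non-max-degree neighbor $y_x$. For $x$ with $d(x)\leq 3$, the vertex $y_x$ (if it exists) has $d(y_x)\leq \Delta-1$ and VAL gives it at most $d(x)-2\leq 1$ neighbors in $X$, forcing $y_x$ to be private to $x$. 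For the shared max-degree vertices, $k_v\leq d_{\min}(v)-1$ yields tight control. Double-counting $e(X,N(X))=\sum_x d(x)=\sum_y k_y$ together with $\nu(x)\leq 1$ then pins down the structure: every $x\in X$ must have at least two max-degree neighbors that are shared with other vertices of $X$.

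The main obstacle is ruling out this last densely-shared configuration: the straightforward double-count gives only $|N(X)|\geq \frac{3}{2}|X|$, matching the hypothesis of Lemma \ref{lem14} but falling short of the claimed $2|X|$. To close the gap, one would exploit the rigidity of the shared structure — where each shared max-degree $v$ has exactly $k_v=d_{\min}(v)-1$ non-max-degree neighbors, all in $X$ — and either iterate VAL a second time to contradict $N(X)\subseteq V_{\geq\Delta/2}$, or more likely invoke Lemma \ref{gen vizingfan} applied to some $x\in X$ (with $q$ near $\Delta$) to exhibit a neighbor $z$ of $x$ whose second-neighborhood forces the existence of a max-degree vertex outside $N(X\setminus\{x\})$, yielding $\nu(x)\geq 2$ and the desired contradiction.
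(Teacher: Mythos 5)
Your proposal has two genuine problems. First, the structural fact you build everything on --- $|N(x)\cap V_\Delta|\ge d(x)-1$, i.e.\ each $x\in X$ has at most one non-maximum-degree neighbor --- does not follow from Vizing's Adjacency Lemma. Applying Lemma \ref{vizingaj} to the edge $yx$ with $y$ a minimum-degree neighbor of $x$ gives only $|N(x)\cap V_\Delta|\ge \Delta-d(y)+1$, and since $d(y)\ge\Delta-d(x)+2$ this inequality runs the wrong way for your purpose: the bound can be as weak as $1$, and nothing prevents $x$ from having many neighbors of degree, say, $\Delta-1$. So everything after your (correct) $d(x)=2$ case rests on an unproved, and in general false, premise. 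Second, and decisively, you concede that your double count only yields $|N(X)|\ge\tfrac{3}{2}|X|$, and the ideas you offer to close the gap are not carried out; in particular, Lemma \ref{gen vizingfan} only produces one neighbor $z$ of $x$ for which $\sigma_q(x,y)+\sigma_q(x,z)$ is large and says nothing about those second-neighborhood vertices lying outside $N(X\setminus\{x\})$, so it does not deliver $\nu(x)\ge 2$. Hence the stated bound $2|X|$ is not proved. (A correctly established $\tfrac{3}{2}$ bound would in fact suffice for the later application of Lemma \ref{lem14}, but it is not the claim you were asked to prove.)

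For comparison, the paper's proof uses neither VAL nor a minimal counterexample. It forms the bipartite graph $H$ between $X$ and $N(X)$, observes that every vertex counted by $\sigma(x,y)$ has degree at least $2\Delta-d(x)-d(y)+2>\tfrac{\Delta}{2}$ and hence lies outside $X$, so $d_H(y)\le d(y)-\sigma(x,y)$; then Woodall's Lemma \ref{p} splits $N(x)$ into at least $d(x)-p(x)-1$ neighbors with $d_H(y)\le p(x)+1$ and the remaining ones with $d_H(y)\le d(x)-p(x)-1$, and the fractional count
\[
\sum_{y\in N(x)}\frac{1}{d_H(y)}\ \ge\ \frac{d(x)-p(x)-1}{p(x)+1}+\frac{p(x)+1}{d(x)-p(x)-1}\ \ge\ 2
\]
(AM--GM), summed over $x\in X$, gives $|N(X)|\ge 2|X|$. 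It is Woodall's lemma, not VAL, that supplies the factor $2$; to salvage your route you would need an input of comparable strength at exactly the point where you currently wave toward ``iterating VAL'' or Lemma \ref{gen vizingfan}.
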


\begin{proof}
Let $X$ be a subset of $V_{<\frac{\Delta}{2}}(G)$. Since $G$ is edge-$\D$-critical, each edge of $G$ has degree sum greater than $\D+2$. Thus $X$ is an independent set of $G$, so $X\cap N(X)=\emptyset$. Let $H$ be the bipartite graph induced by the edges with one end-vertex in $X$ and the other in $N(X)$.
Recall that $p_{min}(x) :=  \min_{y\in N(x)}\sigma(x,y)-\Delta+d(x)-1$
and $p (x)  :=  \min \{p_{min}(x), \lfloor\frac{d(x)}{2}\rfloor-1\}$. For each $x\in X$, let $N_1(x)=\{y\in N(x):\sigma(x,y)\geq \Delta-p(x)-1\}$ and $N_2(x)=N(x)\setminus N_1(x)$.

For each $x\in X$ and $y\in N(x)$. By Lemma \ref{p}, $x$ has at least $d(x)-p(x)-1$ neighbors $y$ for which $\sigma(x,y)\geq \Delta-p(x)-1$.
Thus $|N_1(x)|\geq d(x)-p(x)-1$.
Since $2\Delta-d(x)-d(y)+2 >\frac{\Delta}{2}$, we have $\sigma(x,y)\le \sigma_{\frac{\Delta}{2}}(x,y)$.
Thus for each $y\in N_1(x)$ we have
$$d_H(y)\leq d(y)-\sigma(x,y)\leq d(y)-(\Delta-p(x)-1)\leq p(x)+1,$$
and for each $y\in N(x)$ we have
$$d_H(y)\leq d(y)-\sigma(x,y)\leq d(y)-(\Delta-d(x)+p(x)+1)\leq d(x)-p(x)-1.$$

For each edge $xy\in E(H)$ with $x\in X$ and $y\in N(X)$, we define $M(x,y)=\frac{1}{d_H(y)}$. Then we have
$$\sum_{xy\in E(H)}M(x,y)=\sum_{y\in N(X)}\sum_{x\in N(y)}\frac{1}{d_H(y)}=\sum_{y\in N(X)}1=|N(X)|.$$
On the other hand,
\begin{eqnarray*}\sum_{xy\in E(H)}M(x,y)&=&\sum_{x\in X}\sum_{y\in N(x)}\frac{1}{d_H(y)}=\sum_{x\in X}\left(\sum_{y\in N_1(x)}\frac{1}{d_H(y)}+\sum_{y\in N_2(x)}\frac{1}{d_H(y)}\right)\\
&\ge&\sum_{x\in X}\left(\frac{d(x)-p(x)-1}{p(x)+1}+\frac{p(x)+1}{d(x)-p(x)-1}\right)
\ge \sum_{x\in X}2=2|X|.\end{eqnarray*}
Therefore $|N(X)|\geq 2|X|$.
\end{proof}

\begin{cla}\label{clm4}
$\delta(G)\ge \frac{\Delta}{2}.$
\end{cla}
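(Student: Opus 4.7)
My plan is to suppose for contradiction that $\delta(G) < \D/2$ and then derive a contradiction by constructing a Hamiltonian cycle in the Bondy--Chv\'atal closure $C(G)$, which by Lemma \ref{lem13} together with the closure theorem of Bondy and Chv\'atal would imply that $G$ itself is Hamiltonian. Set $X = V_{<\D/2}(G)$ and $Y = V(G)\setminus X = V_{\ge \D/2}(G)$. Because every edge $uv$ of an edge-$\D$-critical graph satisfies $d(u)+d(v)\ge \D+2$, the set $X$ is independent and $N_G(X)\subseteq Y$. Two facts are already in hand: Claim \ref{clm3} gives $|N_G(S)|\ge 2|S|$ for every $S\subseteq X$, whence $|Y|\ge |N_G(X)|\ge 2|X|$, and Claim \ref{clm2} says that $Y$ is a clique in $C(G)$.

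The first step is to sharpen the conclusion of Lemma \ref{lem14}. I would form the bipartite graph obtained from $G[X,N_G(X)]$ by duplicating each $x\in X$ into two twin copies $x',x''$, both having neighborhood $N_G(x)$. The Hall condition for a matching in this duplicated graph that saturates the duplicated side reduces precisely to $|N_G(S)|\ge 2|S|$, so by Hall's theorem there exist two edge-disjoint matchings $M_1,M_2$ of $G[X,Y]$, each saturating $X$ and together using $2|X|$ distinct vertices of $Y$. Writing $a_x,b_x$ for the partners of $x$ in $M_1,M_2$ respectively, the set $M:=\{\{a_x,b_x\}:x\in X\}$ is a matching on $Y$ of size $|X|$.

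The second step is to assemble the Hamiltonian cycle. Since $Y$ is a clique in $C(G)$ and $|Y|\ge 2|X|$, the matching $M$ can be extended to a Hamiltonian cycle $\mathcal{C}_Y$ of $C(G)[Y]$ by arranging the $|X|$ matched pairs together with the $|Y|-2|X|$ unmatched vertices of $Y$ in any cyclic order that keeps each matched pair consecutive. For each $x\in X$ the edge $\{a_x,b_x\}$ lies on $\mathcal{C}_Y$; replacing it by the two-edge path $a_x\,x\,b_x$, whose edges $xa_x,xb_x$ lie in $E(G)\subseteq E(C(G))$, inserts $x$ into the cycle. After doing this for every $x\in X$ we obtain a Hamiltonian cycle of $C(G)$, and hence $G$ is Hamiltonian by Lemma \ref{lem13}, giving the desired contradiction.

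The hard part is the strengthening step. Lemma \ref{lem14} alone gives only $d_H(y)\le 2$, in which case the pair graph $M$ might contain cycles, and such cycles cannot all be embedded simultaneously into a Hamiltonian cycle of $K_{|Y|}$; one would then have to break each cycle by extracting an $X$-vertex and reinserting it into the final cycle, together with careful bookkeeping. The doubled-Hall argument above sidesteps this issue entirely by fully exploiting that Claim \ref{clm3} gives the strictly stronger bound $|N(S)|\ge 2|S|$, rather than only the $|N(S)|\ge \tfrac{3}{2}|S|$ condition for which Lemma \ref{lem14} is calibrated.
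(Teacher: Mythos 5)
Your proof is correct, and it takes a genuinely different route from the paper at the key structural step. The paper, after invoking Claims \ref{clm2} and \ref{clm3} exactly as you do, applies Lemma \ref{lem14} to the bipartite graph between $V_{<\Delta/2}(G)$ and its neighborhood, obtaining a subgraph $H$ with $d_H(x)=2$ for $x\in V_{<\Delta/2}(G)$ and $d_H(y)\le 2$ for $y\in V_{\ge\Delta/2}(G)$; it then reads $H$ as a family of vertex-disjoint paths with endpoints in $V_{\ge\Delta/2}(G)$ and inserts these whole paths into the clique that Claim \ref{clm2} provides in $C(G)$, finishing with Lemma \ref{lem13} just as you do. You instead bypass Lemma \ref{lem14} entirely: duplicating each low-degree vertex and applying Hall's theorem, for which the condition is exactly the $|N(S)|\ge 2|S|$ bound of Claim \ref{clm3}, you get for each $x$ two private clique vertices $a_x,b_x$ with all $2|X|$ of them distinct, i.e.\ the stronger conclusion $d_H(y)\le 1$. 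This makes the insertion step trivial (one clique edge per low-degree vertex) and, as you note, cleanly avoids the issue that a subgraph with $d_H(y)\le 2$ could a priori contain cycle components alternating between the two sides, a point the paper's phrase ``that is, there exist some vertex-disjoint paths'' passes over without comment. The trade-off is that your argument leans on the full factor-$2$ expansion from Claim \ref{clm3} and is self-contained and elementary, whereas the paper's argument is shorter on the page because it delegates the structural work to the cited Lemma \ref{lem14}, which is calibrated for the weaker $\tfrac{3}{2}|S|$ hypothesis. Both arguments then conclude identically: the closure $C(G)$ is Hamiltonian, hence so is $G$ by Lemma \ref{lem13}, contradicting the standing assumption. (Minor housekeeping in your write-up: the cyclic arrangement of the pairs and leftover clique vertices needs $|V_{\ge\Delta/2}(G)|\ge 3$, which is guaranteed here since Claims \ref{clm1} and \ref{clm5} give $|V_{\ge\Delta-18}(G)|\ge\tfrac{107}{162}\Delta$ with $\Delta\ge 20$.)
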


\begin{proof}
Suppose, on the contrary, $\delta(G)<\frac{\Delta}{2}$.
By Claim \ref{clm3}, we have $|N(X)|\geq 2|X|$ for any $X\subseteq V_{<\frac{\Delta}{2}}(G)$. Thus by Lemma \ref{lem14},
$G$ has a subgraph $H$ covering $V_{<\frac{\Delta}{2}}(G)$ such that for every $x\in V_{<\frac{\Delta}{2}}(G)$, $d_H(x)=2$ and for every $y\in V_{\ge\frac{\Delta}{2}}(G)$, $d_H(y)\le 2$.
That is,
there exist some vertex-disjoint paths $P_1,..., P_s$ covering $V_{<\frac{\Delta}{2}}(G)$ such that the end-vertices of $P_i$ belong to $V_{\geq\frac{\Delta}{2}}(G)$ for all $1\le i\le s$. Therefore, we can insert each vertex of $V_{<\frac{\Delta}{2}}$ into
the subgraph induced by $V_{\geq \frac{\Delta}{2}}(G)$.
By Claim \ref{clm2}, $V_{\geq \frac{\Delta}{2}}(G)$ is a clique of $C(G)$. Thus $C(G)$ is Hamiltonian. So by Lemma \ref{lem13}, $G$ is Hamiltonian, a contradiction.
\end{proof}

Now we complete the proof of our main theorem. By Claims \ref{clm2} and \ref{clm4}, $V(G)$ is a clique of $C(G)$.
So $C(G)$ is Hamiltonian, and by Lemma \ref{lem13}, $G$ is Hamiltonian, giving a contradiction.

%%%%%%%%

%%%%%%%%%%%%%%%%%%%%%%%%%%%%%%%%%%%%%%%%%%%%%%%%%%%%%%%

\section{ Proofs of Lemmas \ref{gen vizingfan} and \ref{gen broom} }

\subsection{Proof of Lemma \ref{gen vizingfan}}

\noindent \textbf{Lemma \ref{gen vizingfan}.}
Let $xy$ be an edge in an edge-$\Delta$-critical graph $G$ and $q$  a positive number. If $d(x)<\frac{\Delta}{2}$ and $q\leq \Delta-10$, then
there exists a vertex $z\in N(x)\setminus\{y\}$ such that $\sigma_q(x,y)+\sigma_q(x,z)>2\Delta-d(x)-\frac{6d(x)-8}{\Delta-q}-\frac{8(d(x)-2)}{(\Delta-q)^2}.$
\begin{proof}

Let graph $G$, edge $xy\in E(G)$ and $q$ be defined in Lemma~\ref{gen vizingfan}.  A neighbor $z\in N(x)\setminus \{y\}$ is called {\it feasible} if there exits a coloring $\phiv\in \mathcal{C}^{\D}(G-xy)$ such that $\phiv(xz) \in \phibar(y)$, and such a coloring $\phiv$ is called a {\it $z$-feasible coloring}. Denote by
 $\mathcal{C}_z$ the set of all $z$-feasible colorings.
 For each  $\phiv\in \mathcal{C}_z$, let
\begin{eqnarray*}
Z(\phiv)  & = & \{v\in N(z)\setminus\{x\} \,:\,  \varphi(vz)\in \bar{\varphi}(x)\cup \bar{\varphi}(y)\}, \\
C_z(\phiv) & = & \{\varphi(vz) \,:\, v\in Z(\phiv) \mbox{ and } d(v) < q\}, \\
Y(\phiv) & =& \{v\in N(y)\setminus\{x\}\,:\,  \varphi(vy)\in \bar{\varphi}(x)\cup \bar{\varphi}(z)\}, \mbox{ and } \\
C_y(\phiv)  & = & \{\varphi(vy) \,:\,  v\in Y(\phiv) \mbox{ and } d(v) < q\}.
\end{eqnarray*}
Note that $Z(\phiv)$ and $Y(\phiv)$ are vertex sets while
$C_z(\phiv)$ and $C_y(\phiv)$ are color sets.
For each color $k\in \phiv(z)$, let $z_k$ be the unique vertex in $N(z)$ such that $\phiv(zz_k) = k$. Similarly, we define $y_k$ for each $k\in \phiv(y)$. Let
$T_0(\phiv) = \{k \in \phiv(x)\cap\phiv(y)\cap \phiv(z) \,:\,
d(y_k) < q \mbox{ and } d(z_k) < q \}.$

Since $G$ is edge-$\D$-critical, $\{x, y, z\}$ is elementary with respect to $\phiv$.  So $\phibar(x)$, $\phibar(y)$, $\phibar(z)$ and $\phiv(x)\cap \phiv(y)\cap \phiv(z)$ are mutually exclusive, and
$$
\phibar(x)\cup \phibar(y) \cup \phibar(z)\cup (\phiv(x)\cap\phiv(y)\cap \phiv(z)) = \{1, 2, \dots, \D\}.
$$
Recall that  $\sigma_q(x,y)$ and $\sigma_q(x,z)$ are number of vertices with degree $\ge q$ in $N(y)\setminus\{x\}$ and $N(z)\setminus \{x\}$, respectively. So, the following inequalities hold.
\begin{eqnarray*}
&&\sigma_q(x,y) + \sigma_q(x,z) \\
& \ge & |Y(\phiv)|-|C_y(\phiv)| + |Z(\phiv)| -|C_z(\phiv)| + |\phiv(x)\cap \phiv(y)\cap\phiv(z)| -|T_0(\phiv)|\\
& = & |\phibar(x)\cup\phibar(z)| + |\phibar(x)\cup\phibar(y)|-1 +
|\phiv(x)\cap\phiv(y)\cap \phiv(z)|\\
& & - |C_y(\phiv)| -|C_z(\phiv)| -|T_0(\phiv)|\\
& = & \D + |\phibar(x)| - |C_y(\phiv)| -|C_z(\phiv)| -|T_0(\phiv)|-1\\
& = &
2\D -d(x) -|C_y(\phiv)| -|C_z(\phiv)| -|T_0(\phiv)|
\end{eqnarray*}
So, Lemma~\ref{gen vizingfan} follows the two statements below.
\begin{itemize}
\item [{\bf I.}] For any $\phiv\in \mathcal{C}_z$, $|C_z(\phiv)|< \frac{d(x)-2}{\Delta-q}$ and $|C_y(\phiv)| < \frac{d(x)-2}{\Delta-q}$;
\item [{\bf II.}] there exists a $\phiv\in \mathcal{C}_z$ such that
$|T_0(\phiv)| \le \frac{4d(x)-4}{\Delta-q}+\frac{8(d(x)-2)}{(\Delta-q)^2}$.
\end{itemize}

For every  $\phiv\in \mathcal{C}_z$, let $\phiv^d\in \mathcal{C}^{\D}(G-xz)$ obtained from $\phiv$ by assigning $\phiv^d(xy) = \phiv(xz)$ and keeping all colors on other edges unchange.  Clearly, $\phiv^d$ is a $y$-feasible coloring and $Z(\phiv^d)=Z(\phiv)$, $Y(\phiv^d) = Y(\phiv)$, $C_z(\phiv^d) =C_z(\phiv)$ and $C_y(\phiv^d) = C_y(\phiv)$.  We call $\phiv^d$ the {\it dual coloring} of $\phiv$. Considering dual colorings, we see that some properties for vertex $z$ is also held for vertex $y$.

Let $z\in N(x)\setminus\{y\}$ be a feasible vertex and $\phiv\in \mathcal{C}_z$. By the definition of $Z(\phiv)$,  $G[\{x, y, z\}\cup Z(\phiv)]$ contains a simple broom. Since $d(x)<\frac{\Delta}{2}$ and $\frac{\Delta}{2}\ge 1$, we have $|\bar{\varphi}(x)\cup \bar{\varphi}(y)|\geq 4$.
Thus by Lemma \ref{broom} the set $\{x, y, z\}\cup Z(\phiv)$ is elementary with respect to $\phiv$. Counting the number of missing colors of vertices in the set $\{x,y,z\}\cup Z(\phiv)$, we obtain $(\Delta-q)|C_z(\phiv)|+|\bar{\varphi}(x)|+|\bar{\varphi}(y)|+|\bar{\varphi}(z)|<\sum_{v\in \{x,y,z\}\cup Z(\phiv)}|\phibar(v)|\leq \Delta$, which implies that $|C_z(\phiv)|<\frac{d(x)-2}{\Delta-q}$.
By considering its dual $\phiv^d$, we have $|C_y(\phiv)| =|C_y(\phiv^d)|<\frac{d(x)-2}{\Delta-q}$. Hence, {\bf I} holds.

The proof of {\bf II} is much more complicated and will be placed in a separated section.
A  coloring $\phiv\in \mathcal{C}_z$ is called {\it optimal} if over all feasible colorings the followings hold:

1. $|C_z(\phiv)|+|C_y(\phiv)|$ is maximum;

2. subject to 1, $|C_z(\phiv)\cap C_y(\phiv)|$ is minimum.

\subsubsection{Proof of {\bf II}.}

Suppose, on the contrary, $|T_0(\phiv)| > \frac{4d(x)-4}{\Delta-q}+\frac{8(d(x)-2)}{(\Delta-q)^2}$ for every $\phiv\in \mathcal{C}_z$.
Let $T_0(\phiv)=\{k_1,\cdots,k_{|T_0(\phiv)|}\}$
and $V_{T_0(\phiv)}=\{z_{k_i}\in N(z)\,:\,k_i\in T_0(\phiv)\}\cup\{y_{k_i}\in N(y)\,:\,k_i\in T_0(\phiv)\}$.
Let $\phiv$ be an optimal feasible coloring and assume, without loss of generality, $\phiv(xz) =1$.
Let $Z=Z(\phiv)$, $Y=Y(\phiv)$, $C_z = C_z(\phiv)$,  $C_y = C_y(\phiv)$ and $T_0 = T_0(\phiv)$ if the coloring $\phiv$ is clearly referred. Let $R=C_z\cup C_y$.

{\flushleft \bf Claim A.} For each $i\in \phibar(x)\setminus R$ and
$k\in T_0$,  we have $P_x(i,k,\varphi )$ contains both $y$ and $z$.

\begin{proof} We first show that $z\in V(P_x(i,k,\varphi ))$. Otherwise, $P_z(i, k, \phiv)$ is disjoint with $P_x(i, k, \phiv)$. Let $\varphi^\prime=\varphi / P_z(i,k,\varphi )$.  Since $i, k\ne 1$, $\phiv^\prime$ is also feasible. Since colors in $R$ are unchanged and $d(z_k)< q$, we have $C_z(\phiv^\prime) = C_z\cup \{i\}$ and $C_y(\phiv^\prime) \supseteq C_y$, giving a contradiction to
the maximality of $|C_y| + |C_z|$.  By considering the dual $\phiv^d$, we can verify that $y\in V(P_x(i,k,\varphi ))$.
\end{proof}

{\flushleft \bf Claim B.} If there exist three vertices $u_1,u_2,u_3\in V_{T_0}$ and two colors $\alpha\in \varphi(x)\cup R,\beta\in \bar{\varphi} (x)\setminus R$ such that $\alpha\in \bar{\varphi}(u_1)\cap \bar{\varphi}(u_2)\cap \bar{\varphi}(u_3)$ and $\beta\in \varphi(u_1)\cap \varphi(u_2)\cap \varphi(u_3)$, then there exists a vertex $u\in \{u_1,u_2,u_3\}$ and an optimal feasible coloring $\varphi^\prime=\phiv/P_u(\alpha,\beta,\phiv)$ such that $\beta\in \bar{\varphi^\prime}(u)$.

\begin{proof}
We first consider the case of $\alpha\in \phiv(x)\setminus R$.
Since $\alpha\in \bar{\varphi}(u_1)\cap \bar{\varphi}(u_2)\cap \bar{\varphi}(u_3)$, we may assume $P_x(\alpha,\beta,\phiv)$ is disjoint with $P_{u_1}(\alpha,\beta,\phiv)$.
Let $\varphi_1=\varphi/ P_{u_1}(\alpha,\beta,\varphi)$. We claim that $\phiv_1$ is still feasible. It is easy to see that $\phiv_1$ is feasible when $\alpha\ne 1$.
Now we suppose $\alpha=1$. In this case, $P_x(\alpha,\beta,\varphi)=P_y(\alpha,\beta,\varphi)$.
Then $z\in P_x(\alpha,\beta,\varphi)$ as $\phiv(xz)=1$. Thus
$\phiv_1$ is feasible.
Since $\alpha, \beta\notin R$,
we have $C_y(\phiv_1) = C_y$ and $C_z(\phiv_1) = C_z$. So, $\phiv_1$ is also optimal and $\beta\in \bar{\varphi_1 }(u_1)$.

We now suppose $\alpha\in R$. By the definition of $R$, there exists a vertex $v\in \{x,y,z\}$ such that $\alpha\in \phibar(v)$. Clearly, $P_v(\alpha,\beta,\phiv)=P_x(\alpha,\beta,\phiv)$ when $v=x$.
We claim that $P_v(\alpha,\beta,\phiv)=P_x(\alpha,\beta,\phiv)$ when $v\in \{y,z\}$.
Otherwise, if $v=y$ then $\phiv/P_x(\alpha,\beta, \phiv)$ would lead a $\D$-coloring of $G$ and if $v=z$ then $\phiv^d/P_x(\alpha,\beta, \phiv)$ would lead a $\D$-coloring of $G$, giving a contradiction in either case.
Thus $V(P_x(\alpha,\beta,\phiv))\cap \{u_1,u_2,u_3\}=\emptyset$.
Let $w$ be a vertex of $\{y,z\}$ such that $\alpha\in C_w$. We may assume $P_{u_1}(\alpha,\beta,\phiv)$ is disjoint with $P_w(\alpha,\beta,\phiv)$.
If $\alpha\notin C_z\cap C_y$, then $\varphi_1=\varphi/ P_{u_1}(\alpha,\beta,\varphi)$ is the required coloring such that $\beta\in \bar{\varphi_1 }(u_1)$.
Now we suppose that $\alpha\in C_z\cap C_y$. Thus $\alpha\in \phibar(x)$.
We claim that $P_{z}(\alpha,\beta,\varphi)=P_{y}(\alpha,\beta,\varphi)$. For otherwise, let $\varphi_1=\varphi/ P_{z}(\alpha,\beta,\varphi)$, we have $\phiv_1(x)=\phiv(x)$, $C_z(\phiv_1)=(C_z\cup \{\beta\})\setminus \{\alpha\}$ and $C_y(\phiv_1)=C_y$, which implies that
$|C_z(\phiv_1)|+|C_y(\phiv_1)|=|C_z|+|C_y|$ and $|C_z(\phiv_1)\cap C_y(\phiv_1)|=|C_z\cap C_y|-1$, which leads a contradiction to the minimality of $|C_z\cap C_y|$.
Thus
$\varphi_1=\varphi/ P_{u_1}(\alpha,\beta,\varphi)$ is feasible, $C_y(\phiv_1) = C_y$ and $C_z(\phiv_1) = C_z$.
So $\varphi_1$ is also optimal and $\beta\in \bar{\varphi_1 }(u_1)$.
\end{proof}

Recall that we have assumed $|T_0| \ge \frac{4d(x)-4}{\Delta-q}+\frac{8(d(x)-2)}{(\Delta-q)^2}$. Let $T=\{k_1, \cdots, k_{|T|}\}$ be a subset of $T_0$ with $|T|=\lfloor\frac{4d(x)-4}{\Delta-q}+\frac{8(d(x)-2)}{(\Delta-q)^2}\rfloor+1$.
Let
 \begin{eqnarray*}
V_T & = &\{z_{k_1},z_{k_2},\cdots, z_{k_{|T|}}\}\cup\{y_{k_1},y_{k_2},\cdots, y_{k_{|T|}}\},\\
W(\phiv) & = & \{u\in V_T\,:\, \phibar(u) \subseteq\phiv(x)\cup  R \},\\
M(\phiv) & = & V_T-W(\phiv)=\{u\in V_T \,:\, \phibar(u)\cap(\phibar(x)\setminus R)\neq \emptyset\},\\
E_T & = & \{zz_{k_1}, zz_{k_2},\cdots, zz_{k_{|T|}}, yy_{k_1},
yy_{k_2},\cdots, yy_{k_{|T|}}\},\\
E_W(\phiv) & = & \{e\in E_T \,:\,  \mbox{ $e$ is incident to a vertex in $W(\phiv)$}\},\mbox{ and } \\
E_M(\phiv) & = & E_T-E_W(\phiv)=\{e\in E_T\,:\, \mbox{ $e$  is incident to a vertex
in $M(\phiv)$} \}.
\end{eqnarray*}
Let $W=W(\phiv),M=M(\phiv),E_W=E_W(\phiv)$ and $E_M=E_M(\phiv)$ if the coloring $\phiv$ is clearly referred.

We assume that $|E_M|$ is maximum over all optimal feasible colorings $\phiv$ and all subsets of $T_0$ with order $|T|$. For each $v\in M$, pick a color $\alpha_v\in \phibar(v)\cap (\phibar(x)\setminus R)$. Let $C_M=\{\alpha_v \,:\, v\in M\}$. Clearly, $|C_M| \le |M|$.  Note that $\{z_{k_1},z_{k_2},\cdots, z_{k_{|T|}}\}\cap\{y_{k_1},y_{k_2},\cdots, y_{k_{|T|}}\}$ may be not empty, we have $\frac{|E_W|}{2}\leq |W|\leq |E_W|$ and $\frac{|E_M|}{2}\leq |M|\leq |E_M|$.

Since
$|T|=\lfloor\frac{4d(x)-4}{\Delta-q}+\frac{8(d(x)-2)}{(\Delta-q)^2}\rfloor+1$,
by \textbf{I}, we have
\begin{equation}\label{ineq1}
(\D-q)\left\lceil\frac{|T|}{2}\right\rceil\geq 2d(x)-2+\frac{4(d(x)-2)}{\Delta-q}>2|\phiv(x)\cup R|.
\end{equation}
Since $d(x)<\frac{\Delta}{2}$ and $q\leq \Delta-10$, by \textbf{I}, we have  $|\bar{\varphi }(x)\setminus R|>\Delta-d(x)+1-2(\frac{d(x)-2}{\Delta-q})>\frac{2\Delta+7}{5}$.
and $|T|+\frac{2|\varphi(x)\cup R|}{\Delta-q-1}<\frac{\Delta-2}{5}+\frac{\Delta-4}{25}+\frac{6\Delta-14}{45}+1=\frac{84\Delta+29}{225}$.
Since $\frac{2\Delta+7}{5}>\frac{84\Delta+29}{225}$, the following inequality holds.
\begin{equation}\label{ineq2}
|\bar{\varphi }(x)\setminus R|>|T|+\frac{2|\varphi(x)\cup R|}{\Delta-q-1}
\end{equation}

{\flushleft \bf Claim C.} Suppose there is no color $k\in T$ such that there are three distinct colors $i,j,l\in \phibar(x)\setminus R$ with $i\in \phibar(z_k)$ and $j,l\in \phibar(y_k)$. Then there exists an optimal feasible coloring such that $|E_M|\geq |T|+\frac{2|\varphi(x)\cup R|}{\Delta-q-1}.$

\begin{proof}
Suppose, on the contrary, $|E_M|< |T|+\frac{2|\varphi(x)\cup R|}{\Delta-q-1}.$
Since $|C_M|\le |E_M|$, by (\ref{ineq2}), there exists a color $\beta\in  \bar{\varphi }(x)\setminus (R\cup C_M)$. Clearly, $\beta\in \phiv(u)$ for each $u\in W$.

First we claim that $|E_M|\geq |T|+1$. If not, then $|E_M|\leq |T|$ and $|E_W|\geq |T|$ since $|E_M|+|E_W|=2|T|$. Thus by (\ref{ineq1}) we have $\sum_{v\in W}|\bar{\varphi}(v)|> (\Delta-q)\left\lceil\frac{|T|}{2}\right\rceil>
2|\varphi(x)\cup R|$. By the definition of $W$, $\bar{\varphi}(v)\subseteq \varphi(x)\cup R$ for every $v\in W$, by the Pigeonhole Principle,  there exists three vertices $u_1,u_2,u_3\in W$ and a color $\alpha\in \phiv(x)\cup R$ such that $\alpha\in \bar{\varphi}(u_1)\cap \bar{\varphi}(u_2)\cap \bar{\varphi}(u_3)$.
We note that $\beta\in \varphi(u_1)\cap \varphi(u_2)\cap \varphi(u_3)$ since $u_1,u_2,u_3\in W$. Applying Claim B with $\alpha$ and $\beta$, there exists a vertex $u\in \{u_1,u_2,u_3\}$ and an optimal feasible coloring $\varphi^\prime=\varphi/ P_{u}(\alpha,\beta,\varphi)$ such that $\beta\in \bar{\varphi^\prime}(u)$. Since $u\in W$ and $\beta\notin C_M$, we have $|E_M(\varphi^\prime)|>|E_M|$, a contradiction to the maximality of $|E_M|$.

We may assume that $|E_M|=|T|+p$, thus $|E_W|=|T|-p$. Since $|E_M|=|T|+p$, there exists a $p$-element set $Y_1=\{y_{k}\, :\,k\in T \mbox{\ and } z_{k},y_{k}\in M \}$.  For each $y_{k}\in Y_1$, by Claim A, we have $(\phibar(x)\setminus R)\cap \bar{\varphi }(y_{k})\cap \bar{\varphi }(z_{k})=\emptyset$. Since there do not exist color $k\in T$ such that there exist $i,j,l\in \phibar(x)\setminus R$ with $i\in  \bar{\varphi }(z_k)$, $j,l\in  \bar{\varphi }(y_k)$, we have  $|(\phibar(x)\setminus R)\cap \bar{\varphi }(y_{k})|=1$ for each $y_{k}\in Y_1$. Thus by (\ref{ineq1}) we have
%\begin{eqnarray*}
%\sum_{v\in W\cup Y_1}|\bar{\varphi}(v)\cap (\phiv(x)\cup R)|&>& (\Delta-q)\left\lceil\frac{|T|-p}{2}\right\rceil+(\Delta-q-1)p\\
%&\geq& (\Delta-q)\left\lceil\frac{|T|}{2}\right\rceil+(\frac{\Delta-q}{2}-1)p>
%2|\varphi(x)\cup R|.
%\end{eqnarray*}
$\sum_{v\in W\cup Y_1}|\bar{\varphi}(v)\cap (\phiv(x)\cup R)|> (\Delta-q)\left\lceil\frac{|T|-p}{2}\right\rceil+(\Delta-q-1)p\geq (\Delta-q)\left\lceil\frac{|T|}{2}\right\rceil+(\frac{\Delta-q}{2}-1)p>
2|\varphi(x)\cup R|$.
Hence there exist three vertices $u_1,u_2,u_3\in W\cup Y_1$ and a color $\alpha\in \varphi(x)\cup R$ such that $\alpha\in \bar{\varphi}(u_1)\cap \bar{\varphi}(u_2)\cap \bar{\varphi}(u_3)$. Since $\beta\notin  C_M$ and $|(\phibar(x)\setminus R)\cap \bar{\varphi }(y_{k})|=1$ for each $y_{k}\in Y_1$, we have $\beta\in \varphi(u_1)\cap \varphi(u_2)\cap \varphi(u_3)$. Applying Claim B, there exists a vertex $u\in \{u_1,u_2,u_3\}$ and an optimal feasible coloring $\varphi^\prime=\varphi/ P_{u}(\alpha,\beta,\varphi)$ such that $\beta\in \bar{\varphi^\prime}(u)$. Thus $u\in M(\varphi^\prime)$. If $u\in W$, then $|E_M(\varphi^\prime)|>|E_M|$, a contradiction. Thus $u\in Y_1$. We may assume that $u=y_{k_a}$. So there exist three colors $i,j,\beta\in \bar{\varphi^\prime }(x)\setminus R$ such that $i\in  \bar{\varphi^\prime}(z_{k_a})$, $j,\beta\in  \bar{\varphi^\prime}(y_{k_a})$, giving a contradiction.
Thus Claim C holds.
\end{proof}

{\flushleft \bf Claim D.} There exists a color $k\in T$ and three distinct colors $i,j,l\in \phibar(x)\setminus R$ such that $i\in  \bar{\varphi }(z_k)$, $j,l\in  \bar{\varphi }(y_k)$.

\begin{proof}
We first note that if there exist $i,j\in \phibar(x)\setminus R$ such that $i\in  \bar{\varphi }(z_k)$ and $j\in  \bar{\varphi }(y_k)$, then $i\neq j$; for otherwise, by Claim A, the path $P_{x}(i,k,\varphi )$ contains three endvertices $x,z_k$ and $y_k$, a contradiction.

Suppose that Claim D does not hold.
By Claim C, we have $|E_M|\geq |T|+\frac{2|\varphi(x)\cup R|}{\Delta-q-1}$.
So there exists a $\lceil\frac{2|\varphi(x)\cup R|}{\Delta-q-1}\rceil$-element set  $Y_1=\{y_{k}\,:\,k\in T \mbox{\ and } z_{k},y_{k}\in M \}$ such that $|\bar{\varphi }(y_{k})\cap(\phibar(x)\setminus R)|=1$ for each $y_{k}\in Y$.
Then we have $\sum_{v\in Y_1}|\bar{\varphi}(v)\cap (\phiv(x)\cup R)|> (\Delta-q-1)|Y_1|= 2|\varphi(x)\cup R|$.
Thus there exist three vertices $u_1,u_2,u_3\in Y_1$ and a color $\alpha\in \varphi (x)\cup R$ such that $\alpha\in \bar{\varphi}(u_1)\cap \bar{\varphi}(u_2)\cap \bar{\varphi}(u_3)$.
For each $y_{k}\in Y_1$, pick a color $\alpha_{k1}\in \phibar(y_{k})\cap (\phibar(x)\setminus R)$ and $\alpha_{k2}\in \phibar(z_{k})\cap (\phibar(x)\setminus R)$. Let $C_M^\prime=\{\alpha_{kj} \,:\, y_{k}\in Y_1 \mbox{\ and } 1\le j\le 2\}$. Clearly, $|C_M^\prime|=2|Y_1|$.
 By (\ref{ineq2}), we have $|\bar{\varphi}(x)\setminus R|>|C_M^\prime|$, thus there exists a color
$\beta\in \phibar(x)\setminus (R\cup C_M^\prime)$ such that $\beta\in \varphi(y_{k})$ for all $y_{k}\in Y_1$.
So $\beta\in \varphi(u_1)\cap \varphi(u_2)\cap \varphi(u_3)$.

Applying Claim B with colors $\alpha$ and $\beta$, there exists a vertex $u\in \{u_1,u_2,u_3\}$ and an optimal feasible coloring $\varphi^\prime=\varphi/ P_{u}(\alpha,\beta,\varphi)$ such that $\beta\in \bar{\varphi^\prime}(u)$. Then $u\in M(\phiv^\prime)$.
Since $\beta\notin C_M^\prime$, there exists a color $k_a\in T$ with $u=y_{k_a}$ and colors $i,j,\beta\in \bar{\varphi^\prime }(x)\setminus R$ such that $i\in  \bar{\varphi^\prime}(z_{k_a})$, $j,\beta\in  \bar{\varphi^\prime}(y_{k_a})$, giving a contradiction.
\end{proof}

Let $k,i,j,l$ be as stated in Claim D. Note that $P_x(l,1,\varphi)$ and $P_y(l,1,\varphi)$ are same and are disjoint from $P_{y_k}(l,1,\varphi)$.
 If $l\ne 1$, we consider coloring  $\phiv/P_{y_k}(l,1,\varphi)$, and rename it as $\phiv$. So we may assume $1\in \bar{\varphi }(y_k)$.

By Claim A, the paths $P_x(i,k,\varphi)$ and $P_x(j,k,\varphi)$ both contain $y,z$.  Since
$\phiv(yy_k)=\phiv(zz_k)=k$,  these two paths   also contain $y_k,z_k$. Since $i\in \bar{\varphi }(z_k)$, we have $x$ and $z_k$ are the two endvertices of $P_x(i, k, \phiv)$. So, $i\in \phiv(y)\cap\phiv(z)\cap\phiv(y_k)$.  Similarly, we have $j\in \phiv(y)\cap \phiv(z)\cap \phiv(z_k)$.
We now consider the following sequence of colorings of $G-xy$.

 Let $\phiv_1$ be obtained from $\phiv$ by assigning $\phiv_1(yy_k) = 1$.  Since $1$ is missing at both $y$ and $y_k$,  $\phiv_1$ is an edge-$\D$-coloring of $G-xy$. Now $k$ is missing at $y$ and $y_k$, $i$ is still missing at $z_k$.  Since $G$ is not $\D$-colorable, $P_x(i,k,  \phiv_1) = P_y( i, k, \phiv_1)$; otherwise $\phiv_1/P_y( i, k, \phiv_1)$ can be extended to an edge-$\D$-coloring of $G$ giving a contradiction.  Furthermore, $z_k, y_k\notin V(P_x(i, k, \phiv_1))$ since either $i$ or $k$ is missing at these two vertices, which in turn shows that $z\notin V(P_x(i, k, \phiv_1))$ since $\phiv_1(zz_k) =k$.

Let $\varphi _2=\varphi _1/P_x(i,k,\varphi_1 )$. We have $k\in \bar{\varphi _2}(x), i \in \bar{\varphi _2}(y)\cap\bar{\varphi _2}(z_k)$ and $ j\in \bar{\varphi _2}(x)\cap\bar{\varphi _2}(y_k)$. Since $G$ is not edge-$\D$-colorable, $P_x(i, j, \phiv_2) = P_y(i,j, \phiv_2)$ which  contains neither  $y_k$ nor $z_k$.

Let $\phiv_3 = \phiv_2/P_x(i, j, \phiv_2)$.  Then $k\in \bar{\varphi _3}(x)$ and $j\in \bar{\varphi _3}(y)\cap \bar{\varphi _3}(y_k)$.

Let $\varphi _4$ be obtained from $\varphi _3$ by recoloring $yy_k$ by $j$.  Then $1\in \bar{\varphi _4}(y)$, $\varphi _4(xz)=1$, $k\in \bar{\varphi _4}(x)$, $\varphi _4(zz_k)=k$. Since $\phiv_4(xz) = 1\in \phibar_4(y)$,  $\phiv_4$ is $z$-feasible.
Since $i,j,k\notin R=C_y\cup C_z$, the colors in $R$ are unchanged during this sequence of re-colorings, so $C_y(\phiv_4) \supseteq C_y$ and $C_z(\phiv_4) \supseteq C_z$.  Since $\phiv_4(zz_k)=k\in \phibar_4(x)$ and $d(z_k) < q$, we have $k \in C_z(\phiv_4)$. So, $C_z(\phiv_4) \supseteq C_z\cup \{k\}$. Therefore, $|C_y(\phiv_4)| + |C_z(\phiv_4)| \ge |C_y| + |C_z| +1$, giving a contradiction.
So \textbf{II} holds.
\end{proof}

%%%%%%%%%%%%%%%%%%%%%%%%%%%%%%%%%%%%%%%%%%%%%%%%%%%%%%%
\subsection{ Proof of Lemma \ref{gen broom} }
%Lemma gen broom
The proof of Lemma \ref{gen broom} follows similar structure of the proof of Lemma \ref{gen vizingfan}. Nevertheless the difference is big enough. For this reason, we give it.

\noindent \textbf{Lemma \ref{gen broom}.}
Let $x_1x_2$ be an edge in an  edge-$\Delta$-critical graph $G$ and $q$  a positive number. If $d(x_1)+d(x_2)<\frac{3}{2}\D$, $q\leq \Delta-10$ and $\delta(G)\geq \frac{\Delta}{2}$, then
there exists a pair of vertices $\{z,y\}$ with $z\in N(x_1)\setminus \{x_2\}$ and $y\in N(x_2)\setminus \{x_1,z\}$ such that
$\sigma_q(x_1,z)+\sigma_q(x_2,y)>
3\Delta -d(x_1)-d(x_2)-\frac{6(d(x_1)+d(x_2)-\Delta)-4}{\Delta-q}-\frac{8(d(x_1)+d(x_2)-\Delta-2)}{(\Delta-q)^2}
$.

\begin{proof}
Let graph $G$, edge $x_1x_2\in E(G)$, $q$ and $\delta(G)$ be defined in Lemma~\ref{gen broom}.  A pair of vertices $\{z,y\}$ with
$z\in N(x_1)\setminus \{x_2\}$ and $y\in N(x_2)\setminus \{x_1, z\}$ is called {\it feasible} if there exits a coloring $\phiv\in \mathcal{C}^{\D}(G-x_1x_2)$ such that $\varphi(x_1z)\in \bar{\varphi}(x_2)$ and $\varphi( x_2y)\in \bar{\varphi}(x_1)$, and such a coloring $\phiv$ is called a {\it $zy$-feasible coloring}. Denote by
 $\mathcal{C}_{zy}$ the set of all $zy$-feasible colorings.
 For each  $\phiv\in \mathcal{C}_{zy}$, let
\begin{eqnarray*}
Z(\phiv)  & = & \{v\in N(z)\setminus\{x_1\} \,:\,  \varphi(vz)\in \bar{\varphi}(x_1)\cup \bar{\varphi}(x_2)\cup \bar{\varphi}(y)\}, \\
C_z(\phiv) & = & \{\varphi(vz) \,:\, v\in Z(\phiv) \mbox{ and } d(v) < q\}, \\
Y(\phiv) & =& \{v\in N(y)\setminus\{x_2\}\,:\,  \varphi(vy)\in \bar{\varphi}(x_1)\cup \bar{\varphi}(x_2)\cup \bar{\varphi}(z)\}, \mbox{ and } \\
C_y(\phiv)  & = & \{\varphi(vy) \,:\,  v\in Y(\phiv) \mbox{ and } d(v) < q\}.
\end{eqnarray*}

Note that $Z(\phiv)$ and $Y(\phiv)$ are vertex sets while
$C_z(\phiv)$ and $C_y(\phiv)$ are color sets.
For each color $k\in \phiv(z)$, let $z_k \in N(z)$ such that $\phiv(zz_k) = k$. Similarly, we define $y_k$ for each $k\in \phiv(y)$. Let
$T_0(\phiv) = \{k \in \phiv(x_1)\cap \phiv(x_2)\cap\phiv(y)\cap \phiv(z) \,:\,
d(y_k) < q \mbox{ and } d(z_k) < q \}.$

Since $d(x_1)+d(x_2)<\frac{3}{2}\D$ and $\delta(G)\geq \frac{\Delta}{2}$, we have $d(x_1)<\Delta$ and $d(x_2)<\Delta$.
 We  assume that $\varphi(x_1z)=1$
 and $\varphi( x_2y)=2$.

First, we claim that $\phibar(x_1)$, $\phibar(x_2)$, $\phibar(y)$, $\phibar(z)$ and $\phiv(x_1)\cap\phiv(x_2)\cap \phiv(y)\cap \phiv(z)$ are mutually exclusive.
Let $\varphi_1$ be obtained from $\varphi$ by uncoloring $x_1z$ and coloring $x_1x_2$ with the color $1$, then $\{z,x_1,x_2,y\}$ forms a Kierstead path with respect to $\phiv_1$. By Lemma \ref{p4}, $\{z,x_1,x_2,y\}$ is elementary with respect to $\varphi_1$ as $d(x_1)<\D$. It follows that $\phibar_1(y)\cap \phibar_1(z)=\emptyset$, that is, $\phibar(y)\cap (\phibar(z)\cup \{1\})=\emptyset$. Hence, $\phibar(y)\cap \phibar(z)=\emptyset$.
Since $G$ is edge-$\D$-critical, $\{x_1, x_2, z\}$ and $\{x_1, x_2, y\}$ are both elementary with respect to $\phiv$.
The claim holds and
$$
\phibar(x_1)\cup \phibar(x_2)\cup \phibar(y) \cup \phibar(z)\cup (\phiv(x_1)\cap \phiv(x_2)\cap\phiv(y)\cap \phiv(z)) = \{1, 2, \dots, \D\}.
$$

Recall that  $\sigma_q(x_2,y)$ and $\sigma_q(x_1,z)$ are number of vertices with degree $\ge q$ in $N(y)\setminus\{x_2\}$ and $N(z)\setminus \{x_1\}$, respectively. So, the following inequalities hold.
\begin{eqnarray*}
&&\sigma_q(x_2,y) + \sigma_q(x_1,z) \\
& \ge & |Y(\phiv)|-|C_y(\phiv)| + |Z(\phiv)| -|C_z(\phiv)| + |\phiv(x_1)\cap \phiv(x_2)\cap \phiv(y)\cap\phiv(z)| -|T_0(\phiv)|\\
& = & |\phibar(x_1)\cup \phibar(x_2)\cup\phibar(z)| -1+ |\phibar(x_1)\cup \phibar(x_2)\cup\phibar(y)|-1 \\
& &+
|\phiv(x_1)\cap \phiv(x_2)\cap\phiv(y)\cap \phiv(z)| - |C_y(\phiv)| -|C_z(\phiv)| -|T_0(\phiv)|\\
& = & \D + |\phibar(x_1)| + |\phibar(x_2)|- |C_y(\phiv)| -|C_z(\phiv)| -|T_0(\phiv)|-2\\
& = &
3\D -d(x_1)-d(x_2) -|C_y(\phiv)| -|C_z(\phiv)| -|T_0(\phiv)|
\end{eqnarray*}
So, Lemma~\ref{gen broom} follows the two statements below.
\begin{itemize}
\item [{\bf I.}] For any $\phiv\in \mathcal{C}_{zy}$, $|C_z(\phiv)|< \frac{d(x_1)+d(x_2)-\Delta-2}{\Delta-q}$ and $|C_y(\phiv)| < \frac{d(x_1)+d(x_2)-\Delta-2}{\Delta-q}$;
\item [{\bf II.}] there exists a $\phiv\in \mathcal{C}_{zy}$ such that
$|T_0(\phiv)| \le \frac{4(d(x_1)+d(x_2)-\Delta)}{\Delta-q}+\frac{8(d(x_1)+d(x_2)-\Delta-2)}{(\Delta-q)^2}$.
\end{itemize}

In the remainder of the proof, we let $Z=Z(\phiv)$, $Y=Y(\phiv)$, $C_z = C_z(\phiv)$,  $C_y = C_y(\phiv)$ and $T_0 = T_0(\phiv)$ if the coloring $\phiv$ is clearly referred. Let $R=C_z\cup C_y$ and $R^\prime=C_z\cup C_y\cup \{1,2\}$. A  coloring $\phiv\in \mathcal{C}_{zy}$ is called {\it optimal} if over all feasible colorings the followings hold:

1. $|C_z|+|C_y|$ is maximum;

2. subject to 1, $|C_z\cap C_y|$ is minimum.

\subsubsection{Proof of {\bf I}.}

{\flushleft \bf Claim A1.}
For every $\varphi  \in \mathcal{C}_{zy}$,
the sets $\{x_1,x_2,z\}\cup Z$ and $\{x_1,x_2,y\}\cup Y$ are both elementary with respect to $\varphi$.

\begin{proof}
We first show that
\begin{equation}\label{eq3}
\mbox{\ for any $v\in Z$, $\{x_1,x_2,z,v\}$ is elementary with respect to $\varphi$.\ }
\end{equation}
This is clearly true if $\varphi(vz)\in \phibar(x_1)\cup\phibar(x_2)$. We now assume $\varphi(vz)\in \bar{\varphi}(y)$.
Since $\bar{\varphi}(y)\supseteq \{1, \varphi(vz)\}$, we have $d(y)<\Delta$.
Suppose $\{x_1,x_2,z,v\}$ is not elementary, let $\eta\in \bar{\varphi}(v)\cap (\bar{\varphi}(x_1)\cup\bar{\varphi}(x_2)\cup \bar{\varphi}(z))$. We let $\varphi(vz)=k$.
Note that $\bar{\varphi}(x_1)\cap \bar{\varphi}(y)=\emptyset$, we have $k\in \varphi(x_1)$.

First we consider the case $\eta\in\bar{\varphi}(x_1)$. Since $\bar{\varphi}(x_1)\cap \bar{\varphi}(y)=\emptyset$, we have $\eta \in \varphi(y)$. Thus $P_{x_1}(\eta,k,\varphi)=P_{y}(\eta,k,\varphi)$, for otherwise, let $\phiv_1=\phiv/P_{x_1}(\eta,k,\varphi)$, $\{x_1,x_2,y\}$ is not elementary with respect to $\phiv_1$ since $k\in \phibar_1(x_1)\cap \phibar_1(y)$, giving a contradiction.
Let $\varphi_1=\varphi/ P_{x_1}(\eta,k,\varphi)$. Then $k\in \phibar_1(x)$ and $\eta\in \phibar_1(y)$. Let $\varphi_2$ be obtained from $\varphi_1$ by uncoloring $vz, zx_1$ and coloring $zx_1, x_1x_2$ with color $k, 1$, respectively. Then $\eta\in \phibar_2(y)$, $k,\eta\in \phibar_2(v), 1\in \phibar_2(z)$ and $2\in \phibar_2(x_1)$. So $\{v,z,x_1,x_2,y\}$ forms a Kierstead path with respect to $\phiv_2$. Since $x_1,x_2,y$ are $(<\Delta)$-vertices, by Lemma \ref{plong}, the set $\{v,z,x_1,x_2,y\}$ is elementary with respect to $\varphi_2$, this contradicts with the fact that $\eta\in \bar{\varphi_2}(v)\cap \bar{\varphi_2}(y)$.

Then suppose $\eta\in \bar{\varphi}(x_2)\cup \bar{\varphi}(z)$ and
$\bar{\varphi}(v)\cap \bar{\varphi}(x_1)=\emptyset$. Since $d(x_1)<\Delta$, there exists a color $\delta\in \bar{\varphi}(x_1)$. Clearly, $\delta\in \varphi(v)$.
Since $\{z,x_1,x_2\}$ is elementary with respect to $\phiv$, we have $\delta\in \varphi(x_2)\cap \varphi(z)$. Since $v,x_1$ and one of $\{x_2,z\}$ are endvertices of $(\alpha,\beta)$-chains, we claim that $P_{v}(\delta,\eta,\varphi)\neq P_{x_1}(\delta,\eta,\varphi)$. For otherwise, $P_{z}(\delta,\eta,\varphi)\neq P_{x_1}(\delta,\eta,\varphi)$ or $P_{x_2}(\delta,\eta,\varphi)\neq P_{x_1}(\delta,\eta,\varphi)$, let $\phiv_1=\phiv/P_{x_1}(\delta,\eta,\varphi)$, then $\phibar_1(x_1)\cap \phibar_1(z)\neq \emptyset$ or $\phibar_1(x_1)\cap \phibar_1(x_2)\neq \emptyset$, this contradicts with the fact that $\{x_1,x_2,z\}$ is elementary with respect to $\phiv_1$.
Let $\varphi_1=\varphi/ P_{x_1}(\eta,\delta,\varphi)$. Then $\eta\in \bar{\varphi}(x_1)\cap \bar{\varphi}(v)$, a contradiction.

We now show that
\begin{equation}\label{eq4}
\mbox{\ $\bar{\varphi}(v)\cap\bar{\varphi}(v^\prime)=\emptyset$ \ for each\  $v,v^\prime\in Z$.\ }
\end{equation}
If not, let $\alpha\in \bar{\varphi}(v)\cap\bar{\varphi}(v^\prime)$. Since $d(x_1)<\Delta$, we may assume that $\beta\in \bar{\varphi}(x_1)$. Then by (\ref{eq3}), $\beta\in \varphi(v)\cap\varphi(v^\prime)$ and $\alpha\in \phiv(x_1)$.
Then there exists a vertex in $\{v,v^\prime\}$ not in the path $P_{x_1}(\alpha,\beta,\varphi)$. Let $\varphi_1=\varphi/ P_{x_1}(\alpha,\beta,\varphi)$. Then $\alpha\in \bar{\varphi}(v)\cap\bar{\varphi}(x_1)$ or $\alpha\in \bar{\varphi}(v^\prime)\cap\bar{\varphi}(x_1)$, this contradicts with (\ref{eq3}).

By (\ref{eq3}) and (\ref{eq4}), we have $\{x_1,x_2,z\}\cup Z$ is elementary with respect to $\varphi$.
By the symmetry of $y$ and $z$, we have $\{x_1,x_2,y\}\cup Y$ is elementary with respect to $\varphi$.
\end{proof}

By Claim A1, $\{x_1,x_2,z\}\cup Z$ and $\{x_1,x_2,y\}\cup Y$ are both elementary with respect to $\varphi$. Consequently, by the definitions of $C_y$ and $C_z$, we have
$(\Delta-q)|C_z|+|\phibar(x_1)|+|\phibar(x_2)|+|\phibar(z)|<\sum_{v\in\{x_1,x_2,z\}\cup Z}\phibar(v)\le \Delta$ and $(\Delta-q)|C_y|+|\phibar(x_1)|+|\phibar(x_2)|+|\phibar(y)|<\sum_{v\in\{x_1,x_2,y\}\cup Y}\phibar(v)\le \Delta$. It follows that $|C_z|< \frac{d(x_1)+d(x_2)-\Delta-2}{\Delta-q}$ and $|C_y|< \frac{d(x_1)+d(x_2)-\Delta-2}{\Delta-q}$. Hence, \textbf{I} holds.

\subsubsection{Proof of {\bf II}.}

Suppose, on the contrary: $|T_0| > \frac{4(d(x_1)+d(x_2)-\Delta)}{\Delta-q}+\frac{8(d(x_1)+d(x_2)-\Delta-2)}{(\Delta-q)^2}$ for every $\phiv\in \mathcal{C}_{zy}$.
Let $T_0=\{k_1,\cdots,k_{|T_0|}\}$ and
$V_{T_0}=\{z_{k}\in N(z)\,:\,k\in T_0\}\cup\{y_{k}\in N(y)\,:\,k\in T_0\}$.
Let $\phiv$ be an optimal feasible coloring.

{\flushleft \bf Claim B1.}
For each $i\in \bar{\varphi } (x_2)\setminus (R\cup \{1\})$ and
$k\in T$, $P_{x_2}(i,k,\varphi )$ contains both $y$ and $z$; and for each $i\in \bar{\varphi } (x_1)\setminus (R\cup \{2\})$ and
$k\in T$,  $P_{x_1}(i,k,\varphi )$ contains both $y$ and $z$.

\begin{proof}
By symmetry, we only show the first part of statement.
Let $u\in \{y,z\}$ and $v\in \{y,z\}\setminus \{u\}$. Suppose $u\notin V(P_{x_2}(i,k,\varphi ))$. Then $P_u(i, k, \phiv)$ is disjoint with $P_{x_2}(i, k, \phiv)$. Let $\varphi^\prime=\varphi / P_u(i,k,\varphi )$.  Since $i, k\ne 1,2$, $\phiv^\prime$ is also feasible. Since colors in $R$ are unchanged and $d(u_k)< q$, we have $C_u(\phiv^\prime) = C_u\cup \{i\}$ and $C_v(\phiv^\prime) \supseteq C_v$, giving a contradiction to
the maximality of $|C_y| + |C_z|$.
Thus $P_{x_2}(i,k,\varphi )$ contains both $y$ and $z$.
\end{proof}

{\flushleft \bf Claim C1.}  If there exist three vertices $u_1,u_2,u_3\in V_{T_0}$ and two colors $\alpha\notin (\phibar(x_1)\cup \phibar(x_2))\setminus R^\prime$, $\beta\in (\phibar(x_1)\cup \phibar(x_2))\setminus R^\prime$ with $\alpha\in \bar{\varphi}(u_1)\cap \bar{\varphi}(u_2)\cap \bar{\varphi}(u_3)$ and $\beta\in \varphi(u_1)\cap \varphi(u_2)\cap \varphi(u_3)$, then there exists a vertex $u\in \{u_1,u_2,u_3\}$ and an optimal feasible coloring $\varphi^\prime=\varphi/ P_{u}(\alpha,\beta,\varphi)$ such that $\beta\in \bar{\varphi^\prime}(u)$.

\begin{proof}
Since $x_1$ and $x_2$ are symmetric, we  assume $\beta\in \phibar(x_1)$. Note that $\phibar(x_1)$, $\phibar(x_2)$, $\phibar(y)$, $\phibar(z)$ are mutually exclusive, we have $\beta\in \phiv(x_2)\cap\phiv(z)\cap \phiv(y)$.
Since $\alpha\notin (\phibar(x_1)\cup \phibar(x_2))\setminus R^\prime$, we have $\alpha\in (\varphi (x_1)\cap\varphi (x_2))\setminus R^\prime$ or $\alpha\in R^\prime$.

We first consider the case of $\alpha\in (\phiv(x_1)\cap \phiv(x_2))\setminus R^\prime$.
Since $\alpha\in \bar{\varphi}(u_1)\cap \bar{\varphi}(u_2)\cap \bar{\varphi}(u_3)$, we may assume that $P_{x_1}(\alpha,\beta,\phiv)$ is disjoint with $P_{u_1}(\alpha,\beta,\phiv)$.
Let $\varphi_1=\varphi/ P_{u_1}(\alpha,\beta,\varphi)$. Since $\alpha, \beta\notin R^\prime$, $\phiv_1$ is still feasible, $C_y(\phiv_1) = C_y$ and $C_z(\phiv_1) = C_z$. So, $\phiv_1$ is also optimal and $\beta\in \bar{\varphi_1 }(u_1)$.

We now suppose $\alpha\in R\setminus \{1,2\}$. By the definition of $R$, there exists a vertex $v\in \{x_1,x_2,y,z\}$ such that $\alpha\in \phibar(v)$.
It is easy to see that $P_{v}(\alpha,\beta,\varphi)=P_{x_1}(\alpha,\beta,\varphi)$ when $v=x_1$. We claim that $P_{v}(\alpha,\beta,\varphi)=P_{x_1}(\alpha,\beta,\varphi)$ when $v\ne x_1$.
Otherwise, let $\phiv_1=\phiv/P_{x_1}(\alpha,\beta,\varphi)$, then $\alpha\in \phibar_1(v)\cap \phibar_1(x_1)$, this  contradicts  that $\{x_1,x_2,y\}$ and $\{x_1,x_2,z\}$ both are elementary with respect to $\phiv_1$.
Thus $V(P_{x_1}(\alpha,\beta,\phiv))\cap \{u_1,u_2,u_3\}=\emptyset$.
Let $w$ be a vertex of $\{y,z\}$ such that $\alpha\in C_w$.
We may assume that $P_{u_1}(\alpha,\beta,\phiv)$ is disjoint with $P_w(\alpha,\beta,\phiv)$.
If $\alpha\notin C_z\cap C_y$, then $\varphi_1=\varphi/ P_{u}(\alpha,\beta,\varphi)$ is the required coloring such that $\beta\in \bar{\varphi_1 }(u_1)$. Now we suppose that $\alpha\in C_z\cap C_y$. Thus
$\alpha\in \phibar(x_1)\cup \phibar(x_2)$.
We claim that $P_{z}(\alpha,\beta,\varphi)=P_{y}(\alpha,\beta,\varphi)$. For otherwise, let $\varphi_1=\varphi/ P_{z}(\alpha,\beta,\varphi)$, we have $\phiv_1(x_1)=\phiv(x_1)$, $\phiv_1(x_2)=\phiv(x_2)$
or
$\phiv_1(x_1)=(\phiv(x_1)\cup \{\beta\})\setminus\{\alpha\}$, $\phiv_1(x_2)=(\phiv(x_2)\cup \{\alpha\})\setminus\{\beta\}$. Clearly,
$Z(\phiv_1)=Z$ and $Y(\phiv_1)=Y$. Thus $C_z(\phiv_1)=(C_z\cup \{\beta\})\setminus \{\alpha\}$ and $C_y(\phiv_1)=C_y$, which implies that
$|C_z(\phiv_1)|+|C_y(\phiv_1)|=|C_z|+|C_y|$ and $|C_z(\phiv_1)\cap C_y(\phiv_1)|=|C_z\cap C_y|-1$, which leads a contradiction to the minimality of $|C_z\cap C_y|$.
Thus
$\varphi_1=\varphi/ P_{u_1}(\alpha,\beta,\varphi)$ is feasible, $C_y(\phiv_1) = C_y$ and $C_z(\phiv_1) = C_z$.
So $\varphi_1$ is also optimal and $\beta\in \bar{\varphi_1 }(u_1)$.

We finally suppose that $\alpha\in \{1,2\}$.
If $\alpha=1$, then $P_{z}(\alpha,\beta,\varphi)=P_{x_1}(\alpha,\beta,\varphi)=P_{x_2}(\alpha,\beta,\varphi)$.
It is easy to see that $P_{y}(\alpha,\beta,\varphi)$ contains at most two vertices of $\{u_1,u_2,u_3\}$.
Thus there still exists one vertex, say $u_1$, such that $V(P_{u_1}(\alpha,\beta,\varphi))\cap\{x_1,x_2,y,z\}=\emptyset$.
Let $\varphi_1=\varphi/ P_{u_1}(\alpha,\beta,\varphi)$. Thus $\varphi_1$ is  an optimal feasible coloring and $\beta\in \bar{\varphi_1 }(u_1)$.

Then $\alpha=2$.  Clearly, $\{x_1,x_2,y,z\}\subseteq V(P_{x_2}(1,2,\varphi))$. Interchange the colors $1$ and $2$ on all edges not in $P_{x_2}(1,2,\varphi)$ to produce a coloring $\varphi_1$. Then $1\notin \varphi_1(u_i)$ for each $i\in\{1,2,3\}$.
Since $G$ is edge-$\D$-critical, we have $P_{x_1}(\beta,1,\varphi_1)=P_{x_2}(\beta,1,\varphi_1)$. Since $\phiv(zx_1)=1$, we have $P_{x_2}(\beta,1,\varphi_1)$ contains $z$. Note that $P_{y}(1,\beta,\varphi_1)$ contains at most two vertices of $\{u_1,u_2,u_3\}$.
Thus there still exists one vertex, say $u_1$, such that $V(P_{u_1}(1,\beta,\varphi_1))\cap\{x_1,x_2,y,z\}=\emptyset$.
Let $\varphi_2=\varphi_1/ P_{u_1}(1,\beta,\varphi_1)$. Thus $\varphi_2$ is feasible and $C_y(\phiv_2)=C_y$ and $C_z(\phiv_2)=C_z$. So $\varphi_2$ is still optimal and $\beta\in \bar{\varphi_2 }(u_1)$.
\end{proof}

Recall that $|T_0| > \frac{4(d(x_1)+d(x_2)-\Delta)}{\Delta-q}+\frac{8(d(x_1)+d(x_2)-\Delta-2)}{(\Delta-q)^2}$. Let $T=\{k_1, \cdots, k_{|T|}\}$ be a subset of $T_0$ with $|T|= \lfloor\frac{4(d(x_1)+d(x_2)-\Delta)}{\Delta-q}+\frac{8(d(x_1)+d(x_2)-\Delta-2)}{(\Delta-q)^2}\rfloor+1$.
Let \begin{eqnarray*}
V_T & = &\{z_{k_1},z_{k_2},\cdots, z_{k_{|T|}}\}\cup\{y_{k_1},y_{k_2},\cdots, y_{k_{|T|}}\},\\
W(\phiv) & = & \{u\in V_T\,:\, \phibar(u)\cap ((\phibar(x_1)\cup \phibar(x_2))\setminus R^\prime)=\emptyset\},\\
M(\phiv) & = & V_T-W(\phiv)=\{u\in V_T \,:\, \phibar(u)\cap((\phibar(x_1)\cup \phibar(x_2))\setminus R^\prime)\neq \emptyset\},\\
E_T & = & \{zz_{k_1}, zz_{k_2},\cdots, zz_{k_{|T|}}, yy_{k_1},
yy_{k_2},\cdots, yy_{k_{|T|}}\},\\
E_W(\phiv) & = & \{e\in E_T \,:\,  \mbox{ $e$ is incident to a vertex in $W(\phiv)$}\},\mbox{ and } \\
E_M(\phiv) & = & E_T-E_W(\phiv)=\{e\in E_T\,:\, \mbox{ $e$  is incident to a vertex
in $M(\phiv)$} \}.
\end{eqnarray*}
And let $W=W(\phiv),M=M(\phiv),E_W=E_W(\phiv)$ and $E_M=E_M(\phiv)$ if the coloring $\phiv$ is clearly referred.

We assume that $|E_M|$ is maximum over all optimal feasible colorings $\phiv$ and all subsets of $T_0$ with order $|T|$. For each $v\in M$, pick a color $\alpha_v\in \phibar(v)\cap ((\phibar(x_1)\cup \phibar(x_2))\setminus R^\prime)$. Let $C_M=\{\alpha_v \,:\, v\in M\}$. Clearly, $|C_M| \le |M|$.  Note that $\{z_{k_1},z_{k_2},\cdots, z_{k_{|T|}}\}\cap\{y_{k_1},y_{k_2},\cdots, y_{k_{|T|}}\}$ may be not empty, we have $\frac{|E_W|}{2}\leq |W|\leq |E_W|$ and $\frac{|E_M|}{2}\leq |M|\leq |E_M|$.

Since $|T|=\lfloor\frac{4(d(x_1)+d(x_2)-\Delta)}{\Delta-q}+\frac{8(d(x_1)+d(x_2)-\Delta-2)}{(\Delta-q)^2}\rfloor+1$,
by \textbf{I}, we have
\begin{equation}\label{ineq 1}
(\D-q)\left\lceil\frac{|T|}{2}\right\rceil\geq 2(d(x_1)+d(x_2)-\Delta)+2|R|
\geq 2(\D-|(\phibar(x_1)\cup \phibar(x_2))\setminus R^\prime|).
\end{equation}
Since $d(x_1)+d(x_2)<\frac{3}{2}\D$ and $d(x_1),d(x_2)$ are integers, we have $d(x_1)+d(x_2)\leq \frac{3\Delta-1}{2}$. Since $q\leq \Delta-10$, by \textbf{I}, we have $|(\phibar(x_1)\cup \phibar(x_2))\setminus R^\prime|>2\Delta-d(x_1)-d(x_2)-\frac{2(d(x_1)+d(x_2)-\Delta-2)}{\Delta-q}\geq \frac{2\Delta}{5}+1$
and $|T|+\frac{2(\Delta-|(\phibar(x_1)\cup \phibar(x_2))\setminus R^\prime|)}{\Delta-q-1}\leq \frac{\Delta-1}{5}+\frac{\Delta-5}{25}+\frac{6\Delta-2}{45}+1<\frac{2\Delta}{5}+1$.
So
\begin{equation}\label{ineq 2}
|(\phibar(x_1)\cup \phibar(x_2))\setminus R^\prime|>|T|+\frac{2(\Delta-|(\phibar(x_1)\cup \phibar(x_2))\setminus R^\prime|)}{\Delta-q-1}.
\end{equation}

{\flushleft \bf Claim D1.} Suppose no color $k\in T$ such that there exist  three distinct colors $i,j,l\in (\phibar(x_1)\cup \phibar(x_2))\setminus R^\prime$ with $i\in  \bar{\varphi }(z_k)$ and $j,l\in  \bar{\varphi }(y_k)$.
Then there exists an optimal feasible coloring such that $|E_M|\geq |T|+\frac{2(\Delta-|(\phibar(x_1)\cup \phibar(x_2))\setminus R^\prime|)}{\Delta-q-1}.$

\begin{proof}
On the contrary, suppose $|E_M|< |T|+\frac{2(\Delta-|(\phibar(x_1)\cup \phibar(x_2))\setminus R^\prime|)}{\Delta-q-1}.$
Since $|C_M|\le |E_M|$, by (\ref{ineq 2}), there exists a color $\beta\in (\phibar(x_1)\cup \phibar(x_2))\setminus (R^\prime\cup C_M)$. Clearly, $\beta\in \phiv(u)$ for each $u\in W$.

First we claim that $|E_M|\geq |T|+1$. If not, then $|E_M|\leq |T|$ and $|E_W|\geq |T|$ since $|E_M|+|E_W|=2|T|$. Thus by (\ref{ineq 1}), we have $\sum_{v\in W}|\bar{\varphi}(v)|> (\Delta-q)\left\lceil\frac{|T|}{2}\right\rceil>
2(\Delta-|(\phibar(x_1)\cup \phibar(x_2))\setminus R^\prime|)$, which implies that
there exist three vertices $u_1,u_2,u_3\in W$ and a color $\alpha\notin(\phibar(x_1)\cup \phibar(x_2))\setminus R^\prime$ such that $\alpha\in \bar{\varphi}(u_1)\cap \bar{\varphi}(u_2)\cap \bar{\varphi}(u_3)$.
We note that $\beta\in \varphi(u_1)\cap \varphi(u_2)\cap \varphi(u_3)$ since $u_1,u_2,u_3\in W$.
Applying Claim C1 with colors $\alpha$ and $\beta$, we find a vertex $u\in \{u_1,u_2,u_3\}$ and an optimal feasible coloring $\varphi^\prime=\varphi/ P_{u}(\alpha,\beta,\varphi)$ such that $\beta\in \bar{\varphi^\prime}(u)$. Since $u\in W$ and $\beta\notin C_M$, we have $|E_M(\phiv^\prime)|>|E_M|$, giving a contradiction to the maximality of $|E_M|$.

We may assume that $|E_M|=|T|+p$, thus $|E_W|=|T|-p$. Since $|E_M|=|T|+p$, there exists a $p$-element set $Y_1=\{y_{k}\, :\,k\in T \mbox{\ and both } z_{k} \mbox{\ and } y_{k}\in M \}$.
Since there does not exist colors $k\in T$ such that there exist $i,j,l\in (\phibar(x_1)\cup \phibar(x_2))\setminus R^\prime$ with $i\in  \bar{\varphi }(z_k)$, $j,l\in  \bar{\varphi }(y_k)$, we have $|((\phibar(x_1)\cup \phibar(x_2))\setminus R^\prime)\cap \bar{\varphi }(y_{k})|=1$ for each $y_{k}\in Y_1$ since $\bar{\varphi }(y_{k})\cap \bar{\varphi }(z_{k})\cap ((\phibar(x_1)\cup \phibar(x_2))\setminus R^\prime )=\emptyset$. Thus by (\ref{ineq 1}), $\sum_{v\in W\cup Y_1}|\bar{\varphi}(v)\setminus ((\phibar(x_1)\cup \phibar(x_2))\setminus R^\prime)|> (\Delta-q)\left\lceil\frac{|T|-p}{2}\right\rceil+(\Delta-q-1)p\geq (\Delta-q)\left\lceil\frac{|T|}{2}\right\rceil+(\frac{\Delta-q}{2}-1)p>
2(\Delta-|(\phibar(x_1)\cup \phibar(x_2))\setminus R^\prime|)$.
Hence there exist three vertices $u_1,u_2,u_3\in W\cup Y_1$ and a color $\alpha\notin (\phibar(x_1)\cup \phibar(x_2))\setminus R^\prime$ such that $\alpha\in \bar{\varphi}(u_1)\cap \bar{\varphi}(u_2)\cap \bar{\varphi}(u_3)$.
Since $\beta\notin C_M$ and $|((\phibar(x_1)\cup \phibar(x_2))\setminus R^\prime)\cap \bar{\varphi }(y_{k})|=1$ for each $y_{k}\in Y_1$, we have $\beta\in \varphi(u_1)\cap \varphi(u_2)\cap \varphi(u_3)$. Thus by Claim C1, there exists a vertex $u\in \{u_1,u_2,u_3\}$ and an optimal feasible coloring $\varphi^\prime=\varphi/ P_{u}(\alpha,\beta,\varphi)$ such that $\beta\in \bar{\varphi^\prime}(u)$. Thus $u\in M(\phiv^\prime)$.
If $u\in W$, then $|E_M(\phiv^\prime)|>|E_M|$, a contradiction. Thus $u\in Y_1$. We may assume that $u=y_{k_a}$. So there exists three distinct colors $i,j,\beta\in (\phibar^\prime(x_1)\cup \phibar^\prime(x_2))\setminus R^\prime$ such that $i\in  \bar{\varphi^\prime}(z_{k_a})$, $j,\beta\in  \bar{\varphi^\prime}(y_{k_a})$, a contradiction.
\end{proof}

{\flushleft \bf Claim E1.} There exist a color $k\in T$ and three distinct colors $i,j,l\in (\phibar(x_1)\cup \phibar(x_2))\setminus R^\prime$ such that $i\in  \bar{\varphi }(z_k)$, $j,l\in  \bar{\varphi }(y_k)$.

\begin{proof}
We first note that if there exist $i,j\in (\phibar(x_1)\cup \phibar(x_2))\setminus R^\prime$ such that $i\in  \bar{\varphi }(z_k)$, $j\in  \bar{\varphi }(y_k)$, then $i\neq j$; for otherwise, by Claim B1, the path $P_{x_2}(i,k,\varphi )$ or $P_{x_1}(i,k,\varphi )$ contains three endvertices, a contradiction.

Suppose that Claim E1 does not hold.
By Claim D1, $|E_M|\geq |T|+\frac{2(\Delta-|(\phibar(x_1)\cup \phibar(x_2))\setminus R^\prime|)}{\Delta-q-1}$.
So there exists a $\lceil\frac{2(\Delta-|(\phibar(x_1)\cup \phibar(x_2))\setminus R^\prime|)}{\Delta-q-1}\rceil$-elements set
$Y_1=\{y_{k}\,:\,k\in T  \mbox{\ and both } z_{k}\mbox{\ and } y_{k}\in M\}$ such that $|((\phibar(x_1)\cup \phibar(x_2))\setminus R^\prime)\cap \bar{\varphi }(y_{k})|=1$ for each $y_{k}\in Y_1$.
Thus we have $\sum_{v\in Y_1}|\bar{\varphi}(v)\setminus ((\phibar(x_1)\cup \phibar(x_2))\setminus R^\prime)|> (\Delta-q-1)|Y_1|= 2(\Delta-|(\phibar(x_1)\cup \phibar(x_2))\setminus R^\prime|)$.
Thus there exist three vertices $u_1,u_2,u_3\in Y_1$ and a color $\alpha\notin (\phibar(x_1)\cup \phibar(x_2))\setminus R^\prime$ such that $\alpha\in \bar{\varphi}(u_1)\cap \bar{\varphi}(u_2)\cap \bar{\varphi}(u_3)$.
For each $y_{k}\in Y_1$, pick a color $\alpha_{k1}\in \phibar(y_{k})\cap (\phibar(x_1)\cup \phibar(x_2))\setminus R^\prime$ and $\alpha_{k2}\in \phibar(z_{k})\cap (\phibar(x_1)\cup \phibar(x_2))\setminus R^\prime$. Let $C_M^\prime=\{\alpha_{kj} \,:\, y_{k}\in Y_1 \mbox{\ and } 1\le j\le 2\}$. Clearly, $|C_M^\prime|=2|Y_1|$.
By (\ref{ineq 2}), we have $|(\phibar(x_1)\cup \phibar(x_2))\setminus R^\prime|>|C_M^\prime|$, thus there exists a color
$\beta\in (\phibar(x_1)\cup \phibar(x_2))\setminus (R^\prime\cup C_M^\prime)$ such that $\beta\in  \varphi(y_{k})$ for all $y_{k}\in Y_1$.
So $\beta\in \varphi(u_1)\cap \varphi(u_2)\cap \varphi(u_3)$.

Applying Claim C1 with colors $\alpha$ and $\beta$, we find a vertex $u\in \{u_1,u_2,u_3\}$ and an optimal feasible coloring $\varphi^\prime=\varphi/ P_{u}(\alpha,\beta,\varphi)$ such that $\beta\in \bar{\varphi^\prime}(u)$. Then $u\in M(\phiv^\prime)$. Hence, there exists a color $k_a\in T$ with $u=y_{k_a}$ and colors $i,j,\beta\in (\phibar^\prime(x_1)\cup \phibar^\prime(x_2))\setminus R^\prime$ such that $i\in  \bar{\varphi^\prime}(z_{k_a})$, $j,\beta\in  \bar{\varphi^\prime}(y_{k_a})$, giving a contradiction.
\end{proof}

Let $k,i,j,l$ be as stated in Claim E1 and assume  $d(x_2)\le d(x_1)$. First we claim that there is an optimal feasible coloring $\phiv$ such that $i,j,l\in \bar{\varphi }(x_2)\setminus (R\cup \{1\})$.
For otherwise, we may assume $i\in \bar{\varphi }(x_1)\setminus (R\cup \{2\})$.
Since $d(x_2)< \frac{3}{4}\D$, there exists a color $\delta\in \bar{\varphi }(x_2)\setminus (R\cup \{1\})$ such that $\delta\notin \{j,l\}$.
Since $G$ is edge-$\D$-critical, we have $P_{x_1}(i,\delta,\varphi)=P_{x_2}(i,\delta,\varphi)$. Let $\varphi_1=\varphi/ P_{x_1}(i,\delta,\varphi)$. Then $\phiv_1$ is feasible and $i\in \bar{\varphi_1 }(x_2)\setminus (R\cup \{1\})$. Since $R$ does not change, $\phiv_1$ is still optimal.
Since $q\le \D-10$ and $q$ is a positive number, we have $\D\ge 11$. Thus $|\bar{\varphi }(x_2)\setminus (R\cup \{1\})|>\frac{1}{4}\D+1-\frac{\D-4}{10}-1\ge 2$. So $|\bar{\varphi }(x_2)\setminus (R\cup \{1\})|\ge 3$. Using the same method above, we can find an optimal feasible coloring $\phiv^*$ such that $i,j,l\in \bar{\varphi^*}(x_2)\setminus (R\cup \{1\})$.

Since $\{x_1,x_2,y\}$ is elementary with respect to $\phiv$ and $i,j,l\in \phibar(x_2)$, we have $i,j,l\in \phiv(x_1)\cap \phiv(y)$. By Claim B1, we have $P_{x_2}(i,k,\phiv), P_{x_2}(j,k,\phiv)$ both contain $y_k$ and $z_k$, thus $j\in \phiv(z_k)$ and $i\in \phiv(y_k)$. If $2\in \phibar(y_k)$, we let $\phiv_1=\phiv$. Otherwise, we let $\phiv_1$ be obtained from $\phiv$ by interchange the color $2$ and $l$ on all edges not in $P_{x_2}(2,l,\varphi )$. Since $G$ is edge-$\D$-critical, we have $P_{x_1}(2,l,\varphi )=P_{x_2}(2,l,\varphi )$. Thus $2\in \phibar_1(y_k)$.

Let $\varphi_2$ be the coloring obtained from $\varphi_1$ by uncoloring $x_2y$ and coloring $x_1x_2$ with color $2$. Then $2\in \bar{\varphi}_2(y)$.

Let $\phiv_3$ be obtained from $\phiv_2$ by assigning $\phiv_3(yy_k)=2$. Now $k$ is missing both $y$ and $y_k$ and $i$ is still missing at $z_k$. Since $G$ is not $\D$-colorable, $P_{x_2}(i,k,\varphi_3 )=P_{y}(i,k,\varphi_3 )$; otherwise, $\phiv_3/P_{y}(i,k,\varphi_3 )$ can be extended to an edge-$\D$-coloring of $G$, giving a contradiction. Furthermore, $z_k,y_k\notin P_{x_2}(i,k,\varphi_3 )$ since either $i$ or $k$ is missing at these two vertices, which additionally shows that $z\notin P_{x_2}(i,k,\varphi_3 )$ since $\phiv_3(zz_k)=k$.

Let $\varphi _4=\varphi _3/ P_{x_2}(i,k,\varphi_3 )$.
We have $k\in \bar{\varphi _4}(x_2), i \in \bar{\varphi _4}(y)\cap\bar{\varphi _4}(z_k)$ and $ j\in \bar{\varphi _4}(x_2)\cap\bar{\varphi _4}(y_k)$.
Since $G$ is not edge-$\D$-colorable, $P_{x_2}(i,j,\varphi _4)=P_y(i,j,\varphi _4)$ which contains neither $z_k$ nor $y_k$.

Let $\varphi _5=\phiv_4/P_{x_2}(i,j,\varphi _4)$. Then $k\in \bar{\varphi _5}(x_2)$ and $j\in \bar{\varphi _5}(y)\cap \bar{\varphi _5}(y_k)$.

Let $\varphi _6$ be obtained from $\varphi _5$ by recoloring $yy_k$ with $j$. Then $2\in \phibar_6(y)$.

Let $\varphi_7$ be the coloring obtained from $\varphi_6$ by uncoloring $x_1x_2$ and coloring $x_2y$ with color $2$.
Then $1,k\in \bar{\varphi _7}(x_2)$, $2\in \phibar_7(x_1)$, $\varphi _7(x_1z)=1$, $\varphi _7(x_2y)=2$ and $\varphi _7(zz_k)=k$.
Thus $\phiv_7$ is feasible. Since $i,j,l,k\notin R$, the colors in $R$ are unchanged during this sequence of re-colorings, so $C_z(\phiv_7)\supseteq C_z$ and $C_y(\phiv_7)\supseteq C_y$.
Since $\phiv_7(zz_k)=k\in \phibar_7(x)$ and $d(z_k) < q$, we have $k =\phiv_7(zz_k)\in C_z(\phiv_7)$. So, $C_z(\phiv_7) \supseteq C_z\cup \{k\}$. We therefore have $|C_y(\phiv_7)| + |C_z(\phiv_7)| \ge |C_y| + |C_z| +1$, giving a contradiction.
So \textbf{II} holds.
\end{proof}

%%%%%**************

%%%%%%%%%%%%%%%%%%%%%%%%%%%%%%%%%%%%%%%%%##############################

\end{document}